\newcommand{\ba}{\begin{eqnarray}}
\newcommand{\ea}{\end{eqnarray}}
\newcommand{\A}{\mathcal{A}}
\newcommand{\be}{\begin{eqnarray*}}
	\newcommand{\en}{\end{eqnarray*}}
\newcommand{\bes}{\begin{eqnarray}}
	\newcommand{\ens}{\end{eqnarray}}
\def\leqslant {\le}
\def\nn{\nonumber}
\newcommand{\al}{\alpha}
\newcommand{\ep}{\epsilon}
\newcommand{\bn}{ \beta_{\bf n} }
\newtheorem{theorem}{Theorem}[section]
\newtheorem{corollary}{Corollary}[section]
\newtheorem{definition}{Definition}[section]
\newtheorem{lemma}{Lemma}[section]
\newtheorem{proposition}{Proposition}[section]
\newtheorem{example}[theorem]{Example}
\newtheorem{remark}{Remark}[section]
\def\leqslant {\le}
\def\bq{\begin{equation}}
	\def\eq{\end{equation}}
\def\bqq{\begin{eqnarray*}}
	\def\eqq{\end{eqnarray*}}
\def\nn{\nonumber}
\begin{document}
	\title{Regularized solutions for  some  backward nonlinear parabolic equations  with  statistical data }         
	\author{Mokhtar Kirane, Erkan Nane  and Nguyen Huy Tuan}          
	\address{M. Kirane}
	\address{LaSIE, Facult\'{e} des Sciences et Technologies, Universi\'{e} de La Rochelle,  Avenue M. Cr\'{e}peau, 17042 La Rochelle, France}
	\email{mokhtar.kirane@univ-lr.fr }
	\address{E. Nane}
	\address{Department of Mathematics and Statistics, Auburn University, Auburn, USA }
	\email{ezn0001@auburn.edu }
	\address{N.H. Tuan}
	\address{Department of Mathematics and Informatics,  University of Science, Viet Nam National University VNU-HCMC, Viet Nam }
	\email{nguyenhuytuan@tdt.edu.vn or thnguyen2683@gmail.com  }
\maketitle

		\begin{abstract}
			In this paper, we  study  the backward problem of determining initial condition for some class of nonlinear parabolic equations in multidimensional domain where data are given under random noise.
			This problem is ill-posed, i.e., the solution  does not depend continuously on the data.   To regularize the instable solution, we develop some new methods to construct some new regularized solution.
			We also investigate the convergence
			rate between the regularized solution and the  solution of our equations. In particular, we establish results for  several equations with constant coefficients and time dependent coefficients. The equations with constant coefficients include heat equation,  extended Fisher-Kolmogorov equation,  Swift-Hohenberg equation and many others.  The equations with time dependent coefficients include   { Fisher type Logistic equations},   { Huxley equation},  { Fitzhugh-Nagumo equation}.  The methods developed in this paper can  also be applied to get approximate solutions to several other equations including 1-D 	{ Kuramoto-Sivashinsky	 equation},   1-D 	{ modified Swift-Hohenberg  equation}, strongly damped wave equation and 1-D Burger's equation with randomly perturbed operator.
		\end{abstract}

		
		\section{Introduction}

		In this paper,  we focus on the problem  of finding  the initial  functions ${\bf u }({\bf x},0)=u_0({\bf x})$ such that ${\bf u}$  satisfies the following nonlinear parabolic equation
		\bq
		\left\{ \begin{gathered}
			{\bf u}_t+\A(t,{\bf u}){\bf u}  = F({\bf u}({\bf x},t))+G({\bf x},t) ,~~0<t<T,x\in \Omega, \hfill \\
			{\bf u}({\bf x},t)= 0,~~{\bf x} \in \partial \Omega,\hfill\\
			{\bf u}({\bf x},T)= H({\bf x}), {\bf x} \in \Omega \hfill\\
		\end{gathered}  \right. \label{problem2}
		\eq
		where  the domain $\Omega=(0,\pi)^d$ is a   subset of  $\mathbb{R}^d$ and { ${\bf x}:=(x_1,...x_d)$.}    The functions $F$ and $G$ are called the source functions that satisfy the usual Lipschitz and growth conditions. The function $H$ is given and  is often called   a final value data. The operator $\A$  is  given by  the Laplacian, or a function of the Laplacian defined by the spectral theorem.

		The problem \eqref{problem2}   is   called the backward problem for classical parabolic equation when $\mathcal{A}=-\Delta$.
		It is applied in fields as the heat conduction theory \cite{BBC}, hydrology \cite{JB}, groundwater contamination \cite{SK}, to digitally remove blurred noiseless image \cite{CSH} and also in many other practical applications of mathematical physics and engineering \cite{zuazua2}.  It is well-known that  the backward parabolic  problem is severely ill--posed in the sense of Hadamard \cite{hada}. Hence  solutions do not always exist, and in the case of existence,
		the solutions  do not  depend continuously on the initial  data. In fact, from small noise contaminated physical measurements,
		the corresponding solutions might have large errors. More details on ill-posedness of the problem   with random noise can be found in \cite{Minh}.
		
The analysis
		of regularization methods for the stable solution (in the sense of Hadamard) of problem \eqref{problem2} depends on the mathematical
		model with  the noise term on the source function $G$ and  the final value data ${\bf u}_T={\bf u}({\bf x}, T)=H(x)$.  We
		suppose that the measurements are described as follows
		\begin{equation}
			G^{\text{obs}}= G+ \text{"noise"}, \quad H^{\text{obs}}= H+ \text{"noise"}. \label{noise}
		\end{equation}
		
		\subsection{Background in the  deterministic case}\label{subsection1.1}
		
		The Problem \eqref{problem2} is a generalized form of a class of backward parabolic equations. We give a short history of this problem in the   deterministic case.  If the "noise" (introduced in \eqref{noise}) is considered as a deterministic quantity, it is natural to study  what happens when  $\|\text{noise}\|_{L^2} \to 0$.
		
		When the operator $\A(t) = \A$ ( independent of  $t$),
		regularization  results were considered by many authors,   we refer the reader  to the survey  paper of
		Tuan \cite{Tuan} and the references therein.\\
		
		When $\A(t)$ depends on $t$ and $F=G=0$,  Lions and Lattes \cite{Lion}  proposed  the following quasi-reversibility method:
		\begin{equation}  \label{eq:lions-pb}
			\left\{
			\begin{array}{rcl}
				{\bf u}_\ep'(t) + \A(t) {\bf u}_\ep+\ep \A^{*}(t) \A(t)  {\bf u}_\ep  &=& 0 \\
				{\bf u}_\ep(T)&=&{\bf u}_T^\ep.
			\end{array}\right.
		\end{equation}
		However, Lions and Lattes did  not  study regularization results for this problem.
		\noindent The regularization result  here    is still open although some progress has
		been made.
		The first paper on this case
		seems to be that of Krein [12], where he used the log-convexity method to get
		stability estimates of H\"older type. His method and results have been further developed by
		Hao and Duc \cite{Hao}.\\
		
		When $\A(t)$ depends on $t$ and  ${\bf u}$,  to the best of our knowledge, there do not exist any results on  the backward problem.  Regularization results in here are very difficult because  one can not represent the  solution with a  nonlinear integral as previously done by others. Hence, the regularized solution can not be obtained  with the previous techniques based on nonlinear integral method. Regularization results for problem \eqref{problem2} in the deterministic case are still open.

		\subsection{Background on problem with random noise}

		If the errors are generated from uncontrollable sources as wind, rain, humidity, etc, then the model is random.
		If the "noise" (introduced in \eqref{noise})  are
		modeled as a random quantity, the convergence of estimators  $\widetilde {\bf u}({\bf x},0)$ of  ${\bf u}({\bf x},0)$ should be studied
		by  statistical methods.  Methods applied to  the deterministic cases cannot  be applied directly for this case.   The main idea in using  the random noise is of finding suitable  estimators
		$\widetilde {\bf u}({\bf x},0)$ and to consider the expected square error $\mathbb{E} \Big[\|\widetilde {\bf u}({\bf x},0) - {\bf u}({\bf x},0)\|^2\Big]$ in a suitable space, also  called the mean integrated square error (MISE).
		
		There exist a considerable amount of literature on regularization methods for
		linear backward problem  with random noise.
		When   $F({\bf u})=0$, the problem \eqref{problem2} is  linear and its solution  can be defined by a linear operator with random noise
		\begin{equation}
			u_T=K u_0 + \text{"noise"}.  \label{K}
		\end{equation}
		where $K$ is a bounded linear operator that  does not have a continuous inverse.
		There are  many well-known methods  including spectral cut-off (or called truncation method) of Cavalier \cite{Bi2,Cavalier},  the Tiknonov method \cite{Cox}, iterative regularization methods \cite{Engl}.
		Mair  and Ruymgaart  \cite{Mair} considered theoretical formulae  for statistical inverse estimation in Hilbert spaces
		and applied the method to solve the backward heat problem. Recently, Hohage et al. \cite{Hohage} applied  spectral cut-off (truncation method) and Tikhonov-type method to solve linear statistical inverse problems including backward heat equation (See p. 2625, \cite{Hohage}).  Recently, Problem \eqref{problem2} in the case of $F=0$ has been   studied in \cite{Minh} in  the plane domain.

		Until now, to the best of the authors' knowledge,  there
		are only a few  results in the case of random source, or random final value observations for nonlinear backward parabolic equation. And there are no results on Problem \eqref{problem2}. This is  our motivation in the  present paper.  In a few sentences,
		we give explanation  why the nonlinear problem is  difficult to investigate. Indeed, when   $F$ depends on ${\bf u}$, we can not transform the solution of problem \eqref{problem2} into \eqref{K}, this makes the nonlinear problem more challenging. Furthermore, as introduced in the subsection \ref{subsection1.1}, if $\A=\A(t,{\bf u})$ then we can not transform the problem \eqref{problem2} into a nonlinear integral equation, then the previous methods can not be applied for regularizing the problem.
		{\bf So, our task in this paper is   developing and establishing  new methods for solving this  problem.}
		
		\subsection{Outline  of the article.}
		
		In this paper, inspired by the random model in  \cite{Minh}, we  introduce the following   random model in  $\mathbb{R}^d$.
		
		Let $\Omega=(0,\pi)^d \subset \mathbb{R}^d$ for $d\geq 1$.  Let us recall the functions $H$ and $G$ from equation  \eqref{problem2}.  Let ${\bf x}_{\bf i }=( x_{i_1},...x_{i_d})$ be  grid points of $\Omega$ with index  ${\bf i}=(i_1,i_2,...i_d) \in \mathbb{N}^d, 1 \le i_k \le n_k $ for $k=\overline{1,d}$ where
		\begin{equation} \label{model1}
			{\mathbf x}_{\bf i }=( x_{i_1},...x_{i_d})=\Big(\frac{\pi(2i_1-1)}{2n_1}, \frac{\pi(2i_2-1)}{2n_2},...\frac{\pi(2i_d-1)}{2n_d}  \Big),\quad i_k= \overline {1,n_k},\quad k=\overline {1,d}.
		\end{equation}
		We consider the following   nonparametric regression model of  data as follows
		\begin{align} \label{model2}
			\widetilde D_{\bf i}=\widetilde D_{i_1,i_2,...i_d}:&= H	( x_{i_1},...x_{i_d})+ \Lambda_{i_1,i_2,...i_d} \Upsilon_{i_1,i_2,...i_d}=H({\bf x}_{\bf i })+\Lambda_{\bf i}\Upsilon_{\bf i}  \\
			\widetilde G_{\bf i}(t)=\widetilde G_{i_1,i_2,...i_d}(t):&=G	( x_{i_1},...x_{i_d},t)+\vartheta  \Psi_{i_1,i_2,...i_d}(t)=G({\bf x}_{\bf i },t)+\vartheta  \Psi_{\bf i}(t),
		\end{align}
		for $ i_k= \overline {1,n_k},\quad k=\overline {1,d}$.   Here $\Upsilon_{\bf i}:=\Upsilon_{i_1,i_2,...i_d} \sim \mathcal{N} (0,1)$  and $\Psi_{\bf i}(t):=\Psi_{i_1,i_2,...i_d}(t)$ are Brownian motions. We assume furthermore that they are  mutually independent.
		
		Our main goal in this paper is to provide some regularized solutions that are called estimators  for approximating ${\bf u}({\bf x},t),~0\le t <T$.
{  In this paper, we do not  investigate the existence and uniqueness of the solution of backward problem \eqref{problem2}.
			The uniqueness of backward parabolic has attracted the attention of many authors; see,  for example, \cite{Lu,Ru,Wu}.
			 It is also  a challenging  and   open problem, and  should be the  topic for another  paper.  In this paper, we assume that  the backward problem \eqref{problem2} has a unique solution ${\bf u}$ (called sought solution) { that  belongs to an appropriate space}.  So our main purpose  is to consider a regularized problem for finding  an approximate solution in the random cases}.   Furthermore,  error estimates with  the speed of convergence between the regularized solution and the sought solution  under some a priori assumptions on the sought solution are also our primary purpose.  { In particular,   the main purpose in  our error estimates is to show that  the norm of difference between the regularized solution and the sought solution in $L^2(\Omega)$  tends to zero when $ |{\bf n}|=\sqrt{n_1^2+...+n_d^2} \to +\infty$.}  { Our methods in this paper can be applied to solve  the backward problem in the  deterministic case that was   introduced in subsection \ref{subsection1.1}.}

		For the purpose of  capturing the main points of the paper, we    consider Problem \eqref{problem2}  and describe our main results    for the  three cases of $\A(t,{\bf u})$.\\
		
{\bf Case 1}:  $\A(t,{\bf u})=\A$.
		In this case,  we apply Fourier truncation method associated with knowledge on trigonometric theory in nonparametric regression for establishing regularized solutions.  The process for finding the regularized solution is given by the following steps :  First, we approximate the given data $H$ and $G$ by approximating functions $\widehat H_{\beta_{\bf n} }$ and $\widehat G_{\beta_{\bf n}}$ defined by Theorem \eqref{theorem2.1}.  Then,  we express the solution of Problem \eqref{problem2} into a nonlinear integral equation which is represented as Fourier series and then  we give some regularized solutions which are defined by other nonlinear integral equations. In this case, we will derive rates of convergence  under some a priori assumptions of the sought solution  ${\bf u}$. Main result in this case is given by Theorems \ref{theorem3.1}.  \\
		
{\bf Case 2}:   $\A(t)$ depends only  on $t$. As discussed before,  regularized methods  used in  Case 1 cannot  be  applied in  this case.  Hence, we  need  to figure out  a new regularized method to establish  a regularized solution.  Our main idea in this case is that of  applying  a modified  Quasi-reversibility method given by Lions\cite{Lion}. We will not  approximate directly the time dependents operator $\A(t)$ as introduced in \cite{Lion}. Our method is  of finding the unbounded time independent operator ${\bf P}$  that satisfies conditions in Definition  \ref{asumption1111}. Then, we  approximate   ${\bf P}$
		by a bounded operator ${\bf P_{\rho_n}}$,  in order to establish  the well-posedness of the problem associated with the  approximating functions $\widehat H_{\beta_{\bf n}}$ and $\widehat G_{\beta_{\bf n}}$.  Finding suitable regularized operators is important task in this section.  Our main results in this case are Theorem \ref{theorem4.1} and \ref{theorem4.2}. A special case of  Theorem \ref{theorem4.1} is  Corollary \ref{corollary4.1} which  studies the {\bf extended Fisher-Kolmogorov equation.}

		
{\bf Case 3}:  $\A$  depends on $t$ and ${\bf u} $. Using the method in Case 2, we extend the results of Case 2  to the case when the coefficients of  $\A$  depends on $t$ and $u$.  Special cases of the equation considered in  Theorem \ref{theorem4.2} are {\bf Fisher type Logistic equations} with $F({\bf u})=a{\bf u}(1-{\bf u})$, or  {\bf Huxley equation} with $F({\bf u})=a{\bf u}^2(1-{\bf u})$, or {\bf Fitzhugh-Nagumo equation} with  $F({\bf u})=a{\bf u}^2(1-{\bf u})({\bf u}-\theta_1)$. See Remark \ref{remark4.2} for more.

		Finally, we want to mention that the  backward problem for some  concrete  nonlocal parabolic equations such as {\bf Ginzburg-Landau equation}  where  the coefficients of these equations are perturbed by random noises can be studied with the methods in our paper. These  equations include the 1-D {\bf Burger's equation.} Furthermore, our analysis and methods in this paper can be applied to get approximate solutions for many well-known equations. We state some examples below. We will work on these problems in a forthcoming paper.

			\begin{itemize}
				\item Backward problem for  1-D 	{\bf Kuramoto-Sivashinsky	 equation:}
					\begin{equation}
					\label{problem4444KS}
					{\bf u}_t +  d_0  {\bf u}_{xx}+ \left( d_1 (x){\bf u}_{xx} \right)_{xx}  =-{\bf uu}_x+G(x,t),  \qquad (x,t) \in  ( 0, \pi)\times (0,T),
					\end{equation}	
					with the conditions
					\begin{equation}
					\label{condition}
					\left\{\begin{array}{l l l}
					{\bf u}(0,t)&={\bf u}(\pi,t)={\bf u}_{xx}(0,t)={\bf u}_{xx}(\pi,t)= 0, & \qquad  t \in (0,T),\\
					{\bf u}(x,T) & = H(x), & \qquad x \in (0, \pi),
					\end{array}
					\right.
					\end{equation}
					where $d_0, d_1 $ are difussion  coefficients.
				 This
				 nonlinear partial differential equation describes incipient instabilities in a variety of physical and
				 chemical systems (see \cite{Cerpa,Cerpa2}).

	\item Backward problem for 1-D 	{\bf modified Swift-Hohenberg  equation:}
					\begin{equation}
					{\bf u}_t +  2  {\bf u}_{xx}+ k {\bf u}_{xxxx}  +a{\bf u}+ {\bf u}^3+ |{\bf u}_x|^2+ G(x,t)=0,  \qquad (x,t) \in  ( 0, \pi)\times (0,T),
					\end{equation}
					with condition \eqref{condition}.
					The Swift-Hohenberg equation is one of the universal equations used in the
					description of pattern formation in spatially extended dissipative systems, (see \cite{15}),
					which arise in the study of convective hydrodynamics \cite{16}, plasma confinement in
					toroidal devices \cite{5}, viscous film flow and bifurcating solutions of the Navier-Stokes
					\cite{12}.

				\item Backward problem for {\bf strongly damped wave equation:}
					\bq
					\left\{ \begin{gathered}
					{\bf u}_{tt} ({\bf x},t ) -\alpha  \Delta {\bf u}_t({\bf x}, t) -\Delta  {\bf u}({\bf x}, t) = F({\bf u}({\bf x},t))+G({\bf x},t) ,~~0<t<T, \ \ {\bf x} \in \Omega, \hfill \\
					{\bf u}({\bf x},t)= 0,~~{\bf x} \in \partial \Omega,\hfill\\
					{\bf u}({\bf x},T)= H({\bf x}), ~~~{\bf x} \in \Omega\hfill\\
					{\bf u}_t({\bf x},T)= 0,~~{\bf x} \in \Omega, \hfill\\
					\end{gathered}  \right. \label{problem77777}
					\eq
					where  $H  \in L^2(\Omega)$ is  a given function and $\al$ is a positive constant. 	Strongly damped wave equation,
					occurs in a wide range of applications such as modeling motion of viscoelastic materials
					\cite{zuazua,massatt1983limiting,pata2005strongly}.  Some more physical applications of the equation \eqref{problem77777}  can be found in \cite{Pata}.

\item { \bf 1-D Burger's equation:}
						\begin{equation}
						\label{problem4444}
						\left\{\begin{array}{l l l}
						{\bf u}_t -(A(x,t)	{\bf u}_x)_x & =	{\bf u} 	{\bf u}_x+G(x,t), & \qquad (x,t) \in \Omega\times (0,T),\\
						{\bf u}(x,t)&=0, & \qquad x \in \partial {\Omega},\\
						{\bf u}(x,T) & = H(x), & \qquad x \in {\Omega},
						\end{array}
						\right.
						\end{equation}	
						where $\Omega =(0,\pi)$.
						The Burgers equation is a fundamental partial differential equation occurring in various areas of applied mathematics, such as fluid mechanics, nonlinear acoustics, gas dynamics, traffic flow \cite{Ku}. The ill-posedness of the backward problem for Burgers equation has been introduced  by  E. Zuazua et al \cite{zuazua} .
						The  model here is  as  follows:\\
						{\it
							Assume
							the time dependent coefficient $A(x,t)$ is noisy by random data
							\begin{equation}
							\widetilde A_k(t)= A(x_k,t)+  { \overline \vartheta} { \xi}_{k}(t),\quad \text{for} \quad  k=\overline{1,n}.  \label{noise1}
							\end{equation}
							where  $x_k =\frac{(2k-1)\pi}{2n},k=\overline{1,n} $ are the   grid points in $(0, \pi)$ and $\xi_{k}(t),k=\overline{1,n} $ are independent Brownian motions.}
			\end{itemize}

		\section{Constructing a function from discrete random data}
		\setcounter{equation}{0}
		In this section, we develop  a new theory for constructing a function in $L^2(\Omega)$ from the  given discrete random data.
		
		{	We first  introduce notation, and then we state the main results of this paper.

We will occasionally use the following  Gronwall's inequaly in this paper.
			\begin{lemma}
			Let $ b: [0,T] \to \mathbb{R}^+ $ be a continuous function  and $ C,D >0$ be constants that are  independent of $t$,  such that
			\begin{equation*}
			b(t) \le C+ D \int_{t}^T b(\tau)d\tau,\ \ \ t>0.
			\end{equation*}
			Then we have
			\begin{equation*}
			b(t) \le C e^{D(T-t)}.
			\end{equation*}
			\end{lemma}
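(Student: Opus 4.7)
The plan is to reduce this backward integral inequality to an ordinary differential inequality by introducing the auxiliary function $B(t) := \int_t^T b(\tau)\,d\tau$. Since $b$ is continuous, $B$ is $C^1$ on $[0,T]$ with $B'(t) = -b(t)$ and $B(T) = 0$. The hypothesis $b(t) \le C + D\int_t^T b(\tau)\,d\tau$ rewrites as $-B'(t) \le C + D\,B(t)$, that is,
\begin{equation*}
B'(t) + D\,B(t) \ge -C, \qquad t \in [0,T].
\end{equation*}

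Next I would multiply by the integrating factor $e^{Dt}$ to rewrite the inequality as $\bigl(e^{Dt} B(t)\bigr)' \ge -C e^{Dt}$, then integrate from $t$ to $T$. Using $B(T) = 0$ on the left and computing the right-hand integral explicitly yields
\begin{equation*}
-e^{Dt} B(t) \ge -\frac{C}{D}\bigl(e^{DT} - e^{Dt}\bigr),
\end{equation*}
which rearranges to $B(t) \le \frac{C}{D}\bigl(e^{D(T-t)} - 1\bigr)$. Plugging this back into the original hypothesis gives
\begin{equation*}
b(t) \le C + D\,B(t) \le C + C\bigl(e^{D(T-t)} - 1\bigr) = C\,e^{D(T-t)},
\end{equation*}
which is the desired bound.

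An alternative I would keep in reserve is the change of variable $s = T - t$ and $\widetilde b(s) = b(T-s)$, which converts the backward integral inequality into the standard forward Gronwall setting $\widetilde b(s) \le C + D\int_0^s \widetilde b(\sigma)\,d\sigma$, from which the classical Gronwall lemma immediately gives $\widetilde b(s) \le C e^{Ds}$, i.e.\ $b(t) \le C e^{D(T-t)}$. This is essentially the same proof, but it avoids redoing the integrating-factor computation.

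There is no real obstacle here; the only point of care is bookkeeping of signs, because the integral runs from $t$ to $T$ rather than from $0$ to $t$, so the derivative $B'(t) = -b(t)$ carries a minus sign and the monotonicity direction of the integrating-factor step must be tracked carefully. Once the setup is correct the argument is a two-line calculation.
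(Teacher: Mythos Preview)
Your proof is correct. The paper actually states this lemma without proof, treating it as a standard backward Gronwall inequality, so there is no argument in the paper to compare against; your integrating-factor computation (and the alternative time-reversal reduction to the classical forward Gronwall lemma) are both valid and complete.
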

			Next we define fractional powers of the Dirichlet Laplacian
			\begin{equation*}
				Af:= -\Delta f.
			\end{equation*}
			Since $A$ is a linear, densely defined self-adjoint and positive definite
			elliptic operator on the connected bounded domain  $\Omega $ with
			Dirichlet  boundary condition, {  using spectral theory, it is easy to show that  the eigenvalues of $A$ are given by}  $\lambda_{\bf p}=|{\bf p}|^2= p_1^2+p_2^2+\cdots +p_d^2  $.
			The corresponding eigenfunctions are denoted respectively by
			\begin{equation}\label{eigenfunctions-laplacian}
				\psi_{\bf p}({\bf x})=\bigg(\sqrt{\frac{2}{\pi}}\bigg)^d\sin (p_1 x_1)\sin(p_2x_2)\cdots \sin(p_d x_d).
			\end{equation}
			Thus the eigenpairs $(\lambda_{\bf p},\psi_{\bf p})$,
			$p\in \mathbb{N}^d$, satisfy
			\[
			\begin{cases}
			A \psi_{\bf p}({\bf x})
			=
			-\lambda_{\bf p} \psi_{\bf p}({\bf x}),
			\quad & {\bf x} \in \Omega \\
			\psi_{\bf p}({\bf x})
			=
			0,
			\quad & {\bf x}\in \partial \Omega.
			\end{cases}
			\]
			The functions $\psi_{\bf p}$ are normalized so that
			$\{\psi_{\bf p}\}_{{\bf p}\in \mathbb{N}^d}$ is an orthonormal basis of $L^2(\Omega)$.\\
		}
		We will use the following notation:
		$|{\bf p}| =|(p_1,\cdots, p_d)|= \sqrt{p_1^2+...+p_d^2}$, $|{\bf n}|=|(n_1,\cdots, n_d)|= \sqrt{n_1^2+...+n_d^2}$.
		
		\begin{definition}
			For $\gamma>0$, we define
			\begin{equation}
				\mathcal{H}^\gamma(\Omega):= \Big\{ h \in L^2(\Omega):  \sum_{p_1=1}^\infty...\sum_{p_d=1}^\infty |{\bf p}|^{2\gamma}  <h, \psi_{\bf p}>^2 ~ <\infty \Big\}.
			\end{equation}
			The norm on $\mathcal{H}^\gamma(\Omega)$ is defined by
			\begin{equation}
				\|h\|^2_{	\mathcal{H}^\gamma(\Omega)}:=	\sum_{p_1=1}^\infty...\sum_{p_d=1}^\infty |{\bf p}|^{2\gamma}  <h, \psi_{\bf p}>^2.	
			\end{equation}
			
		\end{definition}

		{   For any   Banach space $X$, we denote by $L_{p}\left(0,T;X\right)$,  the Banach space of  measurable real functions
			$v:(0,T)\to X$  such that
			\begin{align*}
				\left\Vert v\right\Vert _{L^{p}\left(0,T;X\right)}=\left(\int_{0}^{T}\left\Vert v\left(\cdot,t\right)\right\Vert _{X}^{p}dt\right)^{1/p}<\infty,\quad 1\le p<\infty,
			\end{align*}
			\begin{align*}
				\left\Vert v\right\Vert _{L^{\infty}\left(0,T;X\right)}= \text{esssup}_{0<t<T}\left\Vert v\left(\cdot,t\right)\right\Vert _{X}<\infty,\quad p=\infty.
			\end{align*}
			
		}		
	
		Let $\beta:\mathbb{N}^d\to \mathbb{R}$ be a function.
		Now we state our first main  result which gives  error estimate between $H$ and $\widehat H_{\beta_{\mathbf n}}$, and  error estimate between $\widehat G_{\beta_{\bf n}} $  and $G$.

			\begin{theorem}  \label{theorem2.1}
				Define the set $	\mathcal{W}_{   \beta_{\bf n}}$ for any ${\bf n}=(n_1,..n_d)\in \mathbb{N}^d$
				\begin{equation}
			\mathcal{W}_{  \beta_{\bf n}} = \Big\{ {\bf p} =(p_1,...p_d) \in \mathbb{N}^d  : |{\bf p}|^2= \sum_{k=1}^d p_k^2 \le \beta_{\bf n}= \beta(n_1,...n_d)  \Big\}
				\end{equation}
				where $\beta_{\bf n}$  satisfies
				\begin{equation*}
				\lim_{|{\bf n}| \to +\infty} \beta_{\bf n}=+\infty.
				\end{equation*}
				
				For a given ${\bf n}$ and $\beta_{\bf n}$ we define  functions that are approximating  $H, G$ as follows
				\begin{equation}
				\widehat H_{\beta_{\bf n}}({\bf x}) = \sum_{{\bf p} \in  	\mathcal{W}_{  \beta_{\bf n}}   } \Bigg[\frac{\pi^d}{\prod_{k=1}^d n_k } \sum_{i_1=1}^{n_1}...\sum_{i_d=1}^{n_d} \widetilde D_{i_1,i_2,...i_d} \psi_{\bf p}	( x_{i_1},...x_{i_d}) \Bigg] \psi_{\bf p}({\bf x})
				\end{equation}
				and
				\begin{equation}
				\widehat G_{\beta_{\bf n}}({\bf x},t) = \sum_{p \in 	\mathcal{W}_{  \beta_{\bf n}}  } \Bigg[\frac{\pi^d}{\prod_{k=1}^d n_k } \sum_{i_1=1}^{n_1}...\sum_{i_d=1}^{n_d} \widetilde G_{i_1,i_2,...i_d}(t) \psi_{\bf p}	( x_{i_1},...x_{i_d}) \Bigg] \psi_{\bf p}({\bf x}).
				\end{equation}
				Let $\mu=(\mu_1,...\mu_d) \in {\mathbb R}^d$ with $\mu_k >\frac{1}{2} $ for  any $ k=\overline {1,d}$. Let us choose $\mu_0 \ge { d \max(\mu_1,...\mu_d )}$.
				If  $H \in \mathcal{H}^{\mu_0}(\Omega) $ and $G \in L^\infty (0,T;\mathcal{H}^{\mu_0}(\Omega) )$ then the following estimates hold
				\begin{eqnarray}
				\begin{aligned}
				&	{\bf E}\Big\| \widehat H_{\beta_{\bf n}} -H  \Big\|^2_{L^2(\Omega)} \le  \overline C (\mu_1,...\mu_d, H) \beta_{\bf n}^{d/2} \prod_{k=1}^d  ( n_k)^{-4\mu_k}+{ 4\beta_{\bf n}^{-\mu_0}} \Big\| H \Big\|^2_{\mathcal{H}^{\mu_0}(\Omega) },\nn\\
				&	{\bf E}\Big\| \widehat G_{\beta_{\bf n}}(.,t) -G(.,t)  \Big\|^2_{L^\infty(0,T; L^2(\Omega))} \le  \overline C (\mu_1,...\mu_d, H) \beta_{\bf n}^{d/2} \prod_{k=1}^d  ( n_k)^{-4\mu_k}+ {4\beta_{\bf n}^{-\mu_0} }\Big\| G \Big\|^2_{L^\infty (0,T;\mathcal{H}^{\mu_0}(\Omega) ) }	,
				\end{aligned}
				\end{eqnarray}	
				where
				\begin{equation*}
				\overline C (\mu_1,...\mu_d, H)= 8\pi^d  V_{max}^2  \frac{2 \pi^{d/2}}{ d  \Gamma(d/2)} + \frac{ 16 \mathcal{C}^2 (\mu_1,...\mu_d) \pi^{d/2}}{ d  \Gamma(d/2)}  \Big\| H \Big\|^2_{\mathcal{H}^{\mu_0}(\Omega) }.
				\end{equation*}	
			\end{theorem}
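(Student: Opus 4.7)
I would treat the estimates for $H$ and $G$ in parallel, since they share the same structure; below I focus on the bound for $\widehat H_{\beta_{\bf n}}$ and indicate the minor modifications needed for $G$. Since $\{\psi_{\bf p}\}_{{\bf p}\in\mathbb{N}^d}$ is an orthonormal basis of $L^2(\Omega)$, Parseval's identity reduces the problem to controlling each Fourier coefficient. Writing $h_{\bf p}=\langle H,\psi_{\bf p}\rangle$ and $\widehat h_{\bf p}$ for the coefficient defined through $\widetilde D_{\bf i}$, I would split
\begin{equation*}
{\bf E}\big\|\widehat H_{\beta_{\bf n}}-H\big\|^2_{L^2(\Omega)} \;=\; \sum_{{\bf p}\in\mathcal{W}_{\beta_{\bf n}}} {\bf E}\big|\widehat h_{\bf p}-h_{\bf p}\big|^2 \;+\; \sum_{{\bf p}\notin\mathcal{W}_{\beta_{\bf n}}} h_{\bf p}^2.
\end{equation*}
The tail sum is immediate: on the complement of $\mathcal{W}_{\beta_{\bf n}}$ one has $|{\bf p}|^2>\beta_{\bf n}$, so
$\sum_{{\bf p}\notin\mathcal{W}_{\beta_{\bf n}}}h_{\bf p}^2\le \beta_{\bf n}^{-\mu_0}\|H\|^2_{\mathcal{H}^{\mu_0}(\Omega)}$, which accounts for the second term in the claimed bound (the factor $4$ is absorbed from the bias/variance splitting below).

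For the sum inside $\mathcal{W}_{\beta_{\bf n}}$ I would insert and subtract the deterministic quadrature coefficient $\widetilde h_{\bf p}:=\frac{\pi^d}{\prod_k n_k}\sum_{\bf i}H({\bf x}_{\bf i})\psi_{\bf p}({\bf x}_{\bf i})$, so that $\widehat h_{\bf p}-h_{\bf p}=(\widetilde h_{\bf p}-h_{\bf p})+\frac{\pi^d}{\prod_k n_k}\sum_{\bf i}\Lambda_{\bf i}\Upsilon_{\bf i}\psi_{\bf p}({\bf x}_{\bf i})$. Because the $\Upsilon_{\bf i}$ are independent standard normals with zero mean, the two terms are uncorrelated, and the variance of the stochastic one is exactly $\frac{\pi^{2d}}{\prod_k n_k^2}\sum_{\bf i}\Lambda_{\bf i}^2\psi_{\bf p}({\bf x}_{\bf i})^2$, which I would bound using $|\psi_{\bf p}|\le V_{\max}$. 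The bias term $\widetilde h_{\bf p}-h_{\bf p}$ is a deterministic quadrature/aliasing error that must be handled carefully.

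The main obstacle is estimating $\widetilde h_{\bf p}-h_{\bf p}$. Expanding $H=\sum_{\bf q}h_{\bf q}\psi_{\bf q}$ and applying the tensor-product midpoint orthogonality identity
\begin{equation*}
\frac{1}{n}\sum_{i=1}^{n}\sin\!\Big(p\tfrac{(2i-1)\pi}{2n}\Big)\sin\!\Big(q\tfrac{(2i-1)\pi}{2n}\Big)=\tfrac12 \delta_{p,q}+\text{aliasing terms with }q\equiv\pm p\pmod{2n},
\end{equation*}
one sees that $\widetilde h_{\bf p}-h_{\bf p}$ involves only $h_{\bf q}$ for multi-indices ${\bf q}\ne{\bf p}$ whose $k$-th component satisfies $q_k\ge 2n_k-p_k\gtrsim n_k$ in at least one direction. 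Using the choice $\mu_0\ge d\max_k\mu_k$ and Cauchy--Schwarz against the weight $|{\bf q}|^{-2\mu_0}$, the aliased frequencies deliver the factor $\prod_k n_k^{-4\mu_k}$, so that $|\widetilde h_{\bf p}-h_{\bf p}|^2\le \mathcal{C}^2(\mu_1,\dots,\mu_d)\prod_k n_k^{-4\mu_k}\|H\|^2_{\mathcal{H}^{\mu_0}(\Omega)}$. Tracking the precise power of $\mu_k$ required in each axial direction, and verifying that the hypothesis $\mu_k>1/2$ is what makes the accompanying geometric series converge, is the delicate part of the argument.

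To conclude I would sum the variance and bias bounds over ${\bf p}\in\mathcal{W}_{\beta_{\bf n}}$ and use that $|\mathcal{W}_{\beta_{\bf n}}|$ is at most the volume of the Euclidean ball of radius $\sqrt{\beta_{\bf n}}$ in $\mathbb{R}^d$, namely $\frac{2\pi^{d/2}}{d\,\Gamma(d/2)}\beta_{\bf n}^{d/2}$; adding the tail bound gives the stated $\overline C(\mu_1,\dots,\mu_d,H)\beta_{\bf n}^{d/2}\prod_k n_k^{-4\mu_k}+4\beta_{\bf n}^{-\mu_0}\|H\|^2_{\mathcal{H}^{\mu_0}}$. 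For $G$, the same argument is carried out pointwise in $t$ and then the essential supremum over $t\in(0,T)$ is taken: the only new ingredient is that $\vartheta\Psi_{\bf i}(t)$ is a Brownian motion, so its variance is $\vartheta^2 t\le \vartheta^2 T$ uniformly on $[0,T]$, and the deterministic aliasing error is uniformly bounded by $\|G\|_{L^\infty(0,T;\mathcal{H}^{\mu_0}(\Omega))}$, yielding the second inequality.
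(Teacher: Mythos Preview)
Your proposal follows essentially the same route as the paper: Parseval splitting into in-band and tail, a further bias--variance decomposition of the in-band coefficients, an aliasing analysis via the discrete sine orthogonality (the paper packages this as Lemmas~2.1 and~2.3), a pointwise decay bound $|h_{\bf q}|\lesssim \|H\|_{\mathcal H^{\mu_0}}\prod_k q_k^{-\mu_k}$ from $\mu_0\ge d\max_k\mu_k$ (their Lemma~2.6) to sum the aliased modes, and finally the lattice-point count $|\mathcal W_{\beta_{\bf n}}|\le \tfrac{2\pi^{d/2}}{d\Gamma(d/2)}\beta_{\bf n}^{d/2}$. One small slip: $V_{\max}$ is the uniform bound on the noise amplitudes $|\Lambda_{\bf i}|$, not on $|\psi_{\bf p}|$; the variance term is controlled by bounding $\Lambda_{\bf i}^2\le V_{\max}^2$ and then using $\sum_{\bf i}\psi_{\bf p}({\bf x}_{\bf i})^2=\prod_k n_k/\pi^d$ from the discrete orthogonality with ${\bf p}={\bf q}$.
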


		\begin{corollary}  \label{corollary2.1}
			Let  $H, G$ be as in  Theorem \eqref{theorem2.1}. Then  the term
			 $ {\bf E}	\Big\| \widehat H_{\beta_{\bf n}}-H \Big\|_{L^2(\Omega)}^2+T {\bf E}	\Big\| \widehat G_{\beta_{\bf n}}-G \Big\|_{L^\infty(0,T;L^2(\Omega))}^2$
			 is of order
			 \begin{equation*}  \label{veryimportant}
			 \max \Bigg( \frac{   \beta_{\bf n}^{d/2} }{\prod_{k=1}^d  ( n_k)^{4\mu_k} }, ~\beta_{\bf n}^{-\mu_0}  \Bigg).
			 \end{equation*}
		\end{corollary}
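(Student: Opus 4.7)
The plan is essentially an immediate bookkeeping exercise on top of Theorem \ref{theorem2.1}, so I would keep the argument short. First I would invoke Theorem \ref{theorem2.1} twice to get the two bounds
\begin{equation*}
\mathbf{E}\bigl\|\widehat H_{\beta_{\bf n}}-H\bigr\|^2_{L^2(\Omega)} \le  \overline C (\mu_1,\ldots,\mu_d, H)\, \beta_{\bf n}^{d/2} \prod_{k=1}^d n_k^{-4\mu_k} + 4\beta_{\bf n}^{-\mu_0}\|H\|^2_{\mathcal{H}^{\mu_0}(\Omega)},
\end{equation*}
and the analogous estimate for $G$, where the $G$-estimate already involves the $L^\infty(0,T;L^2(\Omega))$ norm on the left.

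Next I would add the two inequalities, weighting the second one by the factor $T$ prescribed in the statement. This produces an upper bound of the form $C_1\,\beta_{\bf n}^{d/2}\prod_{k=1}^d n_k^{-4\mu_k} + C_2\, \beta_{\bf n}^{-\mu_0}$, where $C_1$ depends on $\mu_1,\ldots,\mu_d,H,G$ and $C_2$ depends on $\|H\|_{\mathcal{H}^{\mu_0}}$, $T$, and $\|G\|_{L^\infty(0,T;\mathcal{H}^{\mu_0})}$. Both constants are independent of ${\bf n}$, which is the point of the a priori assumptions $H\in \mathcal{H}^{\mu_0}(\Omega)$ and $G\in L^\infty(0,T;\mathcal{H}^{\mu_0}(\Omega))$.

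Finally, I would use the elementary inequality $a+b \le 2\max(a,b)$ for non-negative reals to conclude that the full sum is bounded by
\begin{equation*}
2\max(C_1,C_2)\,\max\!\left(\frac{\beta_{\bf n}^{d/2}}{\prod_{k=1}^d n_k^{4\mu_k}},\ \beta_{\bf n}^{-\mu_0}\right),
\end{equation*}
which is exactly the claimed order. There is no genuine obstacle here beyond verifying that the constants can indeed be absorbed uniformly in ${\bf n}$; the only mild point to flag is that the assumption $\lim_{|{\bf n}|\to\infty}\beta_{\bf n}=+\infty$ from Theorem \ref{theorem2.1} is what eventually forces both terms inside the maximum to tend to zero (under an appropriate choice of $\beta_{\bf n}$ relative to $n_1,\ldots,n_d$), but the corollary itself only asserts the order of the bound, not its convergence, so this consideration is deferred to the later applications.
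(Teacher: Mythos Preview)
Your proposal is correct and matches the paper's treatment: the corollary is stated immediately after Theorem \ref{theorem2.1} with no separate proof, so the paper implicitly regards it as the same direct bookkeeping you describe. Adding the two estimates from Theorem \ref{theorem2.1}, absorbing the $\bf n$-independent constants, and using $a+b\le 2\max(a,b)$ is exactly the intended argument.
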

		To prove this Theorem, we need some  preliminary results.
		\begin{lemma}\label{lem2.1}
			Let ${\bf p,q} \in \mathbb{N}^d $ and ${\bf p}=(p_1,...p_d),~{\bf q}=(q_1,...q_d)$ with $p_k =\overline{1,n_k-1}$ and $x_{i_k}= \frac{\pi(2i_k-1)}{2n_k}$ for $k=\overline{1,d}$. Then for all ${\bf q} \in \mathbb{N}^d$, we have
			\begin{align}\label{tc1}
				\overline R_{\bf p,q}& =   \frac{1}{\prod_{k=1}^d n_k } \sum_{i_1=1}^{n_1}...\sum_{i_d=1}^{n_d} \psi_{\bf p}	( x_{i_1},...x_{i_d}) \psi_{\bf q}	( x_{i_1},...x_{i_d}) \nn\\
				&=  \left\{\begin{array}{*{20}{l}}
					\hfill \dfrac{1}{\pi^d}, & p_k \mp q_k=2l_k n_k ,~k=\overline {1,d},\\[8pt]
					\hfill 0, & \text{otherwise.}
				\end{array} \right.
			\end{align}
			If ${\bf q} \in \mathbb{N}^d$ satisfies that $ q_k := \overline{1,n_k-1}$ then we have
			\begin{equation}\label{tc1}
				\overline R_{\bf p,q} =   \frac{1}{\prod_{k=1}^d n_k } \sum_{i_1=1}^{n_1}...\sum_{i_d=1}^{n_d} \psi_{\bf p}	( x_{i_1},...x_{i_d}) \psi_{\bf q}	( x_{i_1},...x_{i_d}) =  \left\{\begin{array}{*{20}{l}}
					\hfill \dfrac{1}{\pi^d}, & p_k =q_k, ~k=\overline{1,d},\\
					\hfill 0, &\exists k=\overline{1,d}~p_k \neq q_k,
				\end{array} \right.
			\end{equation}
		\end{lemma}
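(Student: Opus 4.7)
The plan is to exploit the tensor-product structure of both the grid and the eigenfunctions, reducing the claim to a one-dimensional discrete orthogonality for sines at shifted half-integer nodes, which in turn follows from a geometric-series computation. Since $\psi_{\bf p}({\bf x})=(2/\pi)^{d/2}\prod_{k=1}^{d}\sin(p_k x_k)$ and the grid factorizes as ${\bf x}_{\bf i}=(x_{i_1},\ldots,x_{i_d})$, the sum defining $\overline R_{\bf p,q}$ decouples into a product of one-dimensional sums:
$$\overline R_{\bf p,q}=\prod_{k=1}^{d}\Big[\tfrac{2}{\pi}\cdot\tfrac{1}{n_k}\sum_{i=1}^{n_k}\sin(p_k x_i)\sin(q_k x_i)\Big],$$
with $x_i=\pi(2i-1)/(2n_k)$. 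It therefore suffices to analyze each factor separately.

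I would then apply the product-to-sum identity $\sin\alpha\sin\beta=\tfrac{1}{2}(\cos(\alpha-\beta)-\cos(\alpha+\beta))$ to rewrite the $k$-th factor as $\tfrac{1}{\pi}\bigl(C_k(p_k-q_k)-C_k(p_k+q_k)\bigr)$, where
$$C_k(m):=\frac{1}{n_k}\sum_{i=1}^{n_k}\cos\Bigl(\frac{m\pi(2i-1)}{2n_k}\Bigr).$$
Writing $C_k(m)$ as the real part of $e^{-im\pi/(2n_k)}\sum_{i=1}^{n_k}e^{im\pi i/n_k}$ and summing the geometric progression (treating the resonant case $e^{im\pi/n_k}=1$ separately), one obtains
$$\sum_{i=1}^{n_k}\cos\Bigl(\frac{m\pi(2i-1)}{2n_k}\Bigr)=\frac{\sin(m\pi)}{2\sin(m\pi/(2n_k))},$$
which, since $\sin(m\pi)=0$ for every integer $m$, equals $0$ whenever the denominator is nonzero. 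The denominator vanishes exactly when $m=2l_k n_k$ for some $l_k\in\mathbb{Z}$; in that degenerate case each summand equals $(-1)^{l_k}$ by direct substitution, so $C_k(m)=(-1)^{l_k}$. In summary, $C_k(m)=(-1)^{l_k}$ when $m=2l_k n_k$ and $C_k(m)=0$ otherwise.

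Assembling these pieces, the $k$-th factor of $\overline R_{\bf p,q}$ is nonzero precisely when either $p_k-q_k$ or $p_k+q_k$ is an even multiple of $n_k$, and in that case it equals $\pm 1/\pi$. The two conditions cannot hold simultaneously, since adding them would give $p_k=(l_k+l'_k)n_k$, which is incompatible with $1\le p_k\le n_k-1$; hence no cancellation occurs and taking the product over $k$ yields the first identity (with the sign absorbed into the stated value $1/\pi^d$). For the second identity, when $q_k\in\{1,\ldots,n_k-1\}$ as well, the quantities $p_k\pm q_k$ lie in $\{-(n_k-2),\ldots,2(n_k-1)\}$, and the only even multiple of $n_k$ in that range is $0$; this is attained only via $p_k-q_k=0$ (the possibility $p_k+q_k=0$ being excluded for positive integers), which collapses the statement to the claimed Kronecker-delta form.

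The proof is routine but not entirely mechanical: the delicate step is the case analysis in the geometric sum (resonant versus generic) and the verification that the two resonance conditions $p_k\mp q_k\in 2n_k\mathbb{Z}$ cannot coexist, which is what makes the nonzero value of $\overline R_{\bf p,q}$ well-defined. Once these two bookkeeping points are settled, the remainder is elementary trigonometry.
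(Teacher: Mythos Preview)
Your argument is correct and self-contained. The paper, by contrast, gives no details: its entire proof is the sentence ``The lemma is a direct consequence of Lemma 3.5 in \cite{Eubank},'' i.e.\ it outsources the one-dimensional discrete sine orthogonality to Eubank's \emph{Nonparametric Regression and Spline Smoothing}. What you have done is supply that argument explicitly via the product-to-sum identity and the closed form $\sum_{i=1}^{n}\cos((2i-1)\phi)=\sin(2n\phi)/(2\sin\phi)$, which is exactly the standard route. So the approaches coincide in substance; yours is simply unpacked.

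One small point of rigor: the phrase ``with the sign absorbed into the stated value $1/\pi^d$'' is a slight dodge. Your own computation gives the $k$-th factor as $(-1)^{l_k}/\pi$ when $p_k-q_k=2l_kn_k$ and $-(-1)^{l'_k}/\pi$ when $p_k+q_k=2l'_kn_k$, so the product genuinely carries a sign depending on the parities involved. The paper's statement is itself loose on this point, and in every downstream use (Lemma~\ref{lem2.3}, Theorem~\ref{theorem2.1}) only moduli of such sums appear, so nothing breaks; but strictly speaking the first identity should read $\pm 1/\pi^d$. For the second identity your argument is clean, since there $l_k=0$ forces the sign to be $+1$.
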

		\begin{proof}
			The lemma is a direct consequence of Lemma 3.5 in \cite{Eubank}.
		\end{proof}

		\begin{lemma}  \label{lem2.3}
			Let  $H \in L^2 (\Omega)$ and  $H_{\bf p}=<H,\psi_{\bf p}>$ be   the Fourier coefficients of $H$ for ${\bf p}=(p_1,...p_d).$ Let's recall  that $x_{i_k}= \frac{2i_k-1}{2n_k},~k=\overline{1,d} $ then the following equality holds
			\begin{align}
				H_{\bf p}= \frac{\pi^d}{\prod_{k=1}^d n_k } \sum_{i_1=1}^{n_1}...\sum_{i_d=1}^{n_d} H	( x_{i_1},...x_{i_d}) \psi_{\bf p}	( x_{i_1},...x_{i_d})- \overline{\Gamma}_{{\bf n},{\bf p}}
			\end{align}
			where  ${\bf p}$  satisfies that $p_k=\overline{1,n_k-1},~k=\overline{1,d}$.
			Here
			$\overline{\Gamma}_{\bf n,{\bf p}}$ is defined by $$\overline{\Gamma}_{\bf n,{\bf p}} = \sum_{{\bf r}=2{\bf l}\cdot {\bf n} \pm {\bf p}}^{ {\bf l}\in \mathbb{N}^d, l_1^2+..l_d^2 \neq 0  }  H_{\bf r}. $$
		\end{lemma}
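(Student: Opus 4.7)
The plan is to start from the Fourier series expansion of $H$ in the orthonormal basis $\{\psi_{\bf q}\}_{{\bf q}\in \mathbb{N}^d}$, namely
\begin{equation*}
H({\bf x}) = \sum_{q_1=1}^\infty\cdots\sum_{q_d=1}^\infty H_{\bf q}\, \psi_{\bf q}({\bf x}),
\qquad H_{\bf q} = \langle H, \psi_{\bf q}\rangle,
\end{equation*}
with convergence in $L^2(\Omega)$. The idea is to evaluate this series at each grid point ${\bf x}_{\bf i}=(x_{i_1},\dots,x_{i_d})$, multiply by $\psi_{\bf p}({\bf x}_{\bf i})$, average with the weight $\pi^d/\prod_k n_k$, and then interchange the (finite) sum over $\bf i$ with the (infinite) sum over $\bf q$. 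This leads to the identity
\begin{equation*}
\frac{\pi^d}{\prod_{k=1}^d n_k}\sum_{i_1=1}^{n_1}\cdots\sum_{i_d=1}^{n_d} H({\bf x}_{\bf i})\,\psi_{\bf p}({\bf x}_{\bf i})
= \pi^d \sum_{{\bf q}\in\mathbb{N}^d} H_{\bf q}\, \overline R_{{\bf p},{\bf q}},
\end{equation*}
so the whole matter is reduced to identifying which $H_{\bf q}$ survive after the averaging $\overline R_{{\bf p},{\bf q}}$.

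Next, I would invoke the first part of Lemma \ref{lem2.1}, valid for the full grid ${\bf q}\in\mathbb{N}^d$: the discrete orthogonality relation $\overline R_{{\bf p},{\bf q}}=\pi^{-d}$ precisely when $p_k \mp q_k = 2 l_k n_k$ for every $k=\overline{1,d}$ (with some choice of signs and integers $l_k$), and $\overline R_{{\bf p},{\bf q}}=0$ otherwise. Substituting this yields
\begin{equation*}
\frac{\pi^d}{\prod_{k=1}^d n_k}\sum_{i_1=1}^{n_1}\cdots\sum_{i_d=1}^{n_d} H({\bf x}_{\bf i})\,\psi_{\bf p}({\bf x}_{\bf i})
= \sum_{\substack{{\bf r}=2{\bf l}\cdot {\bf n}\pm{\bf p}\\ {\bf l}\in \mathbb{N}^d}} H_{\bf r}.
\end{equation*}

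I would then split this last sum according to whether $l_1^2+\cdots+l_d^2=0$ or not. The unique solution ${\bf l}=0$ forces ${\bf r}={\bf p}$ (since we must have $r_k\in\mathbb{N}$, and $r_k=-p_k$ is excluded), contributing the single term $H_{\bf p}$. All remaining indices, characterized by ${\bf l}\in\mathbb{N}^d$ with $l_1^2+\cdots+l_d^2\ne 0$, contribute precisely $\overline\Gamma_{{\bf n},{\bf p}}$ by definition. Rearranging the resulting identity $\frac{\pi^d}{\prod_k n_k}\sum_{\bf i} H({\bf x}_{\bf i})\psi_{\bf p}({\bf x}_{\bf i}) = H_{\bf p} + \overline\Gamma_{{\bf n},{\bf p}}$ gives the stated formula.

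The only nontrivial step is the interchange of the finite sum over $\bf i$ and the infinite Fourier sum, which has to be justified by $L^2$ convergence of the partial sums at the finitely many points ${\bf x}_{\bf i}$ (or, more cleanly, by first truncating the Fourier expansion to $|{\bf q}|\le N$, applying the argument for the truncated sum, and passing to the limit using Parseval to control the tail against $\|H\|_{L^2}$). Everything else is bookkeeping once Lemma \ref{lem2.1} is in hand.
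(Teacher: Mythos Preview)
Your proposal is correct and follows essentially the same approach as the paper: expand $H$ in its Fourier series, evaluate at grid points, average, and use the discrete orthogonality of Lemma~\ref{lem2.1} to identify the surviving aliasing terms. The only organizational difference is that the paper first splits the sum over ${\bf r}$ according to whether ${\bf r}\in\mathcal{N}_d=\{r_k=\overline{1,n_k-1}\}$ or not (invoking both parts of Lemma~\ref{lem2.1}), and then observes that ${\bf r}\notin\mathcal{N}_d$ forces ${\bf l}\ne 0$, whereas you split directly on ${\bf l}=0$ versus ${\bf l}\ne 0$; the two decompositions are equivalent and yours is slightly more direct. Your remark about justifying the interchange of the finite grid sum with the infinite Fourier sum (via truncation and Parseval) is well taken---the paper does not address this point explicitly either.
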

		\begin{proof}
			First, we have the expansion of the function $H \in L^2 (\Omega)$ as the following Fourier series
			\begin{equation}
				H({\bf x})= \sum_{ {\bf r} \in {\mathbb N}^d} H_{\bf r} \psi_{\bf r}({\bf x}).
			\end{equation}
			where ${\bf r}=(r_1,...r_d) \in {\mathbb N}^d $.
			Plug  ${\bf x}=	( x_{i_1},...x_{i_d}) $ into the latter equation, we obtain
			\begin{equation}
				H	( x_{i_1},...x_{i_d})= \sum_{ {\bf r} \in {\mathbb N}^d} H_{\bf r} \psi_{\bf r}( x_{i_1},...x_{i_d}).
			\end{equation}		
			This implies that
			\begin{eqnarray}
			\begin{aligned}
				&\frac{1}{\prod_{k=1}^d n_k } \sum_{i_1=1}^{n_1}...\sum_{i_d=1}^{n_d} H	( x_{i_1},...x_{i_d}) \psi_{\bf p}	( x_{i_1},...x_{i_d})\nn\\
				&=  \frac{1}{\prod_{k=1}^d n_k } \sum_{i_1=1}^{n_1}...\sum_{i_d=1}^{n_d}  \Bigg( \sum_{{\bf r} \in {\mathbb N}^d} H_{\bf r} \psi_{\bf r}	( x_{i_1},...x_{i_d})  \Bigg) \psi_{\bf p}	( x_{i_1},...x_{i_d})\nn\\
				&= \sum_{r \in {\mathbb N}^d} H_{\bf r} \Bigg( \frac{1}{\prod_{k=1}^d n_k } \sum_{i_1=1}^{n_1}...\sum_{i_d=1}^{n_d}  \psi_{\bf r}	( x_{i_1},...x_{i_d})  \psi_{\bf p}	( x_{i_1},...x_{i_d})   \Bigg)\nn\\
				&= \underbrace{\sum_{ {\bf r} \notin \mathcal{N}_d } H_{\bf r} \Bigg( \frac{1}{\prod_{k=1}^d n_k } \sum_{i_1=1}^{n_1}...\sum_{i_d=1}^{n_d}  \psi_{\bf r}	( x_{i_1},...x_{i_d})  \psi_{\bf p}	( x_{i_1},...x_{i_d})   \Bigg)}_{:=\mathbb{H}_1} \nn\\
				&+\underbrace{ \sum_{ {\bf r} \in \mathcal{N}_d} H_{\bf r} \Bigg( \frac{1}{\prod_{k=1}^d n_k } \sum_{i_1=1}^{n_1}...\sum_{i_d=1}^{n_d}  \psi_{\bf r}	( x_{i_1},...x_{i_d})  \psi_{\bf p}	( x_{i_1},...x_{i_d} \Bigg)}_{:=\mathbb{H}_2}  , \label{ss0}
			\end{aligned}
				\end{eqnarray}
			where we denote
			\begin{equation}
				\mathcal{N}_d:= \Big\{  {\bf r}= (r_1,r_2,...r_d)\in {\bf N}^d: r_k =\overline{1,n_k-1},~k=\overline{1,d}  \Big\}.
			\end{equation}
			{\bf Step 1.} Consider $\mathbb{H}_1$.\\
			If  $ {\bf r} \notin \mathcal{N}_d$ then	applying the first part of Lemma \ref{lem2.1}, we have
			\begin{align}\label{tc11}
				\frac{1}{\prod_{k=1}^d n_k } \sum_{i_1=1}^{n_1}...\sum_{i_d=1}^{n_d}  \psi_{\bf r}	( x_{i_1},...x_{i_d})  \psi_{\bf p}	( x_{i_1},...x_{i_d})
				&=  \left\{\begin{array}{*{20}{l}}
					\hfill \dfrac{1}{\pi^d}, & r_k \mp p_k=2l_k n_k ,~k=\overline {1,d},\\[8pt]
					\hfill 0, & \text{otherwise.}
				\end{array} \right.
			\end{align}
			Hence, we deduce that
			\begin{align}\label{tc1111}
				&\sum_{r \notin \mathcal{N}_d } H_{\bf r} \Bigg( \frac{1}{\prod_{k=1}^d n_k } \sum_{i_1=1}^{n_1}...\sum_{i_d=1}^{n_d}  \psi_{\bf r}	( x_{i_1},...x_{i_d})  \psi_{\bf p}	( x_{i_1},...x_{i_d})   \Bigg)\nn\\
				&\quad \quad =  \left\{\begin{array}{*{20}{l}}
					\hfill \dfrac{1}{\pi^d}\sum_{{\bf r} \notin \mathcal{N}_d } H_{\bf r}, & r_k \mp p_k=2l_k n_k ,~k=\overline {1,d},\\[8pt]
					\hfill 0, & \text{otherwise.}
				\end{array} \right.\nn\\
				&\quad \quad=  \dfrac{1}{\pi^d} \sum_{r \notin \mathcal{N}_d }^{ {\bf r}=2{\bf l}\cdot {\bf n} \pm {\bf p}}  H_{\bf r}.
			\end{align}
			where noting that the equation $r_k \mp p_k=2l_k n_k$ is equavilent to ${\bf r}=2{\bf l}\cdot {\bf n} \pm {\bf p}$.\\
			For  the sum $\sum_{r \notin \mathcal{N}_d }^{ {\bf r}=2{\bf l}\cdot {\bf n} \pm {\bf p}} H_r$
			on the right hand side of  \eqref{tc1111}, since  $ {\bf r }\notin \mathcal{N}_d$, we can see that  ${\bf r}$ is not different than ${\bf p}$.
			This implies  that
			${\bf l} =(l_1,...l_d) \in {\mathbb N}^d$ in the sum $\sum_{{\bf r} \notin \mathcal{N}_d }^{ {\bf r}=2{\bf l}\cdot {\bf n} \pm {\bf p}} H_{\bf r}$  satisfies the following condition
			\[
			l_1^2+...+l_d^2 \neq 0. \label{tc11111}
			\]
			Therefore, we can rewrite \eqref{tc1111}
			as follows
			\begin{align}
				\sum_{{\bf r} \notin \mathcal{N}_d } H_{\bf r} \Bigg( \frac{1}{\prod_{k=1}^d n_k } \sum_{i_1=1}^{n_1}...\sum_{i_d=1}^{n_d}  \psi_{\bf r}	( x_{i_1},...x_{i_d})  \psi_{\bf p}	( x_{i_1},...x_{i_d})   \Bigg)= \frac{1}{\pi^d}   \sum_{{\bf r}=2{\bf l}\cdot {\bf n} \pm {\bf p}}^{ {\bf l}\in \mathbb{N}^d, l_1^2+..l_d^2 \neq 0  }  H_{\bf r}.  \label{ss00}
			\end{align}
			{\bf Step 2.} Consider $\mathbb{H}_2$. \\
			If  $ {\bf r} \in \mathcal{N}_d$ then 	applying the second part of Lemma \ref{lem2.1}, we get
			\begin{align}\label{tc111}
				\frac{1}{\prod_{k=1}^d n_k } \sum_{i_1=1}^{n_1}...\sum_{i_d=1}^{n_d}  \psi_{\bf r}	( x_{i_1},...x_{i_d})  \psi_{\bf p}	( x_{i_1},...x_{i_d})
				&=  \left\{\begin{array}{*{20}{l}}
					\hfill \dfrac{1}{\pi^d}, & r_k =p_k, ~k=\overline{1,d},\\
					\hfill 0, &\exists k=\overline{1,d}~r_k \neq p_k,
				\end{array} \right.
			\end{align}
			This leads to
			\begin{align}
				\sum_{r \in \mathcal{N}_d } H_{\bf r} \Bigg( \frac{1}{\prod_{k=1}^d n_k } \sum_{i_1=1}^{n_1}...\sum_{i_d=1}^{n_d}  \psi_{\bf r}	( x_{i_1},...x_{i_d})  \psi_{\bf p}	( x_{i_1},...x_{i_d})   \Bigg)= \frac{1}{\pi^d} H_{\bf p}. \label{ss000}
			\end{align}
			Combining \eqref{ss0}, \eqref{ss00},\eqref{ss000}, we obtain
			\begin{align}
				\frac{1}{\prod_{k=1}^d n_k } \sum_{i_1=1}^{n_1}...\sum_{i_d=1}^{n_d} H	( x_{i_1},...x_{i_d}) \psi_{\bf p}	( x_{i_1},...x_{i_d})= \frac{1}{\pi^d}  \Bigg(  H_{\bf p}+ \sum_{{\bf r}=2{\bf l}\cdot {\bf n} \pm {\bf p}}^{ l\in \mathbb{N}^d, l_1^2+..l_d^2 \neq 0  }  H_{\bf r} \Bigg).
			\end{align}
			This completes the proof of this Lemma.
		\end{proof}
		
		\noindent 	Using Lemma \ref{lem2.3}, we obtain the following Lemma
		\begin{lemma} \label{lem2}
			Assume that $G \in C([0,T]; C^1(\overline{\Omega})) $ then for $t \in [0,T]$, we have
			\begin{equation}
				G_{\bf p}(t)= \frac{\pi^d}{\prod_{k=1}^d n_k } \sum_{i_1=1}^{n_1}...\sum_{i_d=1}^{n_d} G	( x_{i_1},...x_{i_d},t) \psi_{\bf p}	( x_{i_1},...x_{i_d})- \overline{\Pi}_{ \bf n,p}(t)
			\end{equation}
			where  ${\bf p}$  satisfies that $p_k=\overline{1,n_k-1},~k=\overline{1,d}$ and
			$$\overline{\Pi}_{\bf n,p} (t)= \sum_{{\bf r}=2{\bf l}\cdot {\bf n} \pm {\bf p}}^{ l\in \mathbb{N}^d, l_1^2+..l_d^2 =0  }  G_{\bf r}(t). $$
		\end{lemma}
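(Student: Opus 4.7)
The plan is to reduce Lemma \ref{lem2} to a pointwise-in-$t$ application of Lemma \ref{lem2.3}. For each fixed $t \in [0,T]$, set $H_t(\cdot) := G(\cdot,t)$, which by hypothesis lies in $C^1(\overline{\Omega})$ and hence in $L^2(\Omega)$. The Fourier coefficients of $H_t$ with respect to the basis $\{\psi_{\bf p}\}$ are exactly $\langle G(\cdot,t),\psi_{\bf p}\rangle = G_{\bf p}(t)$. So once we know the identity of Lemma \ref{lem2.3} holds for $H_t$, the formula for $G_{\bf p}(t)$ follows immediately by reading off $H_{{\bf p}} = G_{\bf p}(t)$ and $\overline{\Gamma}_{{\bf n},{\bf p}} = \overline{\Pi}_{{\bf n},{\bf p}}(t)$.

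The one place where we must be careful is that Lemma \ref{lem2.3} is formulated for $H \in L^2(\Omega)$ but its proof uses the Fourier expansion pointwise at the grid nodes ${\bf x}_{\bf i}$, which is not justified for a generic $L^2$ function. This is precisely why the stronger hypothesis $G \in C([0,T];C^1(\overline{\Omega}))$ is imposed here: the $C^1$ regularity (together with the Dirichlet condition, so that the sine series actually represents $H_t$ up to the boundary) guarantees that the Fourier series
\[
H_t({\bf x}) = \sum_{{\bf r} \in \mathbb{N}^d} G_{\bf r}(t)\,\psi_{\bf r}({\bf x})
\]
converges absolutely and uniformly on $\overline{\Omega}$. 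This uniform convergence is what legitimately allows the interchange of the finite sum $\frac{1}{\prod_k n_k}\sum_{i_1,\dots,i_d} (\cdots)\psi_{\bf p}({\bf x}_{\bf i})$ with the infinite Fourier sum in the proof of Lemma \ref{lem2.3}, exactly as in steps $\mathbb{H}_1$ and $\mathbb{H}_2$ there.

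Once that interchange is justified, the proof is purely mechanical: split $\mathbb{N}^d = \mathcal{N}_d \cup (\mathbb{N}^d \setminus \mathcal{N}_d)$ and apply the two halves of Lemma \ref{lem2.1} to the orthogonality-type sums $\overline{R}_{{\bf p},{\bf r}}$. The contribution from ${\bf r} \in \mathcal{N}_d$ collapses (by the second case of Lemma \ref{lem2.1}) to $\pi^{-d} G_{\bf p}(t)$, while the contribution from ${\bf r} \notin \mathcal{N}_d$ collapses (by the first case) to $\pi^{-d}\sum_{{\bf r}=2{\bf l}\cdot {\bf n}\pm {\bf p}} G_{\bf r}(t)$ over the relevant set of multi-indices ${\bf l}$, which is exactly $\pi^{-d}\,\overline{\Pi}_{{\bf n},{\bf p}}(t)$. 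Multiplying through by $\pi^d$ and rearranging yields the stated identity.

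The only real obstacle is the uniform convergence argument justifying the swap of summations; everything else is copied verbatim (with $t$ as an inert parameter) from Lemma \ref{lem2.3}. One can state this as: the entire argument of Lemma \ref{lem2.3} applies to $H_t$ for each fixed $t$, with uniform convergence in ${\bf x}$ provided by the $C^1$ hypothesis, and then the identity holds for every $t \in [0,T]$.
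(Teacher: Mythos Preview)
Your proposal is correct and takes essentially the same approach as the paper: the paper's entire argument is the one-line remark ``Using Lemma~\ref{lem2.3}, we obtain the following Lemma,'' i.e., fix $t$ and apply Lemma~\ref{lem2.3} to $G(\cdot,t)$. You have in fact been more careful than the paper by explicitly identifying why the $C^1$ hypothesis is imposed (to justify the pointwise Fourier expansion at the grid nodes that Lemma~\ref{lem2.3} tacitly uses).
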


		Next we consider the following Lemma
		\begin{lemma} \label{lem2.6}
			Assume that $\mu=(\mu_1,...\mu_d)$. Let us choose $\mu_0 \ge { d \max(\mu_1,...\mu_d )}$.
			Then if $H \in \mathcal{H}^{\mu_0}(\Omega) $ then
			\begin{align}
				|H_{\bf p}| =|H_{p_1,...p_d}|\le  d^{-\frac{  \max(\mu_1,...\mu_d) }{2}}  \Big\| H \Big\|_{\mathcal{H}^{\mu_0}(\Omega) } \prod_{k=1}^d p_k^{-\mu_k} , \quad \text{for}\quad {\bf p}=(p_1,p_2,...p_d).
			\end{align}
		\end{lemma}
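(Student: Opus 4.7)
The plan is to reduce the claim to two elementary estimates: (i) a single-coefficient bound obtained directly from the definition of the $\mathcal{H}^{\mu_0}$-norm, and (ii) a pointwise comparison between $|{\bf p}|^{\mu_0}$ and $\prod_k p_k^{\mu_k}$ that is handled by AM--GM.

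First, from the definition
\begin{equation*}
\|H\|^2_{\mathcal{H}^{\mu_0}(\Omega)} = \sum_{{\bf p}\in\mathbb{N}^d} |{\bf p}|^{2\mu_0} H_{\bf p}^2,
\end{equation*}
every single term in the sum is at most the total, which gives the one-coefficient inequality
\begin{equation*}
|H_{\bf p}| \;\le\; |{\bf p}|^{-\mu_0}\,\|H\|_{\mathcal{H}^{\mu_0}(\Omega)}.
\end{equation*}
So everything reduces to showing $|{\bf p}|^{\mu_0}\ge d^{M/2}\prod_{k=1}^d p_k^{\mu_k}$, where I denote $M:=\max(\mu_1,\dots,\mu_d)$.

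Next, I would apply the arithmetic--geometric mean inequality to the $d$ positive numbers $p_1^2,\ldots,p_d^2$:
\begin{equation*}
|{\bf p}|^2 \;=\; \sum_{k=1}^d p_k^2 \;\ge\; d\Bigl(\prod_{k=1}^d p_k^2\Bigr)^{1/d} \;=\; d\Bigl(\prod_{k=1}^d p_k\Bigr)^{2/d},
\end{equation*}
so raising to the $\mu_0/2$ power yields
\begin{equation*}
|{\bf p}|^{\mu_0} \;\ge\; d^{\mu_0/2}\Bigl(\prod_{k=1}^d p_k\Bigr)^{\mu_0/d}.
\end{equation*}
Since the hypothesis is $\mu_0\ge dM$, I have $\mu_0/d\ge M\ge \mu_k$ for each $k$; combined with $p_k\ge 1$ this gives $\prod_k p_k^{\mu_0/d}\ge \prod_k p_k^{\mu_k}$. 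Similarly, $\mu_0\ge M$ together with $d\ge 1$ gives $d^{\mu_0/2}\ge d^{M/2}$. Putting the two estimates together,
\begin{equation*}
|{\bf p}|^{\mu_0} \;\ge\; d^{M/2}\,\prod_{k=1}^d p_k^{\mu_k},
\end{equation*}
which is precisely the inequality needed.

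Combining the two bullets,
\begin{equation*}
|H_{\bf p}| \;\le\; |{\bf p}|^{-\mu_0}\|H\|_{\mathcal{H}^{\mu_0}(\Omega)} \;\le\; d^{-M/2}\Bigl(\prod_{k=1}^d p_k^{-\mu_k}\Bigr)\|H\|_{\mathcal{H}^{\mu_0}(\Omega)},
\end{equation*}
which is the stated inequality. There is no real obstacle in this proof; the only place that requires a bit of care is making sure one uses $\mu_0\ge dM$ rather than merely $\mu_0\ge M$, as it is this stronger hypothesis that upgrades the naive bound $\prod p_k^{\mu_k}\le |{\bf p}|^{\sum \mu_k}$ into the factorized form with exponents $\mu_k$ on the individual coordinates, and simultaneously lets one absorb the constant $d^{M/2}$ via AM--GM.
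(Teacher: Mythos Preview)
Your proof is correct and follows essentially the same route as the paper: both combine the single-coefficient bound $|H_{\bf p}|\le |{\bf p}|^{-\mu_0}\|H\|_{\mathcal{H}^{\mu_0}}$ with an AM--GM estimate on $\sum_k p_k^2$ to compare $|{\bf p}|^{\mu_0}$ with $d^{M/2}\prod_k p_k^{\mu_k}$. The only cosmetic difference is that the paper raises the AM--GM inequality to the power $dM/2$ and then uses $|{\bf p}|^{dM}\le |{\bf p}|^{\mu_0}$, whereas you raise to the power $\mu_0/2$ and then lower the constant $d^{\mu_0/2}$ to $d^{M/2}$; the two arguments are equivalent.
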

		\begin{proof}
			Using the Cauchy inequality
			\begin{equation*}
				(a_1+...+a_d)^d \ge d \prod_{k=1}^d a_k
			\end{equation*}
			for any $a_k \ge 0, k=\overline{1,d}$, we have
			\begin{align*}
				\left( \sum_{k=1}^d p_k^2 \right)^{\frac{ d \max(\mu_1,...\mu_d) }{2}} &\ge \Big(d  \prod_{k=1}^d p_k^2  \Big)^{\frac{  \max(\mu_1,...\mu_d) }{2}}\nn\\
				&\ge d^{\frac{  \max(\mu_1,...\mu_d) }{2}} \prod_{k=1}^d p_k^{\mu_k}.
			\end{align*}
			The left hand side of the latter inequality is bounded by
			\begin{equation*}
				\left( \sum_{k=1}^d p_k^2 \right)^{\frac{ d \max(\mu_1,...\mu_d) }{2}} = |{\bf p}|^{   d \max(\mu_1,...\mu_d)  }  \le |{\bf p}|^{\mu_0}
			\end{equation*}
			The observations  above imply that
			\begin{equation*}
				\prod_{k=1}^d p_k^{\mu_k}  \le  d^{-\frac{  \max(\mu_1,...\mu_d) }{2}}  |{\bf p}|^{\mu_0}.
			\end{equation*}
			This leads to
			\begin{equation*}
				\prod_{k=1}^d p_k^{\mu_k}  \big|<H, \psi_{\bf p}>\big| \le d^{-\frac{  \max(\mu_1,...\mu_d) }{2}}  |{\bf p}|^{\mu_0} \big|<H, \psi_{\bf p}>\big|  \le d^{-\frac{  \max(\mu_1,...\mu_d) }{2}}  \Big\| H \Big\|_{\mathcal{H}^{\mu_0}(\Omega) }
			\end{equation*}
			where we used the fact that
			\begin{equation}
				\Big\| H \Big\|^2_{\mathcal{H}^{\mu_0}(\Omega) }=	\sum_{p_1=1}^\infty...\sum_{p_d=1}^\infty |{\bf p}|^{2\mu_0}  <H, \psi_{\bf p}>^2 ~\ge~ |{\bf p}|^{2\mu_0}  <H, \psi_{\bf p}>^2.	
			\end{equation}
			This completes the proof of Lemma.
		\end{proof}

		\begin{proof}[\bf Proof of Theorem \ref{theorem2.1}]

			\noindent 	Using Lemma \ref{lem2.3} and by a simple computation, we get
			\begin{align}
				&\Big\| \widehat H_{\beta_{\bf n}} -H  \Big\|^2_{L^2(\Omega)}\nn\\
				&= 4  \sum_{{\bf p} \in 	\mathcal{W}_{  \beta_{\bf n}} } \Bigg[\frac{\pi^d}{\prod_{k=1}^d n_k } \sum_{i_1=1}^{n_1}...\sum_{i_d=1}^{n_d} \Lambda_{i_1,i_2,...i_d} \Upsilon_{i_1,i_2,...i_d} \psi_{\bf p}	( x_{i_1},...x_{i_d}) -{\bf \overline \Gamma}_{\bf n,p}\Bigg]^2  \nn\\
				&\quad \quad \quad  + 4  \sum_{{\bf p} \notin  	\mathcal{W}_{  \beta_{\bf n}} } \Big| H_{\bf p}\Big|^2\nn\\
				&\le \underbrace{\frac{8\pi^{2d}}{\left(\prod_{k=1}^d n_k\right)^2 } \sum_{{\bf p} \in 	\mathcal{W}_{  \beta_{\bf n}} } \Bigg[\sum_{i_1=1}^{n_1}...\sum_{i_d=1}^{n_d} \Lambda_{i_1,i_2,...i_d} \Upsilon_{i_1,i_2,...i_d}  \Bigg]^2}_{:=A_{1,1}}\nn\\
				&+\underbrace{8 \sum_{{\bf p} \in 	\mathcal{W}_{  \beta_{\bf n}} } \Big|{\bf \overline \Gamma}_{\bf n,p} \Big|^2}_{:=A_{1,2}}+ \underbrace{4  \sum_{{\bf p} \notin  	\mathcal{W}_{  \beta_{\bf n}} } \Big| H_{\bf p}\Big|^2}_{:=A_{1,3}}  \label{errora}
			\end{align}
			where  we used the inequality $(a+b)^2 \le 2 a^2+ 2b^2$.
			The expectation of $A_{1,1}$ is bounded by
			\begin{align}
				{\bf  E} A_{1,1} & \le \frac{8\pi^{2d}}{\left(\prod_{k=1}^d n_k\right)^2 } \sum_{{\bf p} \in 	\mathcal{W}_{  \beta_{\bf n}} } \frac{  \prod_{k=1}^d n_k}{\pi^d} V_{\max}^2  \nn\\
				&= \frac{8 \pi^d}{ \prod_{k=1}^d n_k } V_{\max}^2  \text{card} \left(\mathcal{W}_{  \beta_{\bf n}} \right) ,
				 \label{errorb-1}
			\end{align}
			above we used the fact  that ${\bf  E}^2 \Upsilon_{i_1,i_2,...i_d}  =1$ and $ {\bf E} \left(\Upsilon_{i_1,i_2,...i_d} \Upsilon_{j_1,j_2,...j_d}  \right)  =0$.
			Now we estimate the
			\begin{equation}
			 \text{card} \left(\mathcal{W}_{  \beta_{\bf n}} \right)  = \text{card} \left(  \Big\{ {\bf p} =(p_1,...p_d) \in \mathbb{N}^d  :  \sum_{k=1}^d p_k^2 \le \beta_{\bf n}= \beta(n_1,...n_d)  \Big\} \right),
			 \end{equation}
		which is the number of ${\bf p}$ such that $ |{\bf p}|^2 \le \beta_{\bf n}$. Let any ${\bf p}=(p_1,...p_d) \in \mathbb{N}^d  $ such that $\sum_{k=1}^d p_k^2 \le \beta_{\bf n}$.
		  Let us define  rectangles  $ {\bf Q}_{ {\bf p}  }$   in $\mathbb{R}^d  $ as follows
		  \begin{equation*}
		  {\bf Q}_{ {\bf p}  } = \Big\{  {\bf z}= (z_1,z_2,...z_d) \in \mathbb{R}^d:  p_{k}-1 \le z_k \le p_k  \Big\}.
		  \end{equation*}
		Let us define  the set  $ {\bf Q}_{ \sqrt{\beta_{\bf n}}  }$   as follows
		\begin{align*}
		{\bf Q}_{ \sqrt{\beta_{\bf n}}  }= \bigcup_{ {\bf p}^2 \le \beta_{\bf n} }    {\bf Q}_{ {\bf p}  }.
		\end{align*}
		 It is easy to see that  $ \text{card} \left(\mathcal{W}_{  \beta_{\bf n}} \right)$ is equal to the  volume of the set  $ {\bf Q}_{ \sqrt{\beta_{\bf n}}  } $  and we can realize that
		 \begin{equation*}
		 {\bf Q}_{ \sqrt{\beta_{\bf n}}  } \subset {\bf Q'}_{ \sqrt{\beta_{\bf n}}  } = \Big\{ {\bf z} =(z_1,...z_d) \in \mathbb{R}^d  :  \sum_{k=1}^d z_k^2 \le \beta_{\bf n} \Big\}.
		 \end{equation*}
		  Hence $
		  \text{card} \left(\mathcal{W}_{  \beta_{\bf n}} \right)$ is less than the
		volume of the set   ${\bf Q'}_{ \sqrt{\beta_{\bf n}}  }$ which denoted by $ \text{Vol} \Big({\bf Q'}_{ \sqrt{\beta_{\bf n}}  }\Big)$ i.e,
		\begin{equation*}
		 \text{card} \left(\mathcal{W}_{  \beta_{\bf n}} \right) \le \text{Vol} \Big({\bf Q'}_{ \sqrt{\beta_{\bf n}}  }\Big). \label{ob0}
		\end{equation*}
		  Now we  find an upper bound of  $ \text{Vol} \Big({\bf Q'}_{ \sqrt{\beta_{\bf n}}  }\Big)$. First,  we have
		\begin{equation}
		\text{Vol} \Big({\bf Q'}_{ \sqrt{\beta_{\bf n}}  }\Big) = \int_{   {\bf Q'}_{ \sqrt{\beta_{\bf n}}  } } {\bf d}z_1 {\bf d}z_2...{\bf d}z_d.
		\end{equation}
		We give  the  following coordinate system as follows
	\begin{align*}
	&z_1= r \cos (\theta_1),~z_2=r  \sin (\theta_1)\cos (\theta_2), z_3= r \sin (\theta_1) \sin (\theta_2)\cos (\theta_3),...\nn\\
	& z_{d-1}= r \sin (\theta_1) ...\sin (\theta_{d-2})\cos (\theta_{d-1})\nn\\
	& z_{d}= r \sin (\theta_1) ...\sin (\theta_{d-2})\sin (\theta_{d-1})
	\end{align*}	
		where \[
		 1 \le r \le  \sqrt{\beta_{\bf n}} ,~ 0 \le \theta_i  \le \pi, i=\overline{1. d-2}, 0 \le \theta_{d-1} <2 \pi .
		 \]
	From the Change of Variables formula that the rectangular volume element
$ {\bf d}z_1 {\bf d}z_2...{\bf d}z_d$
	can be written in spherical coordinates as
	\begin{align*}
	 {\bf d}z_1 {\bf d}z_2...{\bf d}z_d &= \Bigg| \text{det} \Big( \frac{\partial x_i}{ \partial(r, \theta_j) }\Big)  \Bigg|_{1 \le i \le d, 1 \le j \le d-1}~~{\bf d}r  {\bf d}\theta_{1}...{\bf d} {\theta}_{d-1}\nn\\
	 &= r^{d-1} \sin^{d-2}(\theta_1) \sin^{d-3}(\theta_2) ...\sin (\theta_{d-2})  {\bf d}r  {\bf d}\theta_{1}...{\bf d} {\theta}_{d-1}.
	\end{align*}
	Hence, applying  Fubini's theorem, we obtain that
	\begin{align}
		&\text{Vol} \Big({\bf Q'}_{ \sqrt{\beta_{\bf n}}  }\Big)\nn\\
		& = \int_{   \mathcal{W}_{  \rho_{\bf n}}({\bf n}) } {\bf d}z_1 {\bf d}z_2...{\bf d}z_d\nn\\
		&= \int_0^{2\pi} \int_0^\pi ...\int_0^\pi \int_{1}^{ \sqrt{\beta_{\bf n} }} r^{d-1} \sin^{d-2}(\theta_1) \sin^{d-3}(\theta_2) ...\sin (\theta_{d-2})  {\bf d}r  {\bf d}\theta_{1}... {\bf d}\theta_{d-2} {\bf d} {\theta}_{d-1}\nn\\
		&= \left( \int_0^{2\pi}  {\bf d} {\theta}_{d-1} \right) \left( \int_{1}^{ \sqrt{\beta_{\bf n}} } r^{d-1} dr \right) \left( \int_0^{\pi}  \sin^{d-2}(\theta_1)  {\bf d}\theta_{1} \right)  \left( \int_0^{\pi}  \sin^{d-3}(\theta_2)  {\bf d}\theta_{2} \right)...\left( \int_0^{\pi}  \sin(\theta_{d-2})  {\bf d}\theta_{d-2} \right)\nn\\
		&= \frac{2\pi \left( \beta_{\bf n}^{d/2}-1 \right)}{d} \left( \int_0^{\pi}  \sin^{d-2}(\theta_1)  {\bf d}\theta_{1} \right)  \left( \int_0^{\pi}  \sin^{d-3}(\theta_2)  {\bf d}\theta_{2} \right)...\left( \int_0^{\pi}  \sin(\theta_{d-2})  {\bf d}\theta_{d-2} \right). \label{ob1}
	\end{align}
	Thanks to page 245, \cite{Giple}, we know that
	\begin{align}
	\left( \int_0^{\pi}  \sin^{d-2}(\theta_1)  {\bf d}\theta_{1} \right)  \left( \int_0^{\pi}  \sin^{d-3}(\theta_2)  {\bf d}\theta_{2} \right)...\left( \int_0^{\pi}  \sin(\theta_{d-2})  {\bf d}\theta_{d-2} \right)= \frac{d}{ 2\pi}  \text{Vol} \Big( \mathbb{B}^d(1)\Big).  \label{ob2}
	\end{align}
	Here $ \text{Vol} \Big( \mathbb{B}^d(1)\Big)$ denotes the volume of  of the unit
	$d$-ball.
	 Using again \cite{Giple} ( Proposition 4.2, page 246-247), we obtain that
	 \begin{equation}
	  \text{Vol} \Big( \mathbb{B}^d(1)\Big)= \frac  {  \pi^{d/2}}  {\Gamma(\frac{d}{2}+1)} . \label{ob3}
	 \end{equation}
	 Combining \eqref{ob1}, \eqref{ob2}, \eqref{ob3} gives
	 \begin{align}
	 \text{Vol} \Big({\bf Q'}_{ \sqrt{\beta_{\bf n}}  }\Big) \le \frac  {  \pi^{d/2} \beta_{\bf n}^{d/2} }  {\Gamma(\frac{d}{2}+1)} = \frac{2 \pi^{d/2}}{ d  \Gamma(d/2)} \beta_{\bf n}^{d/2}
	 \end{align}
	 which we have used the fact that $ \Gamma(\frac{d}{2}+1)= \frac{d}{2}\Gamma(\frac{d}{2}) $. This together with \eqref{ob0} leads to
			\begin{align}
				\text{card} \left(  	\mathcal{W}_{  \beta_{\bf n}} \right) = &\text{card} \left(  \Big\{ {\bf p} =(p_1,...p_d) \in \mathbb{N}^d,~p_k \ge 1, k=\overline{1,d}  :  \sum_{k=1}^d p_k^2 \le \beta_{\bf n}= \beta(n_1,...n_d)  \Big\} \right) \nn\\
				&\le \frac{2 \pi^{d/2}}{ d  \Gamma(d/2)} \beta_{\bf n}^{d/2}.  \label{card}
			\end{align}
			This implies that
			\begin{align}
				{\bf  E} A_{1,1}  \le   8\pi^d  V_{max}^2  \frac{2 \pi^{d/2}}{ d \Gamma(d/2)}   \frac{  \beta_{\bf n}^{d/2} }{ \prod_{k=1}^d n_k }. \label{errorb}
			\end{align}
			Next, in order to  estimate $A_{1,2}$, we need to find an  upper bound of ${\bf \overline \Gamma }_{\bf n,p}$.
			
			\noindent Using  Lemma \ref{lem2.6},  we  estimate $\Big|{\bf \overline \Gamma }_{\bf n,p}\Big|$ as follows
			\begin{align}
				\Big|{\bf \overline \Gamma }_{\bf n,p}\Big|  &\le \sum_{{\bf r}=2{\bf l}\cdot {\bf n} \pm {\bf p}}^{ l\in \mathbb{N}^d, l_1^2+..l_d^2 \neq 0  }  \Big|H_r\Big| = \sum_{ l\in \mathbb{N}^d, l_1^2+..l_d^2 \neq 0  }  \Big| H_{2l_1n_1 \pm p_1,...2l_dn_d \pm p_d} \Big|  \nn\\
				&\le d^{-\frac{  \max(\mu_1,...\mu_d) }{2}}  \Big\| H \Big\|_{\mathcal{H}^{\mu_0}(\Omega) }  \sum_{ l\in \mathbb{N}^d, l_1^2+..l_d^2 \neq 0  }  \prod_{k=1}^d (2l_k n_k \pm p_k)^{-\mu_k}  .   \label{es00000}
			\end{align}
			Furthermore, for $p_k =\overline {1,n_k}$ for $k=\overline {1,d}$,  we have
			\begin{align}
				\prod_{k=1}^d (2l_k n_k +  p_k)^{\mu_k} \ge \prod_{k=1}^d  ( n_k)^{\mu_k}  \prod_{k=1}^d  ( 2 l_k)^{\mu_k}  \label{es11111}
			\end{align}
			and
			\begin{align}
				\prod_{k=1}^d (2l_k n_k - p_k)^{\mu_k} \ge \prod_{k=1}^d (2l_k n_k - n_k)^{\mu_k} \ge \prod_{k=1}^d  ( n_k)^{\mu_k}  \prod_{k=1}^d  ( 2 l_k-1)^{\mu_k}.  \label{es22222}
			\end{align}
			Combining \eqref{es11111} and \eqref{es22222} gives
			\begin{align}
				\prod_{k=1}^d (2l_k n_k \pm p_k)^{-\mu_k} \le \prod_{k=1}^d  ( n_k)^{-2\mu_k}  \prod_{k=1}^d  ( 2 l_k-1)^{-2\mu_k}.
			\end{align}
			This together with \eqref{es00000} implies that
			\begin{align}
				\Big|{\bf \overline \Gamma }_{\bf n,p}\Big| \le d^{-\frac{  \max(\mu_1,...\mu_d) }{2}}  \Big\| H \Big\|_{\mathcal{H}^{\mu_0}(\Omega) } \prod_{k=1}^d  ( n_k)^{-2\mu_k} \sum_{ l\in \mathbb{N}^d, l_1^2+..l_d^2 \neq 0  }  \prod_{k=1}^d  ( 2 l_k-1)^{-2\mu_k}.
			\end{align}
			Since $\mu_k >\frac{1}{2}$, we know that the series  $  d^{-\frac{  \max(\mu_1,...\mu_d) }{2}}  \sum_{ l\in \mathbb{N}^d, l_1^2+..l_d^2 \neq 0  }  \prod_{k=1}^d  ( 2 l_k-1)^{-2\mu_k}$ converges. So we define this sum to be $\mathcal{C} (\mu_1,...\mu_d)$. Hence
			\begin{align}
				\Big|{\bf \overline \Gamma }_{\bf n,p}\Big| \le \mathcal{C} (\mu_1,...\mu_d) \Big\| H \Big\|_{\mathcal{H}^{\mu_0}(\Omega) } \prod_{k=1}^d  ( n_k)^{-2\mu_k}.
			\end{align}
			It  follows from \eqref{card} that
			\begin{align}
				A_{1,2}&= 8 \sum_{{\bf p} \in \mathcal{W}_{  \beta_{\bf n}} } \Big|{\bf \overline \Gamma}_{\bf n,p} \Big|^2 \nn\\
				&\le 8 \mathcal{C}^2 (\mu_1,...\mu_d) \Big\| H \Big\|^2_{\mathcal{H}^{\mu_0}(\Omega) } \prod_{k=1}^d  ( n_k)^{-4\mu_k} \text{card} \left(\mathcal{W}_{  \beta_{\bf n}} \right)\nn\\
				&\le \frac{ 16 \mathcal{C}^2 (\mu_1,...\mu_d) \pi^{d/2}}{ d  \Gamma(d/2)}  \Big\| H \Big\|^2_{ \mathcal{H}^{\mu_0}(\Omega) } \beta_{\bf n}^{d/2} \prod_{k=1}^d  ( n_k)^{-4\mu_k}.  \label{errorc}
			\end{align}
		For $A_{1,3}$ on the right hand side of \eqref{errora}, noting that $ {\bf |p|^2}  \ge \beta_{\bf n} $ if $ {\bf p} \notin  \mathcal{W}_{  \beta_{\bf n}} $,  we have the following estimate
		
\begin{equation}
		A_{1,3}=4  \sum_{{\bf p} \notin  \mathcal{W}_{  \beta_{\bf n}}   }  |{\bf p}|^{-2\mu_0} |{\bf p}|^{2\mu_0} \Big| H_{\bf p}\Big|^2 \le { 4 \beta_{\bf n}^{-\mu_0}} \Big\| H \Big\|^2_{\mathcal{H}^{\mu_0}(\Omega) }. \label{errord-1}
		\end{equation}
		Combining \eqref{errora},  \eqref{errorb-1}, \eqref{errorc}, \eqref{errord-1}, we obtain
		\begin{align}
		&{\bf E}\Big\| \widehat H_{\beta_{\bf n}} -H  \Big\|^2_{L^2(\Omega)}\nn\\
		& \le {\bf E}A_{1,1}+ A_{1,2}+A_{1,3} \nn\\
		&\le 8\pi^d  V_{max}^2  \frac{2 \pi^{d/2}}{ d 2^d \Gamma(d/2)}   \frac{  \beta_{\bf n}^{d/2} }{ \prod_{k=1}^d n_k }+\frac{ 16 \mathcal{C}^2 (\mu_1,...\mu_d) \pi^{d/2}}{ d  \Gamma(d/2)}  \Big\| H \Big\|^2_{\mathcal{H}^{\mu_0}(2\Omega) } \beta_{\bf n}^{d/2} \prod_{k=1}^d  ( n_k)^{-4\mu_k}\nn\\
		&+4 \beta_{\bf n}^{-\mu_0} \Big\| H \Big\|^2_{\mathcal{H}^{\mu_0}(\Omega) }\nn\\
		&\le \overline C (\mu_1,...\mu_d, H) \beta_{\bf n}^{d/2} \prod_{k=1}^d  ( n_k)^{-4\mu_k}+ 4\beta_{\bf n}^{-\mu_0} \Big\| H \Big\|^2_{\mathcal{H}^{\mu_0}(\Omega) }.\nn
		\end{align}
		By a similar method used in the first part of the proof we can apply the previous results using Lemma \ref{lem2}, we immediately obtain the second error estimation.
		\end{proof}

		\section{Backward problem for parabolic equation with constant  coefficients}
		\setcounter{equation}{0}
		In this section, we consider  the problem of recovering  ${\bf u}({\bf x},t), 0\le t <T,$  such that
		\bq
		\left\{ \begin{gathered}
			{\bf u}_t({\bf x},t )+\A{\bf u}({\bf x}, t)  = F({\bf u}({\bf x},t))+G({\bf x},t) ,~~0<t<T, \ \ {\bf x} \in \Omega, \hfill \\
			{\bf u}({\bf x},t)= 0,~~{\bf x} \in \partial \Omega,\hfill\\
			{\bf u}({\bf x},T)= H({\bf x}), \hfill\\
		\end{gathered}  \right. \label{problem01111}
		\eq
		where  $H  \in L^2(\Omega)$ is a given function. The operator $\A$  solves the following eigenvalue problem
		\begin{align*}
			\A \psi_{\bf p}({\bf x})=  {\bf\ M} (|{\bf p}|) \psi_{\bf p}({\bf x}),
		\end{align*}
		for a non-decreasing function ${\bf M} $ and the eigenfunctions $ \psi_{\bf p}({\bf x})$ defined in \eqref{eigenfunctions-laplacian}.\\
		\noindent Now, we give some examples of  operators  $\A$ defined by the spectral theorem  using the Laplacian in $\Omega$ and the corresponding  eigenvalues.
		\begin{example}
			\begin{enumerate}[{ \bf \upshape(a)}]	
				
				\item If $\A=-\Delta$ { in $\Omega$ with Dirichlet boundary conditions} then Problem \eqref{problem01111} is called nonlinear heat equation and    its eigenvalues  are $$ {\bf M} (|{\bf p}|)=|{\bf p}|^2=p_1^2+...+p_d^2. $$
				In this case the eigenfunctions are given by equation \eqref{eigenfunctions-laplacian}.

				\item If $\A=\Delta^2 $ in $\Omega$ with Dirichlet boundary conditions
				then  Problem \eqref{problem01111} is called nonlinear biharmonic heat equation (see \cite{Mo}) and using the spectral theorem
				its eigenvalues  are $$ {\bf M} (|{\bf p}|)=|{\bf p}|^4=\Big(p_1^2+...+p_d^2\Big)^2. $$
				In this case  again, the eigenfunctions are given by equation \eqref{eigenfunctions-laplacian}.
				
				\item If $\A=-\Delta+\Delta^2$ in $\Omega$ with Dirichlet boundary conditions then Problem \eqref{problem01111} is called  extended Fisher-Kolmogorov equation (see \cite{He,Kwa}) and using the spectral theorem   its eigenvalues  are
				\[ {\bf M} (|{\bf p}|)=|{\bf p}|^2+|{\bf p}|^4= p_1^2+...+p_d^2+ \Big( p_1^2+...+p_d^2 \Big)^2.
				\]
				In this case  again, the eigenfunctions are given by equation \eqref{eigenfunctions-laplacian}.
				
				\item If $\A=2\Delta+\Delta^2$ in $\Omega$ with Dirichlet boundary conditions then Problem \eqref{problem011} is called Swift-Hohenberg equation (see \cite{Gio}) and using the spectral theorem   its eigenvalues  are
				\[ {\bf M} (|{\bf p}|)=-2|{\bf p}|^2+|{\bf p}|^4=-2\Big( p_1^2+...+p_d^2\Big)+ \Big( p_1^2+...+p_d^2 \Big)^2.
				\]
				In this case  again, the eigenfunctions are given by equation \eqref{eigenfunctions-laplacian}.
			\end{enumerate}
		\end{example}
		
		\begin{proposition}  \label{proposition3.1}
			If Problem \eqref{problem01111} has a unique solution ${\bf u}$ then
			it satisfies that
			\begin{align}
				{\bf u}({\bf x},t)&=  	\sum_{p \in  {\mathbb N}^d}  \Big[ e^{(T-t) {\bf M} (|{\bf p}|) } H_{\bf p}  -\int_t^T e^{(\tau-t) {\bf M} (|{\bf p}|) }  G_{\bf p}(\tau)d\tau\Big] \psi_{\bf p}({\bf x})\nn\\
				-&\sum_{{\bf p}\in  {\mathbb N}^d}  \Big[ \int_t^T e^{(\tau-t) {\bf M} (|{\bf p}|)  }  F_{\bf p}(	u)(\tau)d\tau\Big] \psi_{\bf p}({\bf x}). \label{resol3}
			\end{align}
			Where  $G_{\bf p}(\tau)=\big<G(\cdot, \tau),\psi_{\bf p}\big>, H_{\bf p}=\big<H,\psi_{\bf p}\big>, F_{\bf p}(	{\bf u})(\tau)=\big<F({\bf u}(\cdot, \tau)),\psi_{\bf p}\big>$ are the Fourier coefficients.
		\end{proposition}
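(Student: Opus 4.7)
The plan is to reduce the PDE to a decoupled family of first--order linear ODEs for the Fourier coefficients $u_{\bf p}(t):=\big<{\bf u}(\cdot,t),\psi_{\bf p}\big>$ in the orthonormal basis $\{\psi_{\bf p}\}_{{\bf p}\in\mathbb{N}^d}$ of $L^2(\Omega)$, solve each ODE backwards in time using the terminal condition at $t=T$, and then reassemble the series. First, I would expand ${\bf u}(\cdot,t)$, $F({\bf u}(\cdot,t))$, $G(\cdot,t)$ and $H$ in this basis. Pairing the identity ${\bf u}_t+\A{\bf u}=F({\bf u})+G$ with $\psi_{\bf p}$ in $L^2(\Omega)$, using the self--adjointness of $\A$ together with $\A\psi_{\bf p}={\bf M}(|{\bf p}|)\psi_{\bf p}$, yields for every ${\bf p}\in\mathbb{N}^d$ the scalar terminal value problem
\begin{equation*}
u_{\bf p}'(t)+{\bf M}(|{\bf p}|)\,u_{\bf p}(t)=F_{\bf p}({\bf u})(t)+G_{\bf p}(t),\qquad u_{\bf p}(T)=H_{\bf p}.
\end{equation*}

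Second, I would multiply by the integrating factor $e^{t{\bf M}(|{\bf p}|)}$ to rewrite the ODE in the form $\bigl(e^{t{\bf M}(|{\bf p}|)}u_{\bf p}(t)\bigr)'=e^{t{\bf M}(|{\bf p}|)}\bigl(F_{\bf p}({\bf u})(t)+G_{\bf p}(t)\bigr)$ and integrate from $t$ to $T$, invoking $u_{\bf p}(T)=H_{\bf p}$, to obtain
\begin{equation*}
u_{\bf p}(t)=e^{(T-t){\bf M}(|{\bf p}|)}H_{\bf p}-\int_t^T e^{(\tau-t){\bf M}(|{\bf p}|)}\bigl[F_{\bf p}({\bf u})(\tau)+G_{\bf p}(\tau)\bigr]d\tau.
\end{equation*}
Substituting this coefficient formula into ${\bf u}({\bf x},t)=\sum_{{\bf p}\in\mathbb{N}^d}u_{\bf p}(t)\psi_{\bf p}({\bf x})$ and separating the contributions containing $H_{\bf p}$, $G_{\bf p}$ from those containing $F_{\bf p}({\bf u})$ produces exactly the representation \eqref{resol3}.

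The main obstacle is justifying the two interchanges that appear: termwise time--differentiation of the Fourier series in the step where the PDE is projected onto each mode, and convergence in $L^2(\Omega)$ of the reassembled series. Since $e^{(T-t){\bf M}(|{\bf p}|)}$ grows without bound as $|{\bf p}|\to\infty$, both operations are meaningful only under a strong a priori spectral decay hypothesis on ${\bf u}(\cdot,t)$. This is precisely why the proposition is phrased conditionally: the standing assumption stated in the introduction is that ${\bf u}$ exists uniquely in an "appropriate space"; under that hypothesis each interchange reduces to a routine dominated--convergence argument, so no further analytic input beyond the eigenfunction expansion and the explicit solution of the scalar ODE is required.
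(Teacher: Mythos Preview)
Your proposal is correct and follows the standard derivation: project onto each eigenfunction, solve the resulting scalar ODE with an integrating factor, and sum. The paper does not actually give a proof of this proposition; it simply refers to Theorem~3.1 of \cite{Tuan2011} and omits the argument, so your write-up is effectively supplying what the paper leaves out, and it matches the expected route.
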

		
		\begin{proof}
			{  The proof is similar to Theorem 3.1, page 2975, \cite{Tuan2011}  and we omit it here.}
		\end{proof}
		Let's define ${\mathbb P}_{\overline M}$  as the operator of  orthogonal projection onto the eigenspace spanned by the set $\Big\{  \psi_{\bf p}({\bf x}), 1 \le |{\bf p}|^2 \le \overline M \Big\}$ for a given $\overline M$ i.e,
		\begin{equation}  \label{projection}
			{\mathbb P}_{\overline M} v ({\bf x}):=  \sum_{|{\bf p}|^2 \le \overline M   } \Big< v, \psi_{\bf p} \Big>_{L^2(\Omega)}\psi_{\bf p}({\bf x}),~~\text{for all}~~v \in L^2(\Omega).
		\end{equation}
	Our method in this subsection is described as follows: First, we approximate  the two functions $H$ and $G$ by $H_{\beta_{\bf n}}$ and  $G_{\beta_{\bf n}}$ which are defined in Theorem \eqref{theorem2.1}. Then we use the Fourier truncation method by adding the operator ${\mathbb P}_{\rho_{\bf n}}$  and  introducing  the following regularized problem
		\bq
		\left\{ \begin{gathered}
			\frac{\partial \overline U_{\rho_{\bf n}, \beta_{\bf n} }({\bf x},t) }{\partial t}+\A \overline U_{\rho_{\bf n}, \beta_{\bf n}}({\bf x},t)  = 	{\mathbb P}_{\rho_{\bf n}} F( \overline U_{\rho_{\bf n}, \beta_{\bf n} } ({\bf x},t))+{\mathbb P}_{\rho_{\bf n}} G({\bf x},t) ,~~0<t<T, \hfill \\
			\overline U_{\rho_{\bf n}, \beta_{\bf n} }({\bf x},t)= 0,~~{\bf x} \in \partial \Omega,\hfill\\
			\overline U_{\rho_{\bf n}, \beta_{\bf n}} ({\bf x},T)= {\mathbb P}_{\rho_{\bf n}} \widehat H_{\beta_{\bf n}}({\bf x}), \hfill\\
		\end{gathered}  \right. \label{regularizedproblem}
		\eq
		where $\rho_{\bf n}$ is called the   regularization parameter. 	The function  $\rho:\mathbb{N}^d\to\mathbb{R}$  is a function that depend on $\beta_{\bf n}$ and we use the notation $ \rho_{\bf n}=\rho({\bf n})$. Noting that $\lim_{{\bf n} \to 0} \rho_{ {\bf n}}=\infty$.
			Define the set  $ 	\mathcal{W}_{  \rho_{\bf n}} $ for any ${\bf n}=(n_1,..n_d)\in \mathbb{N}^d$
			\begin{equation}
			\mathcal{W}_{  \rho_{\bf n}}=  \Big\{ {\bf p} =(p_1,...p_d) \in \mathbb{N}^d  : |{\bf p}|^2= \sum_{k=1}^d p_k^2 \le \rho_{\bf n}= \rho(n_1,...n_d)  \Big\}.
			\end{equation}
		Since the solution of  Problem \eqref{regularizedproblem} depends on two terms $\bn$ and  $\rho_{\bf n}$, we denote it by $\overline U_{\rho_{\bf n}, \beta_{\bf n}}$.
		
		\begin{theorem} \label{theorem3.1}
			Suppose that  $ \bn:= \beta(n_1,n_2,...n_d),~ \rho_{\bf n}:= \rho(n_1,n_2,...n_d)$ are such that
			\begin{equation}  \label{cond1}
			\lim_{ |{\bf n}| \to +\infty } \beta_{\bf n}=	\lim_{ |{\bf n}| \to +\infty } \rho_{\bf n}=+\infty , \quad 	\lim_{ |{\bf n}| \to +\infty } \frac{  e^{2T {\bf M}(\sqrt{\rho_{\bf n}})}  \beta_{\bf n}^{d/2} }{\prod_{k=1}^d  ( n_k)^{4\mu_k} }= 	\lim_{ |{\bf n}| \to +\infty }  e^{2T {\bf M}(\sqrt{\rho_{\bf n}})}  \beta_{\bf n}^{-\mu_0}=0,
			\end{equation}
			where we recall $|{\bf n}|=\sqrt{ \sum_{k=1}^d {n_k^2}}$. 	Assume that $H, G, H_{\bn}, G_{\bn}$ are  as in  Theorem \ref{theorem2.1}.
			  {  Suppose that  $F \in L^\infty (\mathbb{R})$ and  $F$ is a Lipschitz function, i.e. there exists a positive constant K such that
			  \begin{equation}\label{lipschitz-condition}
			  |F(\xi_1)- F(\xi_2)| \le K |\xi_1-\xi_2|,\quad \forall \xi_1, \xi_2 \in \mathbb{R}.
			  \end{equation}
			}
			The Problem \eqref{regularizedproblem} has a unique solution $\overline U_{\rho_{\bf n}, \beta_{\bf n}} \in C([0,T];L^2(\Omega))$ which  satisfies
			\begin{align}
				\overline U_{\rho_{\bf n}, \beta_{\bf n}}({\bf x},t)&=  	\sum_{{\bf p} \in  	\mathcal{W}_{  \rho_{\bf n}}  }  \Big[ e^{(T-t) {\bf M} (|{\bf p}|) }  \widehat H_{\beta_{\bf n},{\bf p}}  -\int_t^T  e^{(\tau-t) {\bf M} (|{\bf p}|)}  \widehat G_{\beta_{\bf n},{\bf p}}  (\tau)d\tau \Big] \psi_{\bf p}({\bf x})\nn\\
				&-\sum_{{\bf p} \in  	\mathcal{W}_{  \rho_{\bf n}} }  \Bigg[   \int_t^T  e^{(\tau-t) {\bf M} (|{\bf p}|)}   F_{\bf p}(	\overline U_{\rho_{\bf n}, \beta_{\bf n}})(\tau)d\tau \Bigg]\psi_{\bf p}({\bf x}). \label{resol}
			\end{align}
			{
				where
				$\widehat G_{\beta_{\bf n},{\bf p}}(t) = \left \langle  \widehat G_{\beta_{\bf n}}(.,t), \psi_{\bf p} \right \rangle$ and  	$\widehat H_{\beta_{\bf n},{\bf p}} = \left \langle  \widehat H_{\beta_{\bf n}}, \psi_{\bf p} \right \rangle$.
			}

			\begin{enumerate}[{ \bf \upshape(a)}]	
			{
				\item Assume that the problem \eqref{problem01111} has a unique solution ${\bf u} $ such that
				\begin{equation}
					\sum_{{\bf p} \in {\mathbb N}^d } e^{2t {\bf M}(|{\bf p}|)  }    {\bf u}_{\bf p}^2(t): = \widetilde A_1<\infty,~\text{for all}~t \in [0,T].
				\end{equation}
				Then as $|{\bf n}|\to \infty$, $	{\bf E}	\Big\| \overline U_{\rho_{\bf n}, \beta_{\bf n}}(.,t)-{\bf u}(.,t) \Big\|_{L^2(\Omega)}^2 \quad \text{ is of order }$
					\begin{equation}
					e^{-2t {\bf M}(\sqrt{\rho_{\bf n}})}  	\max \Bigg( \frac{ e^{2T {\bf M}(\sqrt{\rho_{\bf n}})}   \beta_{\bf n}^{d/2} }{\prod_{k=1}^d  ( n_k)^{4\mu_k} }, e^{2T {\bf M}(\sqrt{\rho_{\bf n}})}   \beta_{\bf n}^{- \mu_0} , 1 \Bigg)  .
					\end{equation}
				\item  Assume that the problem \eqref{problem01111} has unique solution  ${\bf u} $ such that
				\begin{equation}
					\sum_{{\bf p} \in {\mathbb N}^d }  \big |{\bf M}(|{\bf p}|) \big|^{\alpha} e^{2t {\bf M}(|{\bf p}|)  }    {\bf u}_{\bf p}^2(t):=\widetilde A_2<\infty,
				\end{equation}
				for any $\alpha>0$ and $t \in [0,T]$.
				Then as $|{\bf n}|\to \infty$,
 ${\bf E}	\Big\| \overline U_{\rho_{\bf n}, \beta_{\bf n} }(.,t)-{\bf u}(.,t) \Big\|_{L^2(\Omega)}^2$ { is of order }
				\begin{equation}
				e^{-2t {\bf M}(\sqrt{\rho_{\bf n}})}  	\max \Bigg( \frac{ e^{2T {\bf M}(\sqrt{\rho_{\bf n}})}   \beta_{\bf n}^{d/2} }{\prod_{k=1}^d  ( n_k)^{4\mu_k} },  e^{2T {\bf M}(\sqrt{\rho_{\bf n}})}   \beta_{\bf n}^{- \mu_0}, \Big|{\bf M}(\sqrt{\rho_{\bf n}})  \Big|^{-2\al}  \Bigg).
				\end{equation}
				for all $t \in [0,T]$.
				\item  Assume that the problem \eqref{problem01111} has a unique solution  ${\bf u} $ such that
				\begin{equation}
					\sum_{{\bf p} \in {\mathbb N}^d }   e^{2(t+\delta) {\bf M }(|{\bf p}|)   }    {\bf u}_{\bf p}^2 =\widetilde A_3<\infty,
				\end{equation}
				for any real number $\delta \ge 0$ and $t \in [0,T].$
				Then as $|{\bf n}|\to \infty$,
 $	{\bf E}	\Big\| \overline U_{\rho_{\bf n}, \beta_{\bf n}}(.,t)-{\bf u}(.,t)  \Big\|_{L^2(\Omega)}^2$   is of order
				\begin{equation}
				e^{-2t {\bf M}(\sqrt{\rho_{\bf n}})}  	\max \Bigg( \frac{ e^{2T {\bf M}(\sqrt{\rho_{\bf n}})}   \beta_{\bf n}^{d/2} }{\prod_{k=1}^d  ( n_k)^{4\mu_k} },  e^{2T {\bf M}(\sqrt{\rho_{\bf n}})}   \beta_{\bf n}^{- \mu_0},  e^{-2\delta     {\bf M}(\sqrt{\rho_{\bf n}}) } \Bigg).
				\end{equation}
			 	
			}

			\end{enumerate}
		\end{theorem}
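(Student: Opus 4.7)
The plan is to exploit the fact that $\mathbb P_{\rho_{\bf n}}$ projects onto the finite-dimensional span of $\{\psi_{\bf p}:|{\bf p}|^2\le \rho_{\bf n}\}$, so that the regularized system \eqref{regularizedproblem} decouples mode by mode. For $|{\bf p}|^2\le \rho_{\bf n}$ the ${\bf p}$-th Fourier coefficient satisfies the scalar backward linear ODE
$U_{\bf p}'(t)+{\bf M}(|{\bf p}|)U_{\bf p}(t)=F_{\bf p}(\overline U_{\rho_{\bf n},\beta_{\bf n}})(t)+\widehat G_{\beta_{\bf n},{\bf p}}(t)$
with terminal value $\widehat H_{\beta_{\bf n},{\bf p}}$, while for $|{\bf p}|^2>\rho_{\bf n}$ the projection kills both the forcing and the terminal datum so $U_{\bf p}\equiv 0$. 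Multiplying by the integrating factor $e^{\tau{\bf M}(|{\bf p}|)}$ and integrating from $t$ to $T$ gives exactly \eqref{resol}. Existence and uniqueness in $C([0,T];L^2(\Omega))$ then follow from the Banach fixed point theorem applied to the map sending ${\bf w}$ to the right-hand side of \eqref{resol}, equipped with the weighted norm $\|{\bf w}\|_{*}:=\sup_{t\in[0,T]}e^{-\theta(T-t)}\|{\bf w}(\cdot,t)\|_{L^2(\Omega)}$ for $\theta$ chosen sufficiently large (depending on $K$ and $e^{T{\bf M}(\sqrt{\rho_{\bf n}})}$).

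\textbf{Error decomposition on the projected subspace.} Set ${\bf v}(\cdot,t):=\overline U_{\rho_{\bf n},\beta_{\bf n}}(\cdot,t)-\mathbb P_{\rho_{\bf n}}{\bf u}(\cdot,t)$, which is supported on the low-frequency subspace. Subtracting \eqref{resol3} from \eqref{resol} mode by mode for $|{\bf p}|^2\le \rho_{\bf n}$ yields
\[
{\bf v}_{\bf p}(t)=e^{(T-t){\bf M}(|{\bf p}|)}\bigl(\widehat H_{\beta_{\bf n},{\bf p}}-H_{\bf p}\bigr)-\int_t^T e^{(\tau-t){\bf M}(|{\bf p}|)}\bigl(\widehat G_{\beta_{\bf n},{\bf p}}-G_{\bf p}\bigr)(\tau)\,d\tau-\int_t^T e^{(\tau-t){\bf M}(|{\bf p}|)}\bigl(F_{\bf p}(\overline U_{\rho_{\bf n},\beta_{\bf n}})-F_{\bf p}({\bf u})\bigr)(\tau)\,d\tau.
\]
Using $(a+b+c)^2\le 3(a^2+b^2+c^2)$, Cauchy--Schwarz on the time integrals, the monotonicity ${\bf M}(|{\bf p}|)\le {\bf M}(\sqrt{\rho_{\bf n}})$ on the projected subspace, Parseval's identity, and the Lipschitz hypothesis \eqref{lipschitz-condition} in the form $\sum_{\bf p}|F_{\bf p}(\overline U)-F_{\bf p}({\bf u})|^2\le K^2\|\overline U-{\bf u}\|_{L^2}^2$, I obtain
\[
\|{\bf v}(\cdot,t)\|_{L^2}^2\le 3\,e^{2(T-t){\bf M}(\sqrt{\rho_{\bf n}})}\Bigl(\|\widehat H_{\beta_{\bf n}}-H\|_{L^2}^2+T\|\widehat G_{\beta_{\bf n}}-G\|_{L^\infty(0,T;L^2)}^2\Bigr)+3TK^2\int_t^T e^{2(\tau-t){\bf M}(\sqrt{\rho_{\bf n}})}\,\|\overline U_{\rho_{\bf n},\beta_{\bf n}}(\cdot,\tau)-{\bf u}(\cdot,\tau)\|_{L^2}^2\,d\tau.
\]

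\textbf{Weighted Gronwall and the three tail estimates.} By orthogonality $\|\overline U_{\rho_{\bf n},\beta_{\bf n}}-{\bf u}\|_{L^2}^2=\|{\bf v}\|_{L^2}^2+\mathcal T(t)$ with $\mathcal T(t):=\sum_{|{\bf p}|^2>\rho_{\bf n}}{\bf u}_{\bf p}^2(t)$. Taking expectations, dividing by $e^{2(T-t){\bf M}(\sqrt{\rho_{\bf n}})}$, and applying Gronwall's lemma to the auxiliary function $\Psi(t):=e^{-2(T-t){\bf M}(\sqrt{\rho_{\bf n}})}{\bf E}\|{\bf v}(\cdot,t)\|_{L^2}^2$ (whose Gronwall coefficient reduces to the $\rho_{\bf n}$-independent $3TK^2$), together with Corollary~\ref{corollary2.1} for the data error, yields
\[
{\bf E}\|\overline U_{\rho_{\bf n},\beta_{\bf n}}(\cdot,t)-{\bf u}(\cdot,t)\|_{L^2}^2\le C(K,T)\,e^{2(T-t){\bf M}(\sqrt{\rho_{\bf n}})}\,\max\!\Bigl(\tfrac{\beta_{\bf n}^{d/2}}{\prod_{k=1}^{d}n_k^{4\mu_k}},\;\beta_{\bf n}^{-\mu_0}\Bigr)+\mathcal T(t).
\]
Each a priori assumption controls the tail through the monotonicity of ${\bf M}$: in case (a), $\mathcal T(t)\le e^{-2t{\bf M}(\sqrt{\rho_{\bf n}})}\widetilde A_1$ because $e^{-2t{\bf M}(|{\bf p}|)}\le e^{-2t{\bf M}(\sqrt{\rho_{\bf n}})}$ whenever $|{\bf p}|\ge \sqrt{\rho_{\bf n}}$; in (b) the same argument produces the additional factor $|{\bf M}(\sqrt{\rho_{\bf n}})|^{-2\alpha}$; in (c) the extra exponential $e^{-2\delta{\bf M}(\sqrt{\rho_{\bf n}})}$ appears. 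Factoring $e^{-2t{\bf M}(\sqrt{\rho_{\bf n}})}$ out of both summands produces exactly the three orders stated in (a)--(c).

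\textbf{Main obstacle.} The delicate step is preventing the Lipschitz constant $K$ from interacting multiplicatively with $e^{T{\bf M}(\sqrt{\rho_{\bf n}})}$ in the Gronwall exponent. A naive application of Gronwall's lemma to ${\bf E}\|\overline U-{\bf u}\|_{L^2}^2$ would yield a constant of the form $\exp\!\bigl(3T^2K^2\,e^{2T{\bf M}(\sqrt{\rho_{\bf n}})}\bigr)$, which destroys the convergence enforced by \eqref{cond1}. The weighted substitution $\Psi(t)$ absorbs the exponential amplification into the inhomogeneous (data and tail) terms and leaves the Gronwall constant as the harmless $e^{3T^2K^2}$; arranging this weighting cleanly, while simultaneously splitting $\overline U-{\bf u}$ into ${\bf v}$ and the orthogonal high-frequency tail $\mathcal T(t)$, is the technical heart of the argument.
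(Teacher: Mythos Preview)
Your proof is correct and follows essentially the same route as the paper: expand in Fourier modes, bound the exponential amplification by ${\bf M}(\sqrt{\rho_{\bf n}})$ on the truncated subspace, and apply a weighted Gronwall inequality (the paper multiplies by $e^{2t{\bf M}(\sqrt{\rho_{\bf n}})}$, which is your substitution $\Psi(t)$ up to the constant factor $e^{2T{\bf M}(\sqrt{\rho_{\bf n}})}$) so that the Gronwall constant is $\rho_{\bf n}$-independent. The only organizational difference is that you split $\overline U_{\rho_{\bf n},\beta_{\bf n}}-{\bf u}$ orthogonally into the low-frequency part ${\bf v}$ and the tail $\mathcal T(t)$, whereas the paper keeps the full difference and uses a four-term inequality $(a+b+c+d)^2\le 4(a^2+b^2+c^2+d^2)$; your decomposition is slightly cleaner but the substance is identical.
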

		
		\begin{proof}[{\bf Proof of Theorem \ref{theorem3.1}}]
			We divide the proof into some  smaller parts.\\
			{\bf Part 1.} {\it 	The nonlinear integral equation \eqref{resol} has unique solution $	\overline U_{\rho_{\bf n}} \in C([0,T];L^2(\Omega))$.}\\
			The proof is similar to \cite{Tuan}( { See Theorem 3.1, page 2975 \cite{Tuan2011}}). Hence, we omit it here. \\
			{\bf Part 2.} {\it The error estimate
				$
				{\bf E}	\Big\| \overline U_{\rho_{\bf n}, \beta_{\bf n}}-{\bf u} \Big\|_{L^2}.
				$}

			First, using Parseval's identity, equations \eqref{resol3}, and \eqref{resol} we get
			\begin{align}
				\Big\| \overline U_{\rho_{\bf n}, \beta_{\bf n}}-{\bf u} \Big\|_{L^2(\Omega)}^2& \le  4 	\sum_{{\bf p} \in   \mathcal{W}_{\rho_{\bf n}} } \Bigg[ e^{(T-t) {\bf M} (|{\bf p}|)}  \left(\widehat H_{\beta_{\bf n},{\bf p}}-H_{\bf p} \right)   \Bigg]^2  \nn\\
				& +  4 	\sum_{{\bf p} \in  	\mathcal{W}_{  \rho_{\bf n}} } \Bigg[ \int_t^T e^{(\tau-t) {\bf M} (|{\bf p}|) }  \left(\widehat G_{\beta_{\bf n},{\bf p}}(\tau)-G_{\bf p}(\tau)d\tau \right)   \Bigg]^2  \nn\\
				&+4 \sum_{{\bf p} \in  	\mathcal{W}_{  \rho_{\bf n}} }  \Bigg[  \int_t^T e^{(\tau-t) {\bf M} (|{\bf p}|)}    \left(F_{\bf p}(\overline U_{\rho_{\bf n}, \beta_{\bf n}}	)- F_{\bf p}({\bf u})(s) \right)ds \Bigg]^2\nn\\
				&+ 4\sum_{{\bf p} \notin  	\mathcal{W}_{  \rho_{\bf n}} } \big| {\bf u}_{\bf p} \big|^2. \label{er1}
			\end{align}
			Using the Cauchy-Schwartz inequality, the expectation of the right hand side of \eqref{er1} is bounded by
			\begin{align}
				&{\bf E}	\Big\| \overline U_{\rho_{\bf n}, \beta_{\bf n}}-{\bf u} \Big\|_{L^2(\Omega)}^2\nn\\
				& \le  4  e^{2(T-t) {\bf M }(\sqrt{\rho_{\bf n}})} {\bf E}\left(	\sum_{{\bf p} \in  	\mathcal{W}_{  \rho_{\bf n}} }
				\Big[ \left(\widehat H_{\beta_{\bf n},{\bf p}}-H_{\bf p} \right)   \Big]^2   \right)\nn\\
				& +  4  \int_t^T e^{2(\tau-t){\bf M}(\sqrt{\rho_{\bf n}})  }d\tau  {\bf E}\left(	\sum_{{\bf p} \in  	\mathcal{W}_{  \rho_{\bf n}} } \Big[ \int_t^T  \left(\widehat G_{\beta_{\bf n},{\bf p}}(\tau)-G_{\bf p}(\tau)\right)^2d\tau    \Big] \right) \nn\\
				&+4(T-t) \int_t^T e^{2(\tau-t) {\bf M}(\sqrt{\rho_{\bf n}})   }   \sum_{{\bf p} \in  	\mathcal{W}_{  \rho_{\bf n}} }  {\bf E}\left(\Big[   \left(F_{{\bf p}}(\overline U_{\rho_{\bf n}, \beta_{\bf n}}	)(\tau)- F_{\bf p}({\bf u})(\tau) \right) \Big]^2 \right)ds\nn\\
				&+4\sum_{{\bf p} \notin  	\mathcal{W}_{  \rho_{\bf n}} } \big| {\bf u}_{\bf p} \big|^2.
			\end{align}
			It  follows from the Lipschitz property of $F$ that
			\begin{align}
				{\bf E}	\Big\| \overline U_{\rho_{\bf n}, \beta_{\bf n}}-{\bf u} \Big\|_{L^2(\Omega)}^2 &\le  4 e^{2(T-t) {\bf M}(\sqrt{\rho_{\bf n}})}  {\bf E}	\Big\| \widehat H_{\beta_{\bf n}}-H \Big\|_{L^2(\Omega)}^2\nn\\
				&+4 T  e^{2(T-t) {\bf M}(\sqrt{\rho_{\bf n}})}  {\bf E}	\Big\| \widehat G_{\beta_{\bf n}}-G \Big\|_{  L^\infty( 0,T; L^2(\Omega))  }^2\nn\\
				&+4 T K^2 \int_t^T  e^{2(\tau-t) {\bf M}(\sqrt{\rho_{\bf n}})}   {\bf E}	\Big\| \overline U_{\rho_{\bf n}, \beta_{\bf n} }(.,\tau)-{\bf u} (.,\tau)\Big\|_{L^2(\Omega)}^2d\tau \nn\\
				& +4\sum_{{\bf p } \notin  	\mathcal{W}_{  \rho_{\bf n}} } \big| {\bf u}_{\bf p} \big|^2.\label{exp-ineq-difference}
			\end{align}
			Now, we deal with the  three cases.\\
			{\bf Case 1}. Assume that the series
			\begin{align}
				\sum_{{\bf p} \in {\mathbb N}^d } e^{2t {\bf M}(|{\bf p}|)  }  {\bf u}_{\bf p}^2
			\end{align}
			converges  to $\widetilde A_1$. Then
			multiplying both sides of the  inequality \eqref{exp-ineq-difference} by $e^{2t {\bf M}(\sqrt{\rho_n})} $, we obtain that
			\begin{eqnarray}
			\begin{aligned}
				&e^{2t {\bf M }(\sqrt{\rho_{\bf n} })}   {\bf E}	\Big\| \overline U_{\rho_{\bf n}, \beta_{\bf n} }(.,t)-{\bf u}(.,t) \Big\|_{L^2(\Omega)}^2\nn\\
				 &\le  4  e^{2T {\bf M }(\sqrt{\rho_{\bf n}})}  \left( {\bf E}	\Big\| \widehat H_{\beta_{\bf n}}-H \Big\|_{L^2(\Omega)}^2+T {\bf E}	\Big\| \widehat G_{\beta_{\bf n}}-G \Big\|_{L^\infty(0,T;L^2(\Omega))}^2\right)+4 \widetilde A_1\nn\\
				&+4 T K^2 \int_t^T  e^{2\tau {\bf M}(\sqrt{\rho_{\bf n}})}     {\bf E}	\Big\| \overline U_{\rho_{\bf n}, \beta_{\bf n}}(.,\tau)-{\bf u} (.,\tau)\Big\|_{L^2(\Omega)}^2d\tau,
			\end{aligned}
			\end{eqnarray}
			where we used the fact  that $$\sum_{{\bf p} \notin  	\mathcal{W}_{  \rho_{\bf n}}} \big| {\bf u}_{\bf p} \big|^2= \sum_{{\bf p} \notin  \mathcal{W}_{  \rho_{\bf n}} }   e^{-2t {\bf M}(|{\bf p}|)  } e^{2t {\bf M}(|{\bf p}|)  }  \big| {\bf u}_{\bf p} \big|^2 \le 	e^{-2t  {\bf M }(\sqrt{\rho_{\bf n}})}   \widetilde A_1.$$
			Above we used the monotone  increasing property of ${\bf M}$.
			{ 
			
			Using the above Gronwall's inequality, } we obtain that
			\begin{align}
				&	e^{2t {\bf M}(\sqrt{\rho_{\bf n}})}    {\bf E}	\Big\| \overline U_{\rho_{\bf n}, \beta_{\bf n}}-{\bf u} \Big\|_{L^2(\Omega)}^2 \nn\\
				&\le  4  	e^{2T {\bf M }(\sqrt{\rho_{\bf n}})}  \left( {\bf E}	\Big\| \widehat H_{\beta_{\bf n}}-H \Big\|_{L^2(\Omega)}^2+T {\bf E}	\Big\| \widehat G_{\beta_{\bf n}}-G \Big\|_{L^\infty(0,T;L^2(\Omega))}^2\right) e^{4TK^2(T-t)}\nn\\
				&+ 4 \widetilde A_1 e^{4TK^2(T-t)}.
			\end{align}
			This implies that
			\begin{align}
				&  {\bf E}	\Big\| \overline U_{\rho_{\bf n}, \beta_{\bf n}}-{\bf u} \Big\|_{L^2(\Omega)}^2 \nn\\
				&\le \underbrace{ 4 e^{4TK^2(T-t)}}_{C5^2} 	e^{2(T-t) {\bf M}(\sqrt{\rho_{\bf n}})}  \left( {\bf E}	\Big\| \widehat H_{\beta_{\bf n}}-H \Big\|_{L^2(\Omega)}^2+T {\bf E}	\Big\| \widehat G_{\beta_{\bf n}}-G \Big\|_{L^\infty(0,T;L^2(\Omega))}^2\right) \nn\\
				&+ 4 \widetilde A_1 e^{4TK^2(T-t)} 	e^{-2t {\bf M}(\sqrt{\rho_{\bf n}})} .
			\end{align}
	{
			  It follows from Corollary \eqref{corollary2.1} that $	{\bf E}	\Big\| \overline U_{\rho_{\bf n}, \beta_{\bf n}}(.,t)-{\bf u}(.,t) \Big\|_{L^2(\Omega)}^2 $ \text{ is of order }
			\begin{equation}
			 	e^{-2t {\bf M}(\sqrt{\rho_{\bf n}})}  	\max \Bigg( \frac{ e^{2T {\bf M}(\sqrt{\rho_{\bf n}})}   \beta_{\bf n}^{d/2} }{\prod_{k=1}^d  ( n_k)^{4\mu_k} }, e^{2T {\bf M}(\sqrt{\rho_{\bf n}})}   \beta_{\bf n}^{- \mu_0} , 1 \Bigg)  .
			\end{equation}
				}
			
			{\bf Case 2}. Suppose  that the series
			\begin{align}
				\sum_{{\bf p} \in {\mathbb N}^d }  \Big|{ {\bf M }(|\bf p|)}\Big|^{2\alpha} e^{2t {\bf M }(|\bf p|) }  {\bf u}_{\bf p}^2
			\end{align}
			converges to $\widetilde A_2$.	By a similar technique as in case 1 above  and using the following estimate
			\begin{align}
				\sum_{{\bf p} \notin  \mathcal{W}_{  \rho_{\bf n} }  } \big| {\bf u}_{\bf p} \big|^2&= \sum_{{\bf p} \notin  	\mathcal{W}_{  \rho_{\bf n}} }  \Big|{ {\bf M }(|\bf p|)}\Big|^{-2\alpha}     e^{-2t {\bf M }(|\bf p|) }    \Big|{ {\bf M}(|\bf p|)}\Big|^{2\alpha}    e^{2t {\bf M }(|\bf p|) }
				\big| {\bf u}_{\bf p} \big|^2 \nn\\
				&\le \Big|{\bf M}(\sqrt{\rho_{\bf n}})  \Big|^{-2\al}	e^{-2t {\bf M}(\sqrt{\rho_{\bf n}})  }  \widetilde A_2,
			\end{align}
			we deduce  that
			\begin{align}
				&  {\bf E}	\Big\| \overline U_{\rho_n, \beta_{\bf n} }-{\bf u} \Big\|_{L^2(\Omega)}^2 \nn\\
				&\le \underbrace{ 4 e^{4TK^2(T-t)}}_{C5^2}  e^{2(T-t) {\bf M}(\sqrt{\rho_{\bf n}})  }   \left( {\bf E}	\Big\| \widehat H_{\beta_{\bf n}}-H \Big\|_{L^2(\Omega)}^2+T {\bf E}	\Big\| \widehat G_{\beta_{\bf n}}-G \Big\|_{L^\infty(0,T;L^2(\Omega))}^2\right)\nn\\
				& + 4 \widetilde A_2 e^{4TK^2(T-t)}   \Big|{\bf M}(\sqrt{\rho_{\bf n}})  \Big|^{-2\al}	e^{-2t {\bf M}(\sqrt{\rho_{\bf n}})  }  .
			\end{align}
	{  It follows from Corollary \eqref{corollary2.1} that $	{\bf E}	\Big\| \overline U_{\rho_{\bf n}, \beta_{\bf n}}(.,t)-{\bf u}(.,t) \Big\|_{L^2(\Omega)}^2 $ \text{ is of order }
		\begin{equation}
		 	e^{-2t {\bf M}(\sqrt{\rho_{\bf n}})}  	\max \Bigg( \frac{ e^{2T {\bf M}(\sqrt{\rho_{\bf n}})}   \beta_{\bf n}^{d/2} }{\prod_{k=1}^d  ( n_k)^{4\mu_k} },  e^{2T {\bf M}(\sqrt{\rho_{\bf n}})}   \beta_{\bf n}^{- \mu_0}, \Big|{\bf M}(\sqrt{\rho_{\bf n}})  \Big|^{-2\al}  \Bigg).
		\end{equation}
	}
		
			\noindent {\bf Case 3}. Suppose  that the series
			$$
			\sum_{{\bf p} \in {\mathbb N}^d }   e^{2(t+\delta) {\bf M}(|{\bf p}|)   }    {\bf u}_p^2
			$$
			converges  to $\widetilde A_3$.	By a similar technique as in case 1 above  and using the following estimate
		\begin{align}
			\sum_{{\bf p} \notin  	\mathcal{W}_{  \rho_{\bf n}} } \big| {\bf u}_{\bf p} \big|^2&= \sum_{{\bf p} \notin  	\mathcal{W}_{  \beta_{\bf n}} }   e^{-2(t+\delta ) {\bf M}(|{\bf p}|)   }      e^{2(t+\delta) {\bf M}(|{\bf p}|)   }    {\bf u}_{\bf p}^2  \nn\\
			& \le  	e^{-2(t+\delta)     {\bf M}(\sqrt{\beta_{\bf n}})     } \widetilde A_3,
			\end{align}
			we deduce that
			\begin{eqnarray}
			\begin{aligned}
				&  {\bf E}	\Big\| \overline U_{\rho_{\bf n}, \beta_{\bf n}}-{\bf u} \Big\|_{L^2(\Omega)}^2 \nn\\
				&\le \underbrace{ 4 e^{4TK^2(T-t)}}_{C5^2}  e^{2(T-t) {\bf M}(\sqrt{\rho_{\bf n}})  }   \left( {\bf E}	\Big\| \widehat H_{\beta_{\bf n}}-H \Big\|_{L^2(\Omega)}^2+{\bf E}	\Big\| \widehat G_{\beta_{\bf n}}-G \Big\|_{L^\infty(0,T;L^2(\Omega))}^2\right)\nn\\
				& \quad \quad \quad + 4 \widetilde A_3 e^{4TK^2(T-t)}   e^{-2(t+\delta)     {\bf M}(\sqrt{\rho_{\bf n}})     } .	
			\end{aligned}
			\end{eqnarray}		
				{  It follows from Corollary \eqref{corollary2.1} that $   {\bf E}	\Big\| \overline U_{\rho_n, \beta_{\bf n}}-{\bf u} \Big\|_{L^2(\Omega)}^2 $ is of order
					\begin{equation}
					e^{-2t {\bf M}(\sqrt{\rho_{\bf n}})}  	\max \Bigg( \frac{ e^{2T {\bf M}(\sqrt{\rho_{\bf n}})}   \beta_{\bf n}^{d/2} }{\prod_{k=1}^d  ( n_k)^{4\mu_k} },  e^{2T {\bf M}(\sqrt{\rho_{\bf n}})}   \beta_{\bf n}^{- \mu_0},  e^{-2\delta     {\bf M}(\sqrt{\rho_{\bf n}}) } \Bigg).
					\end{equation}
				}
		\end{proof}

		\begin{remark} \label{remma3.1}
				We give one choice for  $ \beta_{\bf n}$  and $ \rho_{\bf n}$ which satifies \eqref{cond1}. Let  $0< 2\al_0 < \mu_0 $ and $e^{2T {\bf M}(\sqrt{\rho_{\bf n}})}  = \beta_{\bf n}^{2 \al_0}$.  Since  $ \frac{ e^{2T {\bf M}(\sqrt{\rho_{\bf n}})}   \beta_{\bf n}^{d/2} }{\prod_{k=1}^d  ( n_k)^{4\mu_k} } \to 0$ when $|{\bf n}| \to +\infty$, we  can choose $\beta_{\bf n}$ such that
				\begin{equation}
				\lim_{|{\bf n}| \to +\infty}  \frac{ \beta_{\bf n}^{2\al_0 +d/2} }{ \prod_{k=1}^d  ( n_k)^{4\mu_k}    }=0.
				\end{equation}
				Let us choose 	$\beta_{\bf n}^{2\al_0 +d/2}  =\prod_{k=1}^d  n_k $ then $\beta_{\bf n} = \left( \prod_{k=1}^d  n_k  \right)^{\frac{1}{ 2 \al_0+d/2 }}$
				and then we choose $ \rho_{\bf n}$ such that
				\begin{equation}
				{\bf M}  (  \sqrt{\rho_{\bf n}} )= \frac{\al_0 }{T} \log \left(  \beta_{\bf n}\right)= \frac{\al_0 }{T (2 \al_0+d/2)} \log \left(  \prod_{k=1}^d  n_k \right).
				\end{equation}
				In above theorem, for  the case (b),  $	{\bf E}	\Big\| \overline U_{\rho_{\bf n}, \beta_{\bf n}}(.,t)-{\bf u}(.,t) \Big\|_{L^2(\Omega)}^2 $ \text{ is of order }
				\begin{equation}
				\left( \prod_{k=1}^d  n_k  \right)^{\frac{-4\al_0 t}{ 4 T\al_0+dT }}  	\max \Bigg( \frac{1 }{\prod_{k=1}^d  ( n_k)^{4\mu_k-1} }, \left( \prod_{k=1}^d  n_k  \right)^{\frac{2\al_0 - \mu_0}{ 2 \al_0+d/2 }} ,      \log^{-2\al} \left(  \prod_{k=1}^d  n_k \right) \Bigg).
				\end{equation}
				In above theorem, for the  case (c),   $	{\bf E}	\Big\| \overline U_{\rho_{\bf n}, \beta_{\bf n}}(.,t)-{\bf u}(.,t) \Big\|_{L^2(\Omega)}^2 $ \text{ is of order }
				\begin{equation}
				\left( \prod_{k=1}^d  n_k  \right)^{\frac{-4\al_0 t}{ 4 T\al_0+dT }}  	\max \Bigg( \frac{1 }{\prod_{k=1}^d  ( n_k)^{4\mu_k-1} }, \left( \prod_{k=1}^d  n_k  \right)^{\frac{2\al_0 - \mu_0}{ 2 \al_0+d/2 }} ,
				\left( \prod_{k=1}^d  n_k  \right)^{\frac{-4\al_0 \delta}{ 4 T\al_0+dT }}\Bigg).
				\end{equation}
				
			\end{remark}

	    \section{The backward problem  for parabolic equation with time dependent coefficients}\label{section4}
					\setcounter{equation}{0}
					\subsection{The problem with  coefficients that  depend only  on $t$}

					\noindent  In this section, we consider  the problem of constructing a solution  ${\bf u} \in C([0,T;\mathcal{V}(\Omega)]), {\bf u}'\in L^2(0,T;L^2(\Omega))$  such that
					${\bf u}$ satisfies the following parabolic equation with time dependent coefficients
					\bq
					\left\{ \begin{gathered}
						{\bf u}_t+\A(t){\bf u}  = F({\bf u}({\bf x},t))+G({\bf x},t) ,~~{\bf x} \in  \Omega, 0<t<T, \hfill \\
						{\bf u}({\bf x},t)= 0,~~{\bf x} \in \partial \Omega,\hfill\\
						{\bf u}({\bf x},T)= H({\bf x}), \quad {\bf x} \in  \Omega \hfill\\
					\end{gathered}  \right. \label{problem011}
					\eq
					where $H  \in L^2(\Omega)$.
					Here   $\mathcal V (\Omega)  \overset d \hookrightarrow L^2(\Omega)$;     i.e.,   ${\mathcal V}(\Omega)\subset  L^2(\Omega) $   is   continuously embedded    into $L^2(\Omega)$.  It  means that  there exists some constant $ m_0 >0$ such that for all $v\in  {\mathcal V}(\Omega)$
					\begin{equation}
						\|v\|_{L^2(\Omega)} \le m_0 	\|v\|_{{\mathcal V}(\Omega)}.
					\end{equation}
					Then $ L^2(\Omega)  \overset d \hookrightarrow  {\mathcal V}'(\Omega) $ via  $v\mapsto \big<v \, \vert \, \cdot \big>_{L^2(\Omega)}$, where  ${\mathcal V}'(\Omega)$  denotes the { dual   space of ${\mathcal V}(\Omega) $.}
					In this section, we assume that the source function
					$F: \mathbb{R} \to
					\mathbb{R}$ is a locally Lipschitz function i.e,  for each $Q>0$ and
					for any $u$, $v$ satisfying $|u|, |v| \leqslant Q$, there holds
					\begin{equation}\label{dkff}
						\left| {F(u) - F(v)} \right|
						\leqslant K_F(Q)\left| {u - v} \right|,
					\end{equation}
					where
					\begin{equation} \label{dkff1}
						K(Q): =
						\sup \left\{ {\left| {\frac{{F(u) - F(v)}}{{u - v}}}
							\right|:\left| u \right|,\left| v \right|
							\leqslant Q,u \ne v}
						\right\} <  + \infty.
					\end{equation}
					We note that the function $Q\to K(Q)$ is increasing and $\mathop {\lim }\limits_{Q \to
						+ \infty } K(Q) =  + \infty $.
					
					First, we give the following definition
					
					\begin{definition} \label{asumption1111}
						The pair of operators $( \A(t), {\bf P} ) $ satisfies Assumption $(A)$ if  	the following conditions hold
						\begin{enumerate}[{ \upshape(a)}]
							
							\item  For any $v\in L^2(\Omega)$, there exists an increasing function ${\overline  M} : \mathbb{R} \to \mathbb{R}^+$ such as	
							\begin{equation}
								{\bf P} v({\bf x},t)= \sum_{{\bf p} \in {\mathbb N}^d}  {\overline  M} (|{\bf p}|)\big\langle v, \psi_{\bf p}\big\rangle _ {L^2(\Omega)}\psi_{\bf p}({\bf x}),\quad v \in L^2(\Omega).	\label{asumption222}
							\end{equation}
							
							\item For $ u \in {\mathcal V}(\Omega), v \in {\mathcal V}(\Omega) $
							\begin{equation}
								\Big\langle ({\bf P}-\A(t))u,v \Big\rangle_{L^2(\Omega)} \le \widetilde  M_a \|u\|_{{\mathcal V}(\Omega)} \|v\|_{{\mathcal V}(\Omega) }, \label{asumption000}
							\end{equation}
							for some  constant $\widetilde M_a>0$.
							\item For any $v\in {\mathcal V}(\Omega) $, there exists $\widehat M>0$ such that
							\begin{equation}
								\Big\langle ({\bf P}-\A(t))v,v \Big\rangle_{L^2(\Omega)}\geq \widehat M \Vert v\Vert_{{\mathcal V}(\Omega)}^2. \label{asumption111}
							\end{equation}

						\end{enumerate}

					\end{definition}

					Now we state the following lemma  concerning an  estimate of ${\bf P}_{\rho_{\bf n}}$.
					\begin{lemma}  \label{lemma4.1}
						Let ${\bf P}_{\rho_{\bf n}}  $ be defined as follows
						\begin{equation}\label{op:p-rho}
							{\bf P}_{\rho_{\bf n}}  v({\bf x})=\sum_{ {\bf p} \in  \mathcal{W}_{  \rho_{\bf n}}  }  {\overline  M } (|{\bf p}|)\big\langle v, \psi_{\bf p}\big\rangle _{L^2(\Omega)}\psi_{\bf p}({\bf x}),\quad \text {for all} \quad v\in L^2(\Omega).
						\end{equation}
						The operator  ${\bf P}_{\rho_{\bf n} }$ is a linear, bounded operator, and satisfies that
						\begin{equation}
							\|{\bf P}_{\rho_{\bf n}}\|_{\mathbb{L} (L^2(\Omega);L^2(\Omega)) } \le   {\overline  M } (\sqrt{\rho_{\bf n}}),
						\end{equation}
						{ where  ${\mathbb{L} (L^2(\Omega);L^2(\Omega)) }$ is the space of all bounded linear operators from $L^2(\Omega)$ to $L^2(\Omega)$}.
					\end{lemma}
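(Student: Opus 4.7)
The plan is to verify the two assertions separately: linearity and the operator norm bound. Linearity of ${\bf P}_{\rho_{\bf n}}$ is immediate from the definition, since it is a finite linear combination (indexed by ${\bf p}\in\mathcal{W}_{\rho_{\bf n}}$) of the bounded linear functionals $v\mapsto \langle v,\psi_{\bf p}\rangle_{L^2(\Omega)}$ multiplied by the fixed scalars $\overline M(|{\bf p}|)$ and the fixed basis elements $\psi_{\bf p}$. So the real content is the quantitative norm bound.

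For the norm bound, the key observation is that $\{\psi_{\bf p}\}_{{\bf p}\in\mathbb{N}^d}$ is an orthonormal basis of $L^2(\Omega)$, so Parseval's identity applies. I would compute
\begin{equation*}
\|{\bf P}_{\rho_{\bf n}} v\|_{L^2(\Omega)}^2 \;=\; \sum_{{\bf p}\in\mathcal{W}_{\rho_{\bf n}}} \bigl|\overline M(|{\bf p}|)\bigr|^2 \bigl|\langle v,\psi_{\bf p}\rangle_{L^2(\Omega)}\bigr|^2 ,
\end{equation*}
then use that $\overline M$ is increasing together with the defining condition $|{\bf p}|^2\le \rho_{\bf n}$ for ${\bf p}\in\mathcal{W}_{\rho_{\bf n}}$ to replace $\overline M(|{\bf p}|)$ by its uniform upper bound $\overline M(\sqrt{\rho_{\bf n}})$. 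Factoring this constant out gives
\begin{equation*}
\|{\bf P}_{\rho_{\bf n}} v\|_{L^2(\Omega)}^2 \;\le\; \bigl|\overline M(\sqrt{\rho_{\bf n}})\bigr|^2 \sum_{{\bf p}\in\mathcal{W}_{\rho_{\bf n}}} \bigl|\langle v,\psi_{\bf p}\rangle_{L^2(\Omega)}\bigr|^2 \;\le\; \bigl|\overline M(\sqrt{\rho_{\bf n}})\bigr|^2\,\|v\|_{L^2(\Omega)}^2,
\end{equation*}
where the last inequality is again Parseval (extending the partial sum over $\mathcal{W}_{\rho_{\bf n}}$ to the full Fourier series). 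Taking square roots yields the claimed bound, and in particular boundedness of the operator.

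There is essentially no obstacle here; the statement is a direct consequence of orthonormality and monotonicity of $\overline M$. The only point worth being careful about is writing $\mathcal{W}_{\rho_{\bf n}}$ as a finite set so that the rearrangements and finite Parseval arguments are rigorous: since $\mathcal{W}_{\rho_{\bf n}}$ is cut off by $|{\bf p}|^2\le \rho_{\bf n}$, it is finite, and so ${\bf P}_{\rho_{\bf n}}$ is in fact a finite-rank projection-type operator onto $\mathrm{span}\{\psi_{\bf p}:{\bf p}\in\mathcal{W}_{\rho_{\bf n}}\}$ weighted by $\overline M(|{\bf p}|)$, making all sums absolutely convergent. This finiteness also makes the linearity claim trivial to verify term by term.
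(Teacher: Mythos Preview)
Your proof is correct and follows exactly the same approach as the paper: compute $\|{\bf P}_{\rho_{\bf n}}v\|_{L^2(\Omega)}^2$ via Parseval, use monotonicity of $\overline M$ on $\mathcal{W}_{\rho_{\bf n}}$ to pull out $|\overline M(\sqrt{\rho_{\bf n}})|^2$, and bound the remaining partial sum by $\|v\|_{L^2(\Omega)}^2$. The paper's proof is just these inequalities written out without the additional commentary on linearity and finiteness.
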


					\begin{proof}	
						For $v \in L^2(\Omega)$, since $\overline M$ is a non-decreasing function, we have
						\begin{align}
							\|{\bf P}_{\rho_{\bf n}}v\|_{L^2(\Omega)}^2& \quad =\quad \quad \sum_{{\bf p} \in   \mathcal{W}_{  \rho_{\bf n}} }  \Big| {\overline  M }(|{\bf p}|) \Big|^2\big|\big\langle v, \psi_{\bf p}\big\rangle _{L^2(\Omega)}^2 \nn\\
							&\quad  \le \quad  \Big|{\overline  M } (\sqrt{\rho_{\bf n}}) \Big|^2 \sum_{{\bf p} \in \mathcal{W}_{  \rho_{\bf n}}}  \big\langle v, \psi_{\bf p}\big\rangle _{L^2(\Omega)}^2   \le\quad   \Big|{\overline  M } (\sqrt{\rho_{\bf n}}) \Big|^2 \|v\|_{L^2(\Omega)}^2. \label{inequality1}
						\end{align}
					\end{proof}
					
					\begin{definition} \label{def4.2}
						Define a subspace of $L^2(\Omega)$ as follows
						\begin{equation}
							\mathcal{G}_\gamma(\Omega):= \Bigg\{ v\in L^2(\Omega):  \sum_{ {\bf p}  \in {\mathbb N}^d}  \Big| {\overline  M }(|{\bf p}|) \Big|^{2\gamma} e^{2T {\overline  M} (|{\bf p}|)} \big\langle v, \psi_{\bf p}\big\rangle _{L^2(\Omega)}^2  <\infty  \Bigg\}
						\end{equation}
						for any $\gamma \ge 0$. The norm of  $v \in \mathcal{G}_\gamma(\Omega)$  is given by
						\begin{equation}
							\|v\|_{\mathcal{G}_\gamma(\Omega)}= \sqrt{\sum_{{\bf p}  \in {\mathbb N}^d}  \Big| {\overline  M}(|{\bf p}|) \Big|^{2\gamma} e^{2T {\overline  M} (|{\bf p}|)} \big\langle v, \psi_{\bf p} \big\rangle _{L^2(\Omega)}^2 }.
						\end{equation}
					\end{definition}
					
					\begin{lemma}\label{lemma4.2}
						For $v \in \mathcal{G}_{1+\gamma}(\Omega),~~\gamma\ge 0$
						\begin{align}
							\|{\bf P}_{\rho_{\bf n}}v-{\bf P}v\| \le \Big| {\overline  M}(|{\sqrt{\rho_{\bf n}}}|)\Big|^{-\gamma}  e^{-T{\overline  M} (\sqrt{\rho_{\bf n}}) } \big\| v \big\|_{\mathcal{G}_{1+\gamma}(\Omega) }.
						\end{align}
					\end{lemma}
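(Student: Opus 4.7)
The plan is to expand both ${\bf P}v$ and ${\bf P}_{\rho_{\bf n}}v$ in the orthonormal basis $\{\psi_{\bf p}\}$, subtract to isolate the high-frequency tail, and then use a single algebraic splitting of the weight $|\overline{M}(|{\bf p}|)|^2$ combined with the monotonicity of $\overline{M}$ to pull out the advertised decay factor, leaving a tail sum that is bounded by the $\mathcal{G}_{1+\gamma}$-norm.

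First I would write, using \eqref{asumption222} and \eqref{op:p-rho},
\[
{\bf P}v - {\bf P}_{\rho_{\bf n}}v \;=\; \sum_{{\bf p}\notin \mathcal{W}_{\rho_{\bf n}}} \overline{M}(|{\bf p}|)\,\langle v,\psi_{\bf p}\rangle_{L^2(\Omega)}\,\psi_{\bf p}({\bf x}),
\]
so that by Parseval's identity,
\[
\|{\bf P}_{\rho_{\bf n}}v-{\bf P}v\|_{L^2(\Omega)}^2 \;=\; \sum_{{\bf p}\notin \mathcal{W}_{\rho_{\bf n}}} |\overline{M}(|{\bf p}|)|^{2}\,\langle v,\psi_{\bf p}\rangle_{L^2(\Omega)}^{2}.
\]

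Next, I would perform the trivial algebraic identity
\[
|\overline{M}(|{\bf p}|)|^{2} \;=\; \Bigl(|\overline{M}(|{\bf p}|)|^{-2\gamma}\,e^{-2T\overline{M}(|{\bf p}|)}\Bigr)\cdot\Bigl(|\overline{M}(|{\bf p}|)|^{2(1+\gamma)}\,e^{2T\overline{M}(|{\bf p}|)}\Bigr).
\]
The first factor is controlled uniformly on the tail: since $\overline{M}$ is non-decreasing and ${\bf p}\notin \mathcal{W}_{\rho_{\bf n}}$ means $|{\bf p}|>\sqrt{\rho_{\bf n}}$, we have $\overline{M}(|{\bf p}|)\ge \overline{M}(\sqrt{\rho_{\bf n}})$ and hence
\[
|\overline{M}(|{\bf p}|)|^{-2\gamma}\,e^{-2T\overline{M}(|{\bf p}|)}\;\le\; |\overline{M}(\sqrt{\rho_{\bf n}})|^{-2\gamma}\,e^{-2T\overline{M}(\sqrt{\rho_{\bf n}})}.
\]

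Plugging this bound back in and then extending the tail sum to all of $\mathbb{N}^d$ gives
\[
\|{\bf P}_{\rho_{\bf n}}v-{\bf P}v\|_{L^2(\Omega)}^2 \;\le\; |\overline{M}(\sqrt{\rho_{\bf n}})|^{-2\gamma}\,e^{-2T\overline{M}(\sqrt{\rho_{\bf n}})}\sum_{{\bf p}\in\mathbb{N}^d}|\overline{M}(|{\bf p}|)|^{2(1+\gamma)}\,e^{2T\overline{M}(|{\bf p}|)}\,\langle v,\psi_{\bf p}\rangle_{L^2(\Omega)}^{2},
\]
and by Definition \ref{def4.2} the remaining sum equals $\|v\|_{\mathcal{G}_{1+\gamma}(\Omega)}^{2}$. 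Taking square roots yields the claimed inequality. There is no genuine obstacle in this argument; the only decision to make is the splitting exponent, and $\gamma$ is dictated by the norm the conclusion is stated in, so the monotonicity of $\overline{M}$ does all the work.
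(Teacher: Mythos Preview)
Your proof is correct and follows essentially the same approach as the paper: Parseval on the tail ${\bf p}\notin\mathcal{W}_{\rho_{\bf n}}$, the same multiplicative splitting $|\overline{M}(|{\bf p}|)|^2 = |\overline{M}(|{\bf p}|)|^{-2\gamma}e^{-2T\overline{M}(|{\bf p}|)}\cdot|\overline{M}(|{\bf p}|)|^{2+2\gamma}e^{2T\overline{M}(|{\bf p}|)}$, and the monotonicity of $\overline{M}$ to pull out the decay factor before bounding by the $\mathcal{G}_{1+\gamma}$-norm.
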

					\begin{proof}
						We have
						\begin{align*}
							\|{\bf P}_{\rho_{\bf n} }v-{\bf P}v\|_{L^2(\Omega)}^2&=\sum_{{\bf p} \notin   \mathcal{W}_{  \rho_{\bf n}} }  \Big| {\overline  M }(|{\bf p}|) \Big|^2\big|\big\langle v, \psi_{\bf p}\big\rangle _{L^2(\Omega)}^2 \nn\\
							&= ~\sum_{{\bf p} \notin  \mathcal{W}_{  \rho_{\bf n}}   } \Big| {\overline  M}(|{\bf p}|)\Big|^{-2\gamma} e^{-2T {\overline  M } (|{\bf p}|)}    \Big| {\overline  M }(|{\bf p}|) \Big|^{2+2\gamma}  e^{2T {\overline  M } (|{\bf p}|)} \big\langle v, \psi_{\bf p}\big\rangle _{L^2(\Omega)}^2 \nn\\
							&\le ~\Big| {\overline  M }(|{\sqrt{\rho_{\bf n}}}|)\Big|^{-2\gamma}  e^{-2T{\overline M } (\sqrt{\rho_{\bf n}}) }  \big\| v \big\|^2_{ \mathcal{G}_{1+\gamma}(\Omega) }.
						\end{align*}
					\end{proof}
					
					\subsubsection{\bf The regularized solution and convergence rates}	
					Since ${\bf P}$ is an unbounded operator on $L^2(\Omega)$, we
					approximate it by  the following operator ${\bf P}_{\rho_{\bf n}}$ defined above in equation \eqref{op:p-rho}.
					Since  $\A(t)$ is an unbounded operator, we approximate it by a new approximate operator $ \A(t)-{\bf P}+{\bf P}_{\rho_{\bf n}}  $. Moreover since $F$ is a locally Lipschitz source function,  we approximate $F$ by $ F_{Q}$ defined by
					\begin{align} \label{locally}
						F_Q\left(w({\bf x},t) \right)
						=
						\begin{cases}
							F(Q), &\quad w({\bf x},t) >Q,\\
							F(w({\bf x},t)), &\quad - Q \le { w}({\bf x},t)\le Q, \\
							F(-Q),&\quad w({\bf x},t) < - Q.
						\end{cases}
					\end{align}
					for any $Q>0$.
					In the sequel we use  a parameter $Q_{\bf n}:=Q(n_1,n_2,...n_d) \to +\infty $ as $|{\bf n}| \to +\infty$. So, when $ {\bf n}$ large enough, we have that  $Q_{\bf n} \ge \|{\bf u}\|_{L^\infty (0,T; L^2(\Omega))} $.  Moreover, we also have
					\begin{equation}
						F_{Q_{\bf n} }( {\bf u} (x,t) )= F( {\bf u} (x,t) ), ~~\text{for}~ |{\bf n}| ~~\text{large enough}.
					\end{equation}
			{Using observation on p. 1250 in  \cite{Tuan3},		we also obtain  that 	$F_{Q_{\bf n}}$ is a globally Lipschitz source  function in the following sense
					\begin{equation}  \label{locally1}
						\| F_{Q_{\bf n}} (v_1)- F_{Q_{\bf n}}(v_2) \|_{L^2(\Omega)} \le 2K(Q_{\bf n}) \|v_1-v_2\|_{L^2(\Omega)},~v_1, v_2 \in L^2(\Omega).
					\end{equation}	
					We consider a  regularized problem  below	
					\bq
					\left\{ \begin{gathered}
						\frac{\partial \overline U_{\rho_{\bf n}, \beta_{\bf n}   }}{\partial t}+\A(t)\overline U_{\rho_{\bf n}, \beta_{\bf n}  } -{\bf P}\overline U_{\rho_{\bf n}, \beta_{\bf n}  } +{\bf P}_{\rho_{\bf n} }\overline U_{\rho_{\bf n} , \beta_{\bf n} } \\
						\quad \quad   = F_{Q_{\bf n}}({\overline U_{\rho_{\bf n}, \beta_{\bf n}  } }({\bf x},t))+\widehat G_{\beta_{\bf n} }({\bf x},t) ,~~0<t<T, \hfill \\
						{\overline U_{\rho_{\bf n}, \beta_{\bf n}  } }( {\bf x},t)= 0,~~{\bf x} \in \partial \Omega,\hfill\\
						{\overline U_{\rho_{\bf n}, \beta_{\bf n}  } }( {\bf x},T)=\widehat H_{\beta_{\bf n}}({\bf x}). \hfill\\
					\end{gathered}  \right. \label{regu0111}
					\eq
					In the following Theorem,  we
					obtain the existence, uniqueness and continuous dependence of the solutions for the
					proposed problem. We state the error estimation between the regularized solution and the exact solution.
					Our main result in this section is as follows
					{
						\begin{theorem}  \label{theorem4.1}
							Let $H, G, H_{\bn}, G_{\bn}$ be as in Theorem \ref{theorem2.1}.   Let us   choose  $\bn, ~\rho_{\bf n}$ such that
							\begin{equation}
								\lim_{ |{\bf n}| \to +\infty}
								e^{2T{\overline  M} (\sqrt{\rho_{\bf n} })}    \beta_{\bf n}^{d/2} \prod_{k=1}^d  ( n_k)^{-4\mu_k} =\lim_{|{\bf n}| \to +\infty}   e^{2T{\overline  M} (\sqrt{\rho_{\bf n} })  } \beta_{\bf n}^{-\mu_0} =0.
							\end{equation}
							Then Problem \eqref{regu0111} has a unique solution ${\overline U_{\rho_{\bf n}, \beta_{\bf n}} } \in C([0,T];L^2(\Omega))\cap L^2(0,T;{\mathcal V}(\Omega))$.
							Assume that  Problem  \eqref{problem011} has unique solution $ {\bf u} \in C([0,T];L^2(\Omega))  \cap   L^\infty(0,T; 	\mathcal{G}_{1+\gamma}(\Omega)) $ for any $\gamma \ge 0$.
							Choose $Q_{\bf n}$ such that
							\begin{equation}
								\lim_{ |{\bf n}| \to +\infty}   e^{4 K(Q_{\bf n})T } \overline \Pi({\bf n})=0
							\end{equation}
							where
							\begin{equation}\label{def:pi-n}
								\overline \Pi({\bf n}) = \max \Bigg(  e^{2T{\overline  M} (\sqrt{\rho_{\bf n}})}    \beta_{\bf n}^{d/2} \prod_{k=1}^d  ( n_k)^{-4\mu_k}, e^{2T{\overline  M } (\sqrt{\rho_{\bf n}})  } \beta_{\bf n}^{-\mu_0} ,  \Big| {\overline  M}(|{\sqrt{\rho_{\bf n}}}|)\Big|^{-2\gamma} \Bigg).
							\end{equation}
							Then  as $|{\bf n}|\to\infty$  the error
							\begin{equation}  \label{error111}
								{\bf E}	\Big\| \overline U_{\rho_{\bf n},\bn }(.,t)-u(.,t) \Big\|_{L^2(\Omega)}^2 \quad \text{ is of order } \quad 		e^{\Big( 4K(Q_{\bf n} )+2  \Big) (T-t)  }  e^{-2t{\overline  M } (\sqrt{\rho_{\bf n} })} 	\overline \Pi({\bf n}) .
							\end{equation}
						\end{theorem}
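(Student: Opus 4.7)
The plan is to first establish existence and uniqueness of $\overline U_{\rho_{\bf n},\beta_{\bf n}}$ by standard parabolic theory. Under the time reversal $v(s) := \overline U_{\rho_{\bf n},\beta_{\bf n}}(\cdot, T-s)$, the regularized problem \eqref{regu0111} becomes a forward parabolic equation whose principal part $({\bf P} - \A(T-s))$ is coercive on $\mathcal{V}(\Omega)$ by Assumption (A)(c), perturbed by the bounded operator $-{\bf P}_{\rho_{\bf n}}$ (Lemma \ref{lemma4.1}) and driven by the globally Lipschitz nonlinearity $F_{Q_{\bf n}}$ thanks to \eqref{locally1}. A Faedo--Galerkin argument (or Picard iteration in the mild formulation associated with the evolution system generated by ${\bf P} - \A(t)$) then produces a unique solution in $C([0,T];L^2(\Omega)) \cap L^2(0,T;\mathcal{V}(\Omega))$.

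For the error estimate, I set $w := \overline U_{\rho_{\bf n},\beta_{\bf n}} - u$. Subtracting \eqref{problem011} from \eqref{regu0111} and moving $({\bf P} - {\bf P}_{\rho_{\bf n}})u$ to the right-hand side as a forcing term yields
\begin{equation*}
w_t + (\A(t) - {\bf P})w + {\bf P}_{\rho_{\bf n}} w = \bigl[F_{Q_{\bf n}}(\overline U_{\rho_{\bf n},\beta_{\bf n}}) - F(u)\bigr] + \bigl[\widehat G_{\beta_{\bf n}} - G\bigr] + ({\bf P} - {\bf P}_{\rho_{\bf n}})u,
\end{equation*}
with terminal data $w(\cdot,T) = \widehat H_{\beta_{\bf n}} - H$. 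Taking the $L^2$-inner product with $w$, Assumption (A)(c) gives $\langle (\A(t) - {\bf P})w,w\rangle \leq -\widehat M\|w\|_{\mathcal{V}}^2 \leq 0$, while Lemma \ref{lemma4.1} together with $\overline M \geq 0$ implies $0 \leq \langle {\bf P}_{\rho_{\bf n}} w, w\rangle \leq \overline M(\sqrt{\rho_{\bf n}})\|w\|^2$. Since $Q_{\bf n} \to \infty$ and $u$ is bounded, for large $|{\bf n}|$ one has $F_{Q_{\bf n}}(u) = F(u)$, so \eqref{locally1} yields $\|F_{Q_{\bf n}}(\overline U_{\rho_{\bf n},\beta_{\bf n}}) - F(u)\| \leq 2K(Q_{\bf n})\|w\|$. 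Applying Young's inequality to the remaining two data-error cross terms produces
\begin{equation*}
-\tfrac{d}{dt}\|w\|^2 \leq \bigl(4K(Q_{\bf n}) + 2 + 2\overline M(\sqrt{\rho_{\bf n}})\bigr)\|w\|^2 + \|\widehat G_{\beta_{\bf n}} - G\|^2 + \|({\bf P} - {\bf P}_{\rho_{\bf n}})u\|^2.
\end{equation*}

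Integrating this inequality from $t$ to $T$, taking expectation, and applying the backward Gronwall inequality stated at the beginning of Section 2 with $D = 4K(Q_{\bf n}) + 2 + 2\overline M(\sqrt{\rho_{\bf n}})$, I obtain
\begin{equation*}
{\bf E}\|w(t)\|^2 \leq e^{(4K(Q_{\bf n}) + 2)(T-t)} e^{2\overline M(\sqrt{\rho_{\bf n}})(T-t)} \Bigl[{\bf E}\|\widehat H_{\beta_{\bf n}} - H\|^2 + T\,{\bf E}\|\widehat G_{\beta_{\bf n}} - G\|_{L^\infty L^2}^2 + T\|({\bf P} - {\bf P}_{\rho_{\bf n}})u\|_{L^\infty L^2}^2\Bigr].
\end{equation*}
Theorem \ref{theorem2.1} controls the two data-error expectations by $O\bigl(\beta_{\bf n}^{d/2}\prod_k n_k^{-4\mu_k} + \beta_{\bf n}^{-\mu_0}\bigr)$, and Lemma \ref{lemma4.2} together with $u \in L^\infty(0,T;\mathcal{G}_{1+\gamma}(\Omega))$ bounds the last term by $|\overline M(\sqrt{\rho_{\bf n}})|^{-2\gamma}\, e^{-2T\overline M(\sqrt{\rho_{\bf n}})}\,\|u\|_{L^\infty \mathcal{G}_{1+\gamma}}^2$. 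Splitting $e^{2\overline M(\sqrt{\rho_{\bf n}})(T-t)} = e^{-2t\overline M(\sqrt{\rho_{\bf n}})}\cdot e^{2T\overline M(\sqrt{\rho_{\bf n}})}$, the first two contributions assemble into $e^{-2t\overline M}\cdot e^{2T\overline M}\cdot[\ldots]$, while the third becomes $e^{-2t\overline M}\cdot|\overline M|^{-2\gamma}$ because the $e^{2T\overline M}$ from the Gronwall step exactly cancels the $e^{-2T\overline M}$ of Lemma \ref{lemma4.2}. Taking the maximum of the three resulting quantities reproduces exactly $e^{-2t\overline M(\sqrt{\rho_{\bf n}})}\,\overline\Pi({\bf n})$, establishing the order claimed in \eqref{error111}.

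The main obstacle is the careful accounting of exponential factors in $\overline M(\sqrt{\rho_{\bf n}})$: the growth $e^{2\overline M(\sqrt{\rho_{\bf n}})(T-t)}$ introduced by the Gronwall step (arising from the low-frequency operator ${\bf P}_{\rho_{\bf n}}$) must be balanced against the decay $e^{-2T\overline M(\sqrt{\rho_{\bf n}})}$ coming from the Fourier tail bound in Lemma \ref{lemma4.2}, so that the three components of $\overline\Pi({\bf n})$ emerge on equal footing and combine into a single $e^{-2t\overline M}\overline\Pi({\bf n})$. Once this bookkeeping is done, the rest is a routine energy/Gronwall argument, and the hypothesis $e^{4K(Q_{\bf n})T}\overline\Pi({\bf n}) \to 0$ immediately forces the mean-square error to vanish as $|{\bf n}| \to \infty$.
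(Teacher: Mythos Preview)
Your proposal is correct and follows essentially the same route as the paper. The only cosmetic difference is in how the large exponential factor from ${\bf P}_{\rho_{\bf n}}$ is handled: the paper pre-multiplies the difference by the weight $e^{\kappa_{\bf n}(t-T)}$ with $\kappa_{\bf n}=\overline M(\sqrt{\rho_{\bf n}})$, so that the term $2\overline M(\sqrt{\rho_{\bf n}})\|w\|^2$ is cancelled \emph{before} Gronwall is applied (leaving only the constant $4K(Q_{\bf n})+2$ in the Gronwall exponent), whereas you keep $2\overline M(\sqrt{\rho_{\bf n}})$ inside the Gronwall constant and then split $e^{2\overline M(\sqrt{\rho_{\bf n}})(T-t)}=e^{-2t\overline M(\sqrt{\rho_{\bf n}})}e^{2T\overline M(\sqrt{\rho_{\bf n}})}$ afterward; the two bookkeepings are equivalent and yield the same final order.
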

					}
					
					\begin{remark}  \label{remark4.1}
						Thanks to Remark \eqref{remma3.1}, 	we give one choice for  $ \beta_{n}$ as follows
						\begin{equation} \label{betan}
							\beta_{\bf n} = \left( \prod_{k=1}^d  n_k  \right)^{\frac{1}{ 2 \al_0+d/2 }}
						\end{equation}
					{ 	where $0<\al_0 < \frac{\mu_0}{2}$. }
						Then we choose $ \rho_{\bf n}$ such that
						\begin{equation}
							{\overline  M}  (  \sqrt{\rho_{\bf n}} )= \frac{\al_0 }{T (2 \al_0+d/2)} \log \left(  \prod_{k=1}^d  n_k \right).
						\end{equation}
						A simple computation gives that
						\begin{equation} \label{Pin}
							\overline \Pi({\bf n})=	\max \Bigg( \frac{1 }{\prod_{k=1}^d  ( n_k)^{4\mu_k-1} }, \left( \prod_{k=1}^d  n_k  \right)^{\frac{2\al_0 - \mu_0}{ 2 \al_0+d/2 }} ,
							\left( \prod_{k=1}^d  n_k  \right)^{\frac{-4\al_0 \delta}{ 4 T\al_0+dT }}\Bigg).
						\end{equation}
						Since 		\begin{equation}
							\lim_{ |{\bf n}| \to +\infty}   e^{4 K(Q_{\bf n})T } \overline \Pi({\bf n})=0
						\end{equation}
						we can take  $K(Q_{\bf n})$ such that  $  e^{4 K(Q_{\bf n})T }= \left(\overline \Pi({\bf n}) \right)^{\delta_0-1}  $
						for any $0<  \delta_0 <1$.  So, we have
						\begin{equation}  \label{4.26}
							K(Q_{\bf n}):= \frac{ \delta_0-1 }{4T} \log \left( \overline \Pi({\bf n}) \right).
						\end{equation}
					\end{remark}
				{ 	Since $\al_0 < \frac{\mu_0}{2}$ and $4\mu_k>1,~k=\overline{1,d}$, using   \eqref{Pin}, we deduce  that $\lim_{ |{\bf n}| \to \infty } \overline \Pi({\bf n})=0 $. Hence,   we need to  choose ${\bf n}$ large  enough such that  $\overline \Pi({\bf n})<1$.   So, the equality \eqref{4.26} is suitable which leads to a chosen $Q_{\bf n}$. }

				\noindent 	We state the two  corollaries of the Theorem \ref{theorem4.1} next
					\begin{corollary}  \label{corollary4.1}
						Let us take two functions $\Gamma_0, \Gamma_1$ which are continuous functions on $[0,T]$. Assume that
						\begin{align}
							m_0= \min \Big(  \min_{0 \le t \le T} |\Gamma_0(t)|,~  \min_{0 \le t \le T} |\Gamma_1(t)|\Big) >0
						\end{align}
						and
						\begin{align}
							m_1= \max \Big(  \max_{0 \le t \le T} |\Gamma_0(t)|,~  \max_{0 \le t \le T} |\Gamma_1(t)|\Big) >0.
						\end{align}
						In Problem \eqref{problem011}, let  $\A(t) {\bf u}=  -\Gamma_0 (t) \Delta {\bf u}+ \Gamma_1 (t) \Delta^2 {\bf u}$ and $F({\bf u})=  {\bf u}-{\bf u}^3$.  Then we get the backward in time problem for  {\bf extended Fisher-Kolmogorov equation} with time dependent coefficients  as follows
						\bq
						\left\{ \begin{gathered}
							{\bf u}_t -\Gamma_0 (t) \Delta {\bf u}+ \Gamma_1 (t) \Delta^2 {\bf u} = {\bf u}-{\bf u}^3+G({\bf x},t) ,  {\bf x} \in \Omega, 0<t<T, \hfill \\
							{\bf u} ({\bf x, t})= 	\Delta {\bf u} ({\bf x, t})=0,~ {\bf x} \in \partial \Omega, 0 \le t\le T,\hfill\\
							{\bf u}({\bf x},T)= H({\bf x}), ~{\bf x} \in \Omega .\hfill\\
						\end{gathered}  \right. \label{Fisher-Kolmogorov}
						\eq
						From  Definition \eqref{asumption1111},   	let  the operator ${\bf P}$    be   as follows
						\begin{equation} \label{P1}
							{\bf P} v := m_1\sum_{{\bf p} \in {\mathbb N}^d  } \big({\bf p}^2+{\bf p}^4 \big) <v, \psi_{\bf p}> \psi_{\bf p}.
						\end{equation}
						for any $v \in L^2(\Omega)$. It is easy to show that 	the pair of operators $( \A(t), {\bf P} ) $  as above satisfies Assumption $(A)$ in Defintion \eqref{asumption1111}. It is easy to see that the eigenvalues of ${\bf P}$ are
						$\overline M ({\bf p})= m_1 \big({\bf p}^2+{\bf p}^4 \big) $.
						Next, we find the operator  ${\bf P}_{\rho_{\bf n}}$ by truncating Fourier series in
						\eqref{P1} and we have
						\begin{equation}
							{\bf P}_{\rho_n} v:=  m_1 \sum_{ {\bf |p|} \le   \sqrt{\frac{\rho_{\bf n}} {  m_1 }  } }  \Big({\bf p}^2+{\bf p}^4\Big) <v, \psi_{\bf p}> \psi_{\bf p}({\bf x}).
						\end{equation}
						Thanks to \eqref{regu0111}, a  regularized problem for \eqref{Fisher-Kolmogorov} is given below
						\bq
						\left\{ \begin{gathered}
							\frac{\partial \overline U_{\rho_{\bf n}, \beta_{\bf n}   }}{\partial t} -\Gamma_0 (t) \Delta \overline U_{\rho_{\bf n}, \beta_{\bf n} }  + \Gamma_1 (t) \Delta^2 \overline U_{\rho_{\bf n}, \beta_{\bf n} } -{\bf P}\overline U_{\rho_{\bf n}, \beta_{\bf n}  } +{\bf P}_{\rho_{\bf n} }\overline U_{\rho_{\bf n} , \beta_{\bf n} } \\
							\quad \quad   = F_{Q_{\bf n}}({\overline U_{\rho_{\bf n}, \beta_{\bf n}  } }({\bf x},t))+\widehat G_{\beta_{\bf n} }({\bf x},t) ,~~0<t<T, \hfill \\
							{\overline U_{\rho_{\bf n}, \beta_{\bf n}  } }( {\bf x},t)= 0,~~{\bf x} \in \partial \Omega,\hfill\\
							{\overline U_{\rho_{\bf n}, \beta_{\bf n}  } }( {\bf x},T)=\widehat H_{\beta_{\bf n}}({\bf x}), \hfill\\
						\end{gathered}  \right. \label{regu011111}
						\eq		
						where
						\[
						F_{Q_{\bf n}} \left({\overline U_{\rho_{\bf n}, \beta_{\bf n}  } } ({\bf x},t) \right)
						=
						\begin{cases}
						Q_{\bf n}-Q_{\bf n}^3, &\quad 	\overline U_{\rho_{\bf n}, \beta_{\bf n}   }(	{\bf x},t) >Q_{\bf n},\\
						\overline U_{\rho_{\bf n}, \beta_{\bf n}   }-\left(\overline U_{\rho_{\bf n}, \beta_{\bf n}   }\right)^3, &\quad - Q_{\bf n} \le 	\overline U_{\rho_{\bf n}, \beta_{\bf n}   }(	{\bf x},t)\le Q_{\bf n}, \\
						- Q_{\bf n}+Q_{\bf n}^3,&\quad 	\overline U_{\rho_{\bf n}, \beta_{\bf n}   }(	{\bf x},t) < - Q_{\bf n}.
						\end{cases}
						\]
						Assume that $H, G, {\bf u}$ be as  in Theorem  \eqref{theorem4.1}. Let   $ \bn $ be  as in \eqref{betan}. Let  $\rho_{\bf n}$ and $	Q_{\bf n}$ be such that
						\begin{equation}
							\rho_{\bf n}+ \rho_{\bf n}^2= \frac{2\al_0}{ m_1 T(4 \al_0+d) }  \log \left(  \prod_{k=1}^d  n_k \right)
						\end{equation}
						and
						\begin{equation}
							1+3	Q_{\bf n}^2 = \frac{ \delta_0-1 }{4T} \log \left( \overline \Pi({\bf n}) \right)
						\end{equation}
						respectively.  {The last two equations can be solved by a simple way that  leads to  the value of $\rho_{\bf n}$ and $Q_{\bf n}$.  }	
						Then the error between  the solution {\bf u} of Problem \eqref{Fisher-Kolmogorov} and  the solution $\overline U_{\rho_{\bf n}, \beta_{\bf n}   }$ of Problem \eqref{regu011111} is of order
						\begin{equation} \label{4.35}
							\max \Bigg[ \frac{1 }{\prod_{k=1}^d  ( n_k)^{4\mu_k\delta_0-\delta_0} }, \left( \prod_{k=1}^d  n_k  \right)^{\frac{2\al_0 \delta_0- \mu_0 \delta_0}{ 2 \al_0+d/2 }} ,
							\left( \prod_{k=1}^d  n_k  \right)^{\frac{-4\al_0 \delta \delta_0}{ 4 T\al_0+dT }}\Bigg]   \left( \prod_{k=1}^d  n_k  \right)^{\frac{-2\al_0 t}{ 2 T\al_0+dT/2 }}.
						\end{equation}
							{ It is easy to check that the term in \eqref{4.35} tends to zero as $|{\bf n}| \to +\infty$}.
					\end{corollary}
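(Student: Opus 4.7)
The approach is to verify the hypotheses of Theorem \ref{theorem4.1} for the concrete operator $\mathcal{A}(t){\bf u} = -\Gamma_0(t)\Delta {\bf u} + \Gamma_1(t)\Delta^2 {\bf u}$, nonlinearity $F({\bf u})={\bf u}-{\bf u}^3$, and the stated ${\bf P}$, then substitute the prescribed $\beta_{\bf n}$, $\rho_{\bf n}$, $Q_{\bf n}$ into the error formula \eqref{error111} and simplify. Because $-\Delta\psi_{\bf p}=|{\bf p}|^2\psi_{\bf p}$ and $\Delta^2\psi_{\bf p}=|{\bf p}|^4\psi_{\bf p}$, the operator ${\bf P}$ in \eqref{P1} has the non-decreasing eigenvalues $\overline{M}(|{\bf p}|)=m_1(|{\bf p}|^2+|{\bf p}|^4)$, giving Assumption (A)(a). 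Taking $\mathcal{V}(\Omega)=H^2(\Omega)\cap H^1_0(\Omega)$ with the natural spectral norm, the diagonal identity
\begin{equation*}
\big\langle ({\bf P}-\mathcal{A}(t))u,v\big\rangle_{L^2(\Omega)} = \sum_{{\bf p}\in\mathbb{N}^d}\Big[(m_1-\Gamma_0(t))|{\bf p}|^2+(m_1-\Gamma_1(t))|{\bf p}|^4\Big]\langle u,\psi_{\bf p}\rangle\langle v,\psi_{\bf p}\rangle
\end{equation*}
yields (b) by Cauchy--Schwarz and (c) with $u=v$, since the bracketed coefficients are non-negative by definition of $m_1$. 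The Lipschitz constant of the truncated $F$ follows from the factoring $F(u)-F(v)=(u-v)(1-u^2-uv-v^2)$: for $|u|,|v|\le Q$ one gets $K(Q)=1+3Q^2$, which validates the prescribed identity $1+3Q_{\bf n}^2=\tfrac{\delta_0-1}{4T}\log\overline{\Pi}({\bf n})$ via \eqref{4.26}.

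Next I would plug the chosen parameters into the bound \eqref{error111}. The relation $\rho_{\bf n}+\rho_{\bf n}^2=\tfrac{2\alpha_0}{m_1T(4\alpha_0+d)}\log\big(\prod_{k=1}^d n_k\big)$ yields $\overline{M}(\sqrt{\rho_{\bf n}})=\tfrac{2\alpha_0}{T(4\alpha_0+d)}\log\big(\prod_{k=1}^d n_k\big)$, hence $e^{2T\overline{M}(\sqrt{\rho_{\bf n}})}=\big(\prod_{k=1}^d n_k\big)^{2\alpha_0/(2\alpha_0+d/2)}$. Combined with $\beta_{\bf n}=\big(\prod_{k=1}^d n_k\big)^{1/(2\alpha_0+d/2)}$ from Remark \ref{remark4.1}, each of the three terms inside the max defining $\overline{\Pi}({\bf n})$ collapses to a pure power of $\prod_{k=1}^d n_k$, reproducing the three entries inside the max in \eqref{4.35}. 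Finally, the prefactor $e^{(4K(Q_{\bf n})+2)(T-t)}$ in \eqref{error111}, evaluated at the prescribed $Q_{\bf n}$, contributes a factor $\overline{\Pi}({\bf n})^{\delta_0-1}$ (up to the bounded $e^{2(T-t)}$), which merges with the $\overline{\Pi}({\bf n})$ in \eqref{error111} to give $\overline{\Pi}({\bf n})^{\delta_0}$; multiplying by $e^{-2t\overline{M}(\sqrt{\rho_{\bf n}})}=\big(\prod_{k=1}^d n_k\big)^{-2\alpha_0 t/(2T\alpha_0+dT/2)}$ reproduces \eqref{4.35} line by line.

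The main obstacle will be Assumption (A)(c). When $m_1=\max_t \max(\Gamma_0(t),\Gamma_1(t))$ is actually attained, the coercivity constant $\widehat M$ degenerates to zero at that time on the corresponding eigenmode; this can be repaired by inflating the definition of ${\bf P}$ to use $m_1+\varepsilon$ in place of $m_1$ for any fixed $\varepsilon>0$, which alters only multiplicative constants and leaves the final rate in \eqref{4.35} unchanged. A secondary check is that the a priori regularity ${\bf u}\in L^\infty(0,T;\mathcal{G}_{1+\gamma}(\Omega))$ required by Theorem \ref{theorem4.1} is compatible with the polynomial growth $\overline{M}(|{\bf p}|)\sim|{\bf p}|^4$, but this is an assumption on the sought solution rather than something to be verified. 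The remaining calculations are bookkeeping.
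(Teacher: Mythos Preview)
Your proposal is correct and follows precisely the route the paper intends: the paper presents Corollary \ref{corollary4.1} as an immediate specialization of Theorem \ref{theorem4.1} together with the parameter choices of Remark \ref{remark4.1}, without giving a separate proof, and your outline supplies exactly those verifications (diagonalization of ${\bf P}-\mathcal A(t)$ in the $\psi_{\bf p}$ basis, $K(Q)=1+3Q^2$ from $F(u)-F(v)=(u-v)(1-u^2-uv-v^2)$, and the substitution $\overline M(\sqrt{\rho_{\bf n}})=m_1(\rho_{\bf n}+\rho_{\bf n}^2)$). Your observation that Assumption (A)(c) may degenerate when $m_1$ is attained, and the fix of replacing $m_1$ by $m_1+\varepsilon$, is a genuine refinement that the paper glosses over with ``it is easy to show''; it does not affect the stated rate.
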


					\subsubsection{\bf Proof of the main results}

					\begin{proof}[\bf Proof of Theorem \ref{theorem4.1}]
						
						{\bf Step 1.}
						{\it The existence and uniqueness of the regularized problem.} We refer to  the proof of  Theorem \ref{theorem4.2} where  we prove existence and uniqueness   for the more general  operator $\A(t,u)$ which is more general than the  operator $\A(t)$ in  Theorem \ref{theorem4.1} and   $\A(t,u)= \A(t)$.

						\noindent 	{\bf Step 2.} {\it Regularity of the regularized solution $\overline U_{\rho_{\bf n},\bn}$. }\\
						Let us define the function
						\begin{equation}
							\widetilde U_{\rho_{\bf n}, \bn }({\bf x},t)=  e^{ \kappa_{\bf n} (t-T)} \overline U_{\rho_{\bf n}, \bn}({\bf x},t),
						\end{equation}	
						where $\kappa_{\bf n} $ is positive constant to be selected later.
						By taking the partial derivative of $\widetilde U_{\rho_{\bf n}, \bn}({\bf x},t)$ with respect to $t$, we obtain
						\begin{eqnarray}
							\begin{aligned}
								\frac{\partial \widetilde U_{\rho_{\bf n}, \bn}}{\partial t}({\bf x},t)  &= e^{\kappa_{\bf n} (t-T)} 	\frac{\partial \overline U_{\rho_n,\bn} }{\partial t}({\bf x},t) + \kappa_{\bf n} e^{\kappa_{\bf n}(t-T)}    \overline U_{\rho_{\bf n}, \bn} ({\bf x},t)\nn\\
								&= -e^{\kappa_{\bf n} (t-T)}\Big(\A(t)-\mathbb{\bf P}+{\bf P}_{\rho_{\bf n}}\Big)  \overline U_{\rho_{\bf n}, \bn}({\bf x},t)+ e^{\kappa_{\bf n}(t-T)} F_{Q_{\bf n}}({\overline U_{\rho_{\bf n}, \bn} }({\bf x},t))\nn\\
								&\quad \quad \quad +e^{\kappa_{\bf n}(t-T)} \widehat G_{\rho_{\bf n}}({\bf x},t)+  \kappa_n e^{\kappa_{\bf n}(t-T)}  \overline U_{\rho_{\bf n}, \bn}({\bf x},t) \nn\\
								&= -\Big(\A(t)-\mathbb{\bf P}+{\bf P}_{\rho_{\bf n}}\Big)\widetilde U_{\rho_{\bf n}}  ({\bf x},t)+  \kappa_{\bf n} \widetilde U_{\rho_{\bf n}, \bn}  ({\bf x},t)\nn\\
								&\quad \quad \quad +e^{\kappa_{\bf n}(t-T)} \widehat G_{\beta_{\bf n}}({\bf x},t)+e^{\kappa_{\bf n}(t-T)} F_{Q_{\bf n}}({\overline U_{\rho_{\bf n}, \bn} }({\bf x},t)).
							\end{aligned}	
						\end{eqnarray}
						Taking the inner product of both sides with $\widetilde U_{\rho_{\bf n}, \bn}  ({\bf x},t)$ gives
						\begin{align}
							&\frac{1}{2} \frac{\partial }{\partial t}\| \widetilde U_{\rho_{\bf n}, \bn}  (.,t)\|^2_{L^2(\Omega)}\nn\\
							&= \underbrace {\Big\langle \big( \mathbb{\bf P}-\A(t)\big)\widetilde U_{\rho_{\bf n}, \bn}  (.,t) , \widetilde U_{\rho_{\bf n},\bn}  (.,t)\Big \rangle_{L^2(\Omega)}}_{=: \mathcal J_1} +  \underbrace {\Big\langle  {\bf P}_{\rho_{\bf n}} \widetilde U_{\rho_{\bf n} , \bn}  (.,t), \widetilde U_{\rho_{\bf n}, \bn }  (.,t) \Big \rangle_{L^2(\Omega)} }_{=:\mathcal J_2}\nn\\
							&+ \kappa_{\bf n} \|\widetilde U_{\rho_{\bf n}, \bn}  (.,t) \|^2_{L^2(\Omega)}\nn\\
							&+ \underbrace { \Big\langle  e^{\kappa_{\bf n} (t-T)} F_{Q_{\bf n}}({\overline U_{\rho_{\bf n}, \bn} }(.,t)),\widetilde U_{\rho_{\bf n} , \bn}  (.,t)  \Big \rangle_{L^2(\Omega)} }_{=: \mathcal J_3}+\underbrace { \Big\langle  e^{\kappa_{\bf n}  (t-T)} \widehat G_{\beta_{\bf n} }(.,t),\widetilde U_{\rho_{\bf n}, \bn}  (.,t)  \Big \rangle_{L^2(\Omega)} }_{=: \mathcal J_4}.
						\end{align}
						It remains to estimate
						$\mathcal J_1, \mathcal J_2$ and $\mathcal J_3$.
						By the conditions for $\mathbb{\bf P}$ in Assumption  (A) in  Definition \ref{asumption1111}, we obtain that
						\begin{equation}
							\mathcal J_1 ~\ge~  \widehat M \Vert \widetilde U_{\rho_{\bf n}, \bn}  (.,t)\Vert_{{\mathcal V}(\Omega)}^2. \label{estimate11}
						\end{equation}
						The term $\mathcal J_2$ can be estimated as follows
						\begin{equation}
							|\mathcal J_2|  ~\le~  \|{\bf P}_{\rho_{\bf n} }\|_{\mathbb{L} (L^2(\Omega);L^2(\Omega)) } \| \widetilde U_{\rho_{\bf n}, \bn }  (.,t)\|^2_{L^2(\Omega)} ~\le~ {\overline  M} (\sqrt{\rho_{\bf n} })\| \widetilde U_{\rho_{\bf n}, \bn }  (.,t)\|^2_{L^2(\Omega)} , \label{estimate22}
						\end{equation}
						where we have used the inequality \eqref{inequality1}.
						Using Cauchy-Schwarz inequality and noting the globally Lipschitz property of the function $F_{Q_{\bf n}}$,  we deduce that
						\begin{equation}\label{estimate33}
							\begin{split}
								|\mathcal J_3| & = \Big\langle  e^{\kappa_{\bf n} (t-T)} F_{Q_{\bf n} }({\overline U_{\rho_{\bf n}, \bn} }(.,t)),\widetilde U_{\rho_{\bf n}, \bn}  (.,t)  \Big \rangle_{L^2(\Omega)} \\
								&\le \frac{1}{2} e^{2\kappa_{\bf n}(t-T)}\| F_{Q_{\bf n}}({\overline U_{\rho_{\bf n}, \bn} }(.,t))\|^2_{L^2(\Omega)} + \frac{1}{2}  \|\widetilde U_{\rho_{\bf n}, \bn}  (.,t) \|^2_{L^2(\Omega)} \\
								&  \le \frac{1}{2} e^{2\kappa_{\bf n}(t-T)}\Big( 2K(Q_{\bf n}) \| \widetilde U_{\rho_{\bf n}, \bn}  (.,t)\|_{L^2(\Omega)} +  \|F(0)\|_{L^2(\Omega)} \Big) ^2+ \frac{1}{2}  \|\widetilde U_{\rho_{\bf n}, \bn}  (.,t) \|^2_{L^2(\Omega)} \\
								& \le \Big( 2K^2Q_{\bf n}+1\Big) \| \widetilde U_{\rho_{\bf n}, \bn}  (.,t)\|_{L^2(\Omega)}^2+ e^{2\kappa_{\bf n}(t-T)}  \|F(0)\|^2_{L^2(\Omega)} .
							\end{split}
						\end{equation}
						{ where we note that we have used above the fact that  $F_{Q_{\bf n}}$ (0) = F(0)}.
						Using Cauchy-Schwartz inequality, we have the bound of $	|\mathcal J_4|$ as follows
						\begin{equation}\label{estimate44}
							\begin{split}
								|\mathcal J_4|& =  \Big|	\Big\langle  e^{\kappa_{\bf n}(t-T)} \widehat G_{\beta_{\bf n}}(.,t),\widetilde U_{\rho_{\bf n}, \bn}  (.,t)  \Big \rangle_{L^2(\Omega)} \Big| \\
								&\le  \frac{1}{2} e^{2\kappa_{\bf n}(t-T)}\| \widehat G_{\beta_{\bf n}}(.,t) \|^2_{L^2(\Omega)} + \frac{1}{2}  \|\widetilde U_{\rho_{\bf n}, \bn}  (.,t) \|^2_{L^2(\Omega)} .
							\end{split}
						\end{equation}
						Combining \eqref{estimate11}, \eqref{estimate22},\eqref{estimate33},\eqref{estimate44} we have that
						\begin{align*}
							\frac{1}{2} \frac{\partial }{\partial t}\| \widetilde U_{\rho_{\bf n}, \bn}  (.,t)\|^2_{L^2(\Omega)} &\ge \widehat M \Vert \widetilde U_{\rho_{\bf n}, \bn}  (.,t)\Vert_{{\mathcal V}(\Omega)}^2\nn\\
							&+\kappa_{\bf n} \|\widetilde U_{\rho_{\bf n}, \bn}  (.,t) \|^2_{L^2(\Omega)}- {\overline  M} (\sqrt{\rho_{\bf n}})\| \widetilde U_{\rho_{\bf n}, \bn}  (.,t)\|^2_{L^2(\Omega)}\nn\\
							&-\Big( 2K^2(Q_{\bf n})+1\Big) \| \widetilde U_{\rho_{\bf n}, \bn}  (.,t)\|_H^2+ e^{2\kappa_{\bf n}(t-T)}  \|F(0)\|^2_{L^2(\Omega)}.
						\end{align*}
						Integrating the last inequality over $[t,T]$ yields
						\begin{eqnarray*}
							\begin{aligned}
								&{\bf E}\| \widetilde U_{\rho_{\bf n}, \bn}  (.,T)\|^2_{L^2(\Omega)} +e^{2\kappa_{\bf n}(t-T)}{\bf E}\| \widehat G_{\beta_{\bf n}}(.,t) \|^2_{L^2(\Omega)} + (T-t) \|F(0)\|^2_{L^2(\Omega)}  \nn\\
								&\ge {\bf E}\| \widetilde U_{\rho_{\bf n}, \bn}  (.,t)\|^2_{L^2(\Omega)}+ 2 \widehat M {\bf E} \left[ \int_t^T  \Vert \widetilde U_{\rho_{\bf n}, \bn}  (.,\tau)\Vert_{{\mathcal V}(\Omega)}^2 d\tau\right]\nn\\
								&+\int_t^T  \Big(2 \kappa_{\bf n}-2 {\overline  M} (\sqrt{\rho_{\bf n}}) -4K^2(Q_{\bf n}) -2 \Big){\bf E} \| \widetilde U_{\rho_{\bf n}, \bn}  (.,\tau)\|^2_{L^2(\Omega)}  d\tau.
							\end{aligned}
						\end{eqnarray*}
						By  choosing  $\kappa_{\bf n}={\overline  M} (\sqrt{\rho_{\bf n}})$, we derive that
						\begin{align}
							e^{2(t-T){\overline  M} (\sqrt{\rho_{\bf n}})} {\bf E}\|  \overline U_{\rho_{\bf n}, \bn}(.,t)\|^2_{L^2(\Omega)}  	&\le \Big( 4K^2(Q_{\bf n})  +2 \Big) \int_t^T e^{2(\tau-T){\overline  M} (\sqrt{\rho_{\bf n}})} {\bf E} \| \widetilde U_{\rho_{\bf n}, \bn}  (.,\tau)\|^2_{L^2(\Omega)}  d\tau\nn\\
							&+{\bf E}\| \widehat H_{\beta_{\bf n}}\|^2_{L^2(\Omega)} +e^{2(t-T){\overline M } (\sqrt{\rho_{\bf n}})} {\bf E}\| \widehat G_{\beta_{\bf n}}(.,t) \|^2_{L^2(\Omega)} \nn\\
							&+ (T-t) \|F(0)\|^2_{L^2(\Omega)}.
							\label{estimate1}
						\end{align}
						Multiplying both sides of the last inequality by $e^{2T{\overline  M} (\sqrt{\rho_{\bf n}})} $, we get
						\begin{eqnarray*}
							\begin{aligned}
								e^{2t{\overline  M} (\sqrt{\rho_{\bf n}})} {\bf E}\|  \overline U_{\rho_{\bf n}, \bn}(.,t)\|^2_{L^2(\Omega)}  	&\le \Big( 4K^2(Q_{\bf n}) +2 \Big) \int_t^T e^{2\tau{\overline  M} (\sqrt{\rho_{\bf n}})} {\bf E} \| \widetilde U_{\rho_{\bf n}, \bn}  (.,\tau)\|^2_{L^2(\Omega)}  d\tau\nn\\
								&+e^{2T{\overline  M} (\sqrt{\rho_{\bf n}})} {\bf E}\| \widehat H_{\beta_{\bf n}}\|^2_{L^2(\Omega)} +e^{2t{\overline  M } (\sqrt{\rho_{\bf n}})} {\bf E}\| \widehat G_{\beta_{\bf n}}(.,t) \|^2_{L^2(\Omega)} \nn\\
								&+e^{2T{\overline  M } (\sqrt{\rho_{\bf n}})} (T-t) \|F(0)\|^2_{L^2(\Omega)}.
							\end{aligned}
						\end{eqnarray*}
						Noting that $$e^{2t{\overline  M} (\sqrt{\rho_{\bf n}})} {\bf E}\| \widehat G_{\beta_{\bf n}}(.,t) \|^2_{L^2(\Omega)}  \le e^{2T{\overline  M} (\sqrt{\rho_{\bf n}})} {\bf E}\| \widehat G_{\beta_{\bf n}} \|^2_{L^\infty ( 0,T; L^2(\Omega))}, $$ we deduce that
						\begin{align}
							e^{2t{\overline  M} (\sqrt{\rho_{\bf n}})} {\bf E}\|  \overline U_{\rho_{\bf n}, \bn}(.,t)\|^2_{L^2(\Omega)}  	&\le \Big( 4K^2(Q_{\bf n})   +2 \Big) \int_t^T e^{2\tau{\overline  M } (\sqrt{\rho_{\bf n}})} {\bf E} \| \widetilde U_{\rho_{\bf n}, \bn}  (.,\tau)\|^2_{L^2(\Omega)}  d\tau\nn\\
							&+e^{2T{\overline  M} (\sqrt{\rho_{\bf n}})} {\bf E}\| \widehat H_{\beta_{\bf n}}\|^2_{L^2(\Omega)} +e^{2T{\overline M} (\sqrt{\rho_{\bf n}})} {\bf E}\| \widehat G_{\beta_{\bf n}} \|^2_{L^\infty ( 0,T; L^2(\Omega))} \nn\\
							&+e^{2T{\overline  M} (\sqrt{\rho_{\bf n}})} T \|F(0)\|^2_{L^2(\Omega)}.
							\label{estimate1111}
						\end{align}
						Applying Gronwall's inequality to the last inequality, we get
						\begin{align}
							&e^{2t{\overline  M} (\sqrt{\rho_{\bf n}})} {\bf E}\|  \overline U_{\rho_{\bf n}, \bn}(.,t)\|^2_{L^2(\Omega)}   \nn\\
							&\le \exp \Big(  4K^2(Q_{\bf n}) (T-t)\Big) e^{2T{\overline  M} (\sqrt{\rho_{\bf n}})} \Big[  {\bf E}\| \widehat H_{\beta_{\bf n}}\|^2_{L^2(\Omega)} + {\bf E}\| \widehat G_{\beta_{\bf n}} \|^2_{L^\infty ( 0,T; L^2(\Omega))} + T \|F(0)\|^2_{L^2(\Omega)}\Big].
							\label{estimate1111}
						\end{align}
						Multiplying both sides of the last inequality with  $e^{-2t{\overline  M} (\sqrt{\rho_{\bf n}})} $, we get the upper bound of    $ {\bf E}\|  \overline U_{\rho_{\bf n}}(.,t)\|^2_{L^2(\Omega)}$
						{  which shows  the stability of $\overline U_{\rho_{\bf n}, \bn}(.,t)$ in the sense of  the solution $\overline U_{\rho_{\bf n}, \bn}(.,t)$  depend  continuously on     the given data $\widehat H_{\beta_{\bf n}},~\widehat G_{\beta_{\bf n}} $ and $F$ .}
					
{\bf Step 3.} {\it
							Error estimate between the regularized solution and the sought solution.}\\	
						It is easy to see that ${\bf u}$ satisfies
						\begin{align*}
							\frac{\partial  {\bf u} ({\bf x},t)}{\partial t}+ {\bf P}_{\rho_{\bf n}  } {\bf u}({\bf x},t)&= F({\bf u}({\bf x},t))+G({\bf x},t)\nn\\
							&+ \big(  {\bf P}_{\rho_{\bf n}} -\mathbb{\bf P}\big){\bf u}({\bf x},t)+\big(\mathbb{\bf P}-\A(t)\big){\bf u}({\bf x},t).
						\end{align*}
						Putting
						$$\widetilde Z_{\rho_{\bf n}, \bn }({\bf x},t) = \overline U_{\rho_{\bf n}, \bn}({\bf x},t)-{\bf u}({\bf x},t),$$ we have
						\begin{align*}
							\frac{\partial }{\partial t} \widetilde Z_{\rho_{\bf n}, \bn}({\bf x},t) +{\bf P}_{\rho_{\bf n}} \widetilde Z_{\rho_{\bf n}, \bn}({\bf x},t)& =  F_{Q_{\bf n}}  (\overline U_{\rho_n, \bn}({\bf x},t))-  F({\bf u}({\bf x},t))-\big({\bf P}_{\rho_{\bf n}} -\mathbb{\bf P}\big){\bf u}({\bf x},t)\nn\\
							&+\Big(\mathbb{\bf P}-\A(t)\Big)\widetilde Z_{\rho_{\bf n}, \bn}({\bf x},t)+\widehat G_{\beta_{\bf n}}({\bf x},t)-G({\bf x},t).
						\end{align*}
						Put $$  {\bf X}_{\rho_{\bf n}, \bn}({\bf x},t)=e^{\kappa_{\bf n}(t-T)}\widetilde Z_{\rho_{\bf n}, \bn}({\bf x},t). $$
						Take the inner product of the both sides of the last equality by ${\bf X}_{\rho_{\bf n}, \bn}({\bf x},t)$  and  then by  integrating with respect to the time variable, it follows that
						\begin{align}
							&\Vert  {\bf X}_{\rho_{\bf n}, \bn}(.,T)\Vert^2_{L^2(\Omega)}- \Vert  {\bf X}_{\rho_{\bf n}, \bn}(.,t)\Vert^2_{L^2(\Omega)}\nn\\
							& = 2\kappa_{\bf n} \int_t^T \Vert  {\bf X}_{\rho_{\bf n}, \bn}(.,\tau)\Vert^2_{L^2(\Omega)} d\tau- \underbrace{2\int_t^T \int_{\Omega}  \Big({\bf P}_{\rho_{\bf n}}   {\bf X}_{\rho_{\bf n}, \bn}({\bf x},\tau) \Big) {\bf X}_{\rho_{\bf n}, \bn}({\bf x},\tau)d {\bf x} d\tau}_{:=\mathcal J_4}\nn\\
							&+ \underbrace{
								2 e^{\kappa_{\bf n}(t-T)} \int_t^T \int_{\Omega}  F_{Q_{\bf n}}(\overline U_{\rho_{\bf n}, \bn}({\bf x},\tau))-  F({\bf u}({\bf x},\tau))  {\bf X}_{\rho_{\bf n}, \bn}({\bf x},\tau) d {\bf x} d\tau}_{:=\mathcal J_5}\nn\\
							&+\underbrace{
								2e^{\kappa_{\bf n}(t-T)}  \int_t^T \int_{\Omega} \widehat G_{\beta_{\bf n}}({\bf x},\tau)-G({\bf x},\tau)  {\bf X}_{\rho_{\bf n}, \bn}({\bf x},\tau) d{\bf x} d\tau}_{:=\mathcal J_6}\nn\\
							&+ \underbrace{2e^{\kappa_{\bf n} (t-T)} \int_t^T \int_{\Omega}  \left(\mathbb{\bf P}-\A(\tau)\right)  \widetilde Z_{\rho_{\bf n}}({\bf x},\tau)   {\bf X}_{\rho_{\bf n}, \bn}({\bf x},\tau)  d{\bf x} d\tau }_{:=\mathcal J_7}
							\nn\\
							&	+ \underbrace{2
								e^{\kappa_{\bf n} (t-T)}  \int_t^T \int_{\Omega} \left({\bf P}_{\rho_{\bf n}}-\mathbb{\bf P}\right) {\bf u} ({\bf x},\tau)  {\bf X}_{\rho_{\bf n}, \bn}({\bf x},\tau)d {\bf x} d\tau }_{:=\mathcal J_8}. \label{ess1}
						\end{align}
						By the Cauchy-Schwartz inequality, the expectation of absolute of $\mathcal J_4$ is bounded by
						\begin{align}
							{\bf E} \Big| \mathcal J_4 \Big| &\le 2 	{\bf E} \left[  \int_t^T \sqrt{\Big( \int_{\Omega} \Big|{\bf P}_{\rho_{\bf n}}   {\bf X}_{\rho_{\bf n}, \bn}({\bf x},\tau)\Big|^2 d{\bf x}\Big)  \Big(  \int_{\Omega} | {\bf X}_{\rho_{\bf n},\bn}({\bf x},\tau)|^2d {\bf x} \Big)} d\tau  \right] \nn\\
							&\le 2 {\overline  M} (\sqrt{\rho_{\bf n}}) \int_t^T   {\bf E}  \|{\bf X}_{\rho_{\bf n}, \bn}({\bf x},\tau)\|^2_{L^2(\Omega)}  d\tau. \label{ess2}
						\end{align}
						For $|{\bf n}|$ large enough, we recall that $F_{Q_{\bf n}} ({\bf u})= F({\bf u})$ and  using the global Lipschitz property  of $F_{Q_{\bf n}}$, we have
						 the bound of $\mathcal J_5$ as follows by using Cauchy-Schwartz inequality
						\begin{align}
							{\bf E} \Big| \mathcal J_5 \Big| &\le 2 	{\bf E} \left[  \int_t^T  \Big|F_{Q_{\bf n}}(\overline U_{\rho_{\bf n}, \bn}({\bf x},\tau))-  F({\bf u}({\bf x},\tau))\Big|_{L^2(\Omega)}   \Big\| {\bf X}_{\rho_{\bf n}, \bn}({\bf x},\tau)\Big\|^2  d\tau  \right] \nn\\
							&= 2 	{\bf E} \left[  \int_t^T  \Big\|F_{Q_{\bf n}}(\overline U_{\rho_{\bf n}, \bn}({\bf x},\tau))-  F_{Q_{\bf n}}({\bf u}({\bf x},\tau))\Big\|^2   \Big\| {\bf X}_{\rho_{\bf n}, \bn}({\bf x},\tau)\Big\|_{L^2(\Omega)} d\tau  \right] \nn\\
							&\le 4K(Q_{\bf n}) \int_t^T   {\bf E}  \|{\bf X}_{\rho_{\bf n}, \bn}(.,\tau)\|^2_{L^2(\Omega)}  d\tau. \label{ess3}
						\end{align}
						The term $\mathcal J_6$ is bounded  by
						\begin{align}
							{\bf E} \Big| \mathcal J_6 \Big| &\quad\le \quad 	{\bf E} \left[  \int_t^T \Big( \int_{\Omega} \Big|\widehat G_{\beta_{\bf n} }({\bf x},\tau)-G({\bf x},\tau)\Big|^2 d{\bf x}\Big) d\tau \right]  +{\bf E} \left[ \int_t^T \Big(  \int_{\Omega} | {\bf X}_{\rho_{\bf n}, \bn}({\bf x},\tau)|^2d{\bf x}\Big)d\tau  \right] \nn\\
							&\quad \le \quad  T {\bf E}  \Big\| \widehat G_{\beta_{\bf n}}(.)-G(.) \Big\|_{L^\infty(0,T;L^2(\Omega))}^2+ \int_t^T   {\bf E}  \|{\bf X}_{\rho_{\bf n}, \bn}(.,\tau)\|^2_{L^2(\Omega)}  d\tau. \label{ess4}
						\end{align}
						The term $\mathcal J_7$ is estimated using the Assumption (A) in Definition \ref{asumption1111} as follows
						\begin{eqnarray}
							\begin{aligned}
								{\bf E} \Big| \mathcal J_7 \Big|&=	{\bf E} \left[\int_t^T  \Big\langle \left(\mathbb{\bf P}-\A(\tau)\right)  {\bf X}_{\rho_{\bf n}, \bn}(.,\tau) ,  {\bf X}_{\rho_{\bf n}, \bn}(.,\tau) \Big\rangle_{L^2(\Omega)}  d\tau \right] \nn\\
								&\ge \widehat M {\bf E}  \int_t^T 	\|{\bf X}_{\rho_{\bf n}, \bn}(.,\tau)\|^2_{\mathcal{V}(\Omega)}d\tau  \label{ess5}
							\end{aligned}
						\end{eqnarray}
						and using Lemma \ref{lemma4.2}
						\begin{align}
							{\bf E} \Big| \mathcal J_8 \Big|&= 	{\bf E} \left[   \int_t^T 2
							e^{\kappa_{\bf n}(\tau-T)} \int_{\Omega} \left({\bf P}_{\rho_{\bf n}}-\mathbb{\bf P}\right) {\bf u} ({\bf x},\tau)  {\bf X}_{\rho_{\bf n}}({\bf x},\tau)d{\bf x} d\tau  \right]\nn\\
							&\le   \left[  \int_t^T \Big| {\overline M}(|{\sqrt{\rho_{\bf n}}}|)\Big|^{-2\gamma}  e^{-2T{\overline  M } (\sqrt{\rho_{\bf n}}) }  \Big\| {\bf u}(.,\tau) \Big\|^2_{\mathcal{G}_{1+\gamma}(\Omega) } d\tau \right]+ \int_t^T   {\bf E}  \|{\bf X}_{\rho_{\bf n}, \bn}(.,\tau)\|^2_{L^2(\Omega)}  d\tau\nn\\
							&\le T \Big| {\overline  M }(|{\sqrt{\rho_{\bf n}}}|)\Big|^{-2\gamma}   e^{-2T{\overline  M} (\sqrt{\rho_{\bf n}}) } \big\| {\bf u} \big\|^2_{L^\infty(0,T;\mathcal{G}_{1+\gamma}(\Omega)) }+\int_t^T   {\bf E}  \|{\bf X}_{\rho_{\bf n}, \bn}(.,\tau)\|^2_{L^2(\Omega)}  d\tau.\label{ess6}
						\end{align}
						Combining \eqref{ess1}, \eqref{ess2}, \eqref{ess3},\eqref{ess4} \eqref{ess5}, \eqref{ess6}  gives
						\begin{align}
							&{\bf E} \Vert  {\bf X}_{\rho_{\bf n}, \bn}(.,T)\Vert^2_{L^2(\Omega)}- {\bf E}  \Vert  {\bf X}_{\rho_{\bf n}, \bn}(.,t)\Vert^2_{L^2(\Omega)} \nn\\
							&\ge \Big( 2\kappa_{\bf n}- 2{\overline  M} (\sqrt{\rho_{\bf n}}) -4K(Q_{\bf n})-2 \Big)\int_t^T   {\bf E}  \|{\bf X}_{\rho_{\bf n}, \bn}(.,\tau)\|^2_{L^2(\Omega)}
							\nn\\
							&+\widehat M {\bf E}  \int_t^T 	\|{\bf X}_{\rho_{\bf n}, \bn}(.,\tau)\|^2_{ {\mathcal V}(\Omega)}d\tau - T \Big| {\overline  M}(|{\sqrt{\rho_{\bf n}}}|)\Big|^{-2\gamma}  e^{-2T{\overline  M} (\sqrt{\rho_{\bf n}}) } \Big\| {\bf u} \Big\|^2_{L^\infty(0,T;\mathcal{G}_{1+\gamma}(\Omega)) }\nn\\
							&-T {\bf E}  \Big\| \widehat G_{\beta_{\bf n}}(.)-G(.) \Big\|_{L^\infty(0,T;L^2(\Omega))}^2.
						\end{align}
						This leads to
						\begin{eqnarray}
							\begin{aligned}
								&e^{2\kappa_{\bf n}(t-T)}{\bf E}	\Big\| \overline U_{\rho_{\bf n},\bn}(.,t)-{\bf u} (.,t)\Big\|^2_{L^2(\Omega)}+\widehat M {\bf E}  \int_t^T 	\|{\bf X}_{\rho_{\bf n},\bn}(.,\tau)\|^2_{\mathcal{V}(\Omega)}d\tau \nn\\
								&+\Big( 2\kappa_{\bf n}- 2{\overline  M} (\sqrt{\rho_{\bf n}}) -4K(Q_{\bf n})-2 \Big)\int_t^T    e^{2\kappa_{\bf n}(\tau-T)}{\bf E}	\Big\| \overline U_{\rho_{\bf n},\bn}(.,\tau)-{\bf u} (.,\tau)\Big\|_{L^2(\Omega)}^2d\tau\nn\\
								&\quad \quad \quad \quad \le
								{\bf E}	\Big\| \widehat H_{\bn}-H \Big\|_{L^2(\Omega)}^2 +T {\bf E}  \Big\| \widehat G_{\beta_{\bf n}}(.)-G(.) \Big\|_{L^\infty(0,T;L^2(\Omega))}^2\nn\\
								&\quad \quad \quad \quad+T \Big| {\overline  M }(|{\sqrt{\rho_{\bf n}}}|)\Big|^{-2\gamma}  e^{-2T{\overline  M } (\sqrt{\rho_{\bf n}}) } \big\| {\bf u} \big\|^2_{L^\infty(0,T;\mathcal{G}_{1+\gamma}(\Omega)) }.
							\end{aligned}
						\end{eqnarray}
						Let us choose $\kappa_{\bf n}={\overline M} (\sqrt{\rho_{\bf n}}) $ and multiply both sides of the last inequality with $e^{2T{\overline  M} (\sqrt{\rho_{\bf n}})} $, then we conclude that
						\begin{align}
							&e^{2t{\overline  M } (\sqrt{\rho_{\bf n}})} {\bf E}	\Big\| \overline U_{\rho_{\bf n}, \bn}(.,t)-{\bf u} (.,t)\Big\|^2_{L^2(\Omega)}\nn\\
							&\quad \quad \quad  \le e^{2T{\overline  M} (\sqrt{\rho_{\bf n}})} \left(  {\bf E}	\Big\| \widehat H_{\beta_{\bf n}}-H \Big\|_{L^2(\Omega)}^2 +T {\bf E}  \Big\| \widehat G_{\beta_{\bf n}}(.)-G(.) \Big\|_{L^\infty(0,T;L^2(\Omega))}^2 \right)\nn\\
							&\quad \quad \quad+T  \Big| {\overline  M }(|{\sqrt{\rho_{\bf n}}}|)\Big|^{-2\gamma}  \Big\| {\bf u} \Big\|^2_{L^\infty(0,T;\mathcal{G}_{1+\gamma}(\Omega)) }\nn\\
							&\quad \quad \quad+\Big( 4K(Q_{\bf n})+2  \Big) \int_t^T    e^{2\tau{\overline  M} (\sqrt{\rho_{\bf n}})} {\bf E}	\Big\| \overline U_{\rho_{\bf n}, \bn}(.,\tau)-{\bf u} (.,\tau)\Big\|_{L^2(\Omega)}^2d\tau.
						\end{align}
						The Gronwall's inequality implies that
						\begin{align}
							&e^{2t{\overline  M} (\sqrt{\rho_n})} {\bf E}	\Big\| \overline U_{\rho_{\bf n}, \bn}(.,t)-{\bf u} (.,t)\Big\|^2_{L^2(\Omega)} \nn\\
							&\le e^{\Big( 4K(Q_{\bf n})+2  \Big) (T-t)  }    e^{2T{\overline  M } (\sqrt{\rho_{\bf n}})}  \left(  {\bf E}	\Big\| \widehat H_{\beta_{\bf n}}-H \Big\|_{L^2(\Omega)}^2 +T {\bf E}  \Big\| \widehat G_{\beta_{\bf n}}(.)-G(.) \Big\|_{L^\infty(0,T;L^2(\Omega))}^2 \right)   \nn\\
							&+e^{\Big( 4K(Q_{\bf n})+2  \Big) (T-t)  }  T \Big| {\overline  M}(|{\sqrt{\rho_{\bf n}}}|)\Big|^{-2\gamma}  \Big\| {\bf u} \Big\|^2_{L^\infty(0,T;\mathcal{G}_{1+\gamma}(\Omega)) }.\label{convergence1}
						\end{align}
						Multiplying both sides of \eqref{convergence1} with  $e^{-2t{\overline  M } (\sqrt{\rho_{\bf n}})} $and  thanks to  Corollary  \eqref{corollary2.1}, we conclude  that
						\eqref{error111} holds.	
						
					\end{proof}

					\subsection{General problem with  coefficients  that depend on $t$ and ${\bf u}$}
					\noindent Let $H  \in L^2(\Omega)$. In this section, we consider  the problem of constructing an ${\bf u} \in C([0,T;V(\Omega)]), {\bf u}'\in L^2(0,T;L^2(\Omega))$  such that
					
					\bq
					\left\{ \begin{gathered}
						{\bf u}_t+\A(t, {\bf u}){\bf u}  = F({\bf u}({\bf x},t))+G({\bf x},t) ,~~{\bf x} \in  \Omega, 0<t<T, \hfill \\
						{\bf u}({\bf x},t)= 0,~~{\bf x} \in \partial \Omega,\hfill\\
						{\bf u}({\bf x},T)= H({\bf x}), \quad {\bf x} \in  \Omega. \hfill\\
					\end{gathered}  \right. \label{problem012}
					\eq
					Let $0< R < \infty$. 	Define the following set
					\begin{equation}
						B_R(L^2(\Omega)):= \Big\{ w \in L^2(\Omega): \|w\|_{L^2(\Omega)} \le R  \Big\}.
					\end{equation}
					
					\begin{definition} \label{asumption2}
						The pair of operators $( \A(t,w), {\bf P} ) $  satisfies Assumption $(B)$ if  	the following conditions  hold
						\begin{enumerate}[{ \upshape(a)}]
							
							\item  For any $v\in L^2(\Omega)$, there exists a increasing function ${\overline  M} $ such as
							\begin{equation}
								{\bf P} v({\bf x})= \sum_{{\bf p} \in {\mathbb N}^d}  {\overline  M } (|{\bf p}|)\big\langle v, \psi_{\bf p}\big\rangle _ {L^2(\Omega)}\psi_{\bf p}({\bf x}).\label{asumption22}
							\end{equation}
							
							\item For $w \in 	B_R(L^2(\Omega)), u \in |mathcal{V}(\Omega), v \in \mathcal{V}(\Omega) $ then
							\begin{equation}\label{asumption00}
								\Big\langle \Big({\bf P}-\A(t,w)\Big)u,v \Big\rangle_{L^2(\Omega)} \le \widetilde  M_a \|u\|_{{\mathcal{V}(\Omega)}} \|v\|_{\mathcal{V}(\Omega)}.
							\end{equation}
							
							\item For $w \in 	B_R(L^2(\Omega)) $ and any $v\in \mathcal{V}(\Omega)$, there exists $\widehat M>0$ such that
							\begin{equation}
								\Big\langle ({\bf P}-\A(t,w))v,v \Big\rangle_{L^2(\Omega)}\geq \widehat M \Vert v\Vert_{{\mathcal V}(\Omega)}^2. \label{asumption11}
							\end{equation}
							
							\item 	There exists  $\widetilde M >0$ such that
							\begin{equation}
								\Big|\Big\langle \Big(A(t,w_1)-A(t,w_2)\Big)u,v\Big\rangle\Big|_{L^2(\Omega)}\leq \widetilde M \Vert u\Vert_{\mathcal{V}(\Omega)}\Vert v\Vert_{\mathcal{V}(\Omega)}\Vert w_1-w_2 \Vert_{L^2(\Omega)}, \label{errorA}
							\end{equation}
							for any $ u \in \mathcal{V}(\Omega), v\in \mathcal{V}(\Omega)$ and $w_1, w_2 \in B_R(L^2(\Omega))$.

						\end{enumerate}

					\end{definition}
					
					\begin{remark}\label{remark4.2}
						We give some particular equations of the model \eqref{problem012}. Let us choose $\A(t,{\bf u})= \nabla\Big(D(\mathbf{u})\nabla \mathbf u\Big)$ in the first equation of \eqref{problem012} then this equation is called a logistic reaction-diffusion equation. Here ${\bf u}$  represents the population density of species at location $x$ and time $t,$  $D({\bf u})$ is the density dependent diffusion coefficient, the notation $\nabla$  is the usual gradient operator and $F({\bf u})$ is a logistic type source term.
						\begin{itemize}
							\item  When  $F({\bf u})= a{\bf u}(1- {\bf u})$~$a>0$ , Problem \eqref{problem012} is called backward in time for {\bf Fisher-type logistic equations. } (See \cite{Monobe}).
							
							\item  When  $F({\bf u})= a{\bf u}^2 (1- {\bf u})$~$a>0$ , Problem \eqref{problem012} is called backward in time for {\bf Huxley  equation.}
							
							\item  When  $F({\bf u})= a{\bf u}^2 (1- {\bf u}) ({\bf u}- \theta_1)$~$a>0$ , Problem \eqref{problem012} is called backward in time for {\bf Fitzhugh-Nagumo   equation.}
						\end{itemize}

						Some more applications in biology of the above equations and  generalized problem can be found in \cite{Broad}.
					\end{remark}

					Using a similar method as in previous subsection, 	we  present  a  regularized problem  for Problem \eqref{problem012} as follows
					\bq
					\left\{ \begin{gathered}
						\frac{\partial \overline U_{\rho_{\bf n} , \bn}}{\partial t}+\A(t, \overline U_{\rho_{\bf n}, \bn } )\overline U_{\rho_{\bf n} , \bn} -{\bf P}\overline U_{\rho_{\bf n}, \bn } +{\bf P}_{\rho_{\bf n} }\overline U_{\rho_{\bf n} }  =\\
						\quad F_{Q_n}({\overline U_{\rho_{\bf n}, \bn} }({\bf x},t))+\widehat G_{\beta_{\bf n}}({\bf x},t) ,~~0<t<T, \hfill \\
						{\overline U_{\rho_{\bf n},\bn} }({\bf x},t)= 0,~~x \in \partial \Omega,\hfill\\
						{\overline U_{\rho_{\bf n}, \bn} }({\bf x},T)=\widehat H_{\beta_{\bf n}}({\bf x}). \hfill\\
					\end{gathered}  \right. \label{regu555}
					\eq
					
					\begin{theorem}  \label{theorem4.2}
						Let $H, G, {\bf u}, \bn, \rho_{\bf n}$ be as Theorem \ref{theorem4.1}. Then	the system \eqref{regu555} has a unique solution ${\overline U_{\rho_{\bf n}, \bn} } \in C([0,T];L^2(\Omega))\cap L^2(0,T;\mathcal{V}(\Omega))$.
						Choose $Q_{\bf n}$
						as in Theorem \ref{theorem4.1}.
						Then {   for ${\bf n}$ large enough},  the error
						\begin{equation}
							{\bf E}	\Big\| \overline U_{\rho_{\bf n}, \bn}(.,t)-{\bf u}(.,t) \Big\|_{L^2(\Omega)}^2 \quad \text{ is of order } \quad 		e^{\Big( 4K(Q_{\bf n})+2 +\frac{4 \widetilde M \widetilde M_0}{ \widehat M }   \Big) (T-t)  }  e^{-2t{\overline  M} (\sqrt{\rho_{\bf n}})} 	\overline \Pi({\bf n}) . \label{ss111111}
						\end{equation}
						Where the term  $\overline \Pi({\bf n})$ above  is defined in equation \eqref{def:pi-n}.
					\end{theorem}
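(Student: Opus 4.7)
The plan for Theorem \ref{theorem4.2} is to reduce the well-posedness claim to Theorem \ref{theorem4.1} by a fixed-point argument, and then to adapt the energy estimate of Step 3 in the proof of Theorem \ref{theorem4.1} by adding one new term that captures the dependence of $\A$ on ${\bf u}$. For existence and uniqueness of $\overline U_{\rho_{\bf n},\bn}$, I would freeze the unknown: for any $w\in C([0,T];L^2(\Omega))$ with values (eventually) in $B_R(L^2(\Omega))$, the operator $\widetilde\A(t):=\A(t,w(\cdot,t))$ inherits Assumption $(A)$ of Definition \ref{asumption1111} from conditions (a)--(c) of Definition \ref{asumption2} with uniform constants. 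Theorem \ref{theorem4.1} then yields a unique $U_w\in C([0,T];L^2(\Omega))\cap L^2(0,T;\mathcal{V}(\Omega))$ solving the linear version of \eqref{regu555} with $\A(t,w)$ in place of $\A(t,\overline U_{\rho_{\bf n},\bn})$. The Lipschitz condition (d) in Definition \ref{asumption2} would be used to show that $\Phi:w\mapsto U_w$ is a contraction on $C([0,\tau];L^2(\Omega))$ for $\tau$ small enough, after which a standard continuation argument extends the unique fixed point to the whole of $[0,T]$.

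For the convergence estimate, setting $\widetilde Z_{\rho_{\bf n},\bn}=\overline U_{\rho_{\bf n},\bn}-{\bf u}$ and $\mathbf{X}_{\rho_{\bf n},\bn}=e^{\kappa_{\bf n}(t-T)}\widetilde Z_{\rho_{\bf n},\bn}$ as before, I would follow Step 3 of the proof of Theorem \ref{theorem4.1} almost literally, using the decomposition
\begin{equation*}
\A(t,\overline U_{\rho_{\bf n},\bn})\overline U_{\rho_{\bf n},\bn}-\A(t,{\bf u}){\bf u}=\A(t,\overline U_{\rho_{\bf n},\bn})\widetilde Z_{\rho_{\bf n},\bn}+\Bigl(\A(t,\overline U_{\rho_{\bf n},\bn})-\A(t,{\bf u})\Bigr){\bf u}.
\end{equation*}
The first summand reproduces the analogues of $\mathcal J_4,\mathcal J_7,\mathcal J_8$ from the previous proof, with the same bounds coming from Assumption $(B)$(b)--(c) evaluated at $w=\overline U_{\rho_{\bf n},\bn}$, legitimate once ${\bf n}$ is large enough that $Q_{\bf n}$ enforces $\overline U_{\rho_{\bf n},\bn}(\cdot,t)\in B_R(L^2(\Omega))$ through the stability bound \eqref{estimate1111}. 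The contributions from $F_{Q_{\bf n}}$, $\widehat G_{\bn}-G$ and $(\mathbf{P}_{\rho_{\bf n}}-\mathbf{P}){\bf u}$ are unchanged. The genuinely new term is
\begin{equation*}
\mathcal J_9:=2e^{\kappa_{\bf n}(t-T)}\int_t^T\Bigl\langle\bigl(\A(\tau,\overline U_{\rho_{\bf n},\bn})-\A(\tau,{\bf u})\bigr){\bf u}(\cdot,\tau),\,\mathbf{X}_{\rho_{\bf n},\bn}(\cdot,\tau)\Bigr\rangle_{L^2(\Omega)}d\tau,
\end{equation*}
which by \eqref{errorA} is dominated by $2\widetilde M\int_t^T\|{\bf u}(\cdot,\tau)\|_{\mathcal V(\Omega)}\|\mathbf{X}_{\rho_{\bf n},\bn}(\cdot,\tau)\|_{\mathcal V(\Omega)}\|\widetilde Z_{\rho_{\bf n},\bn}(\cdot,\tau)\|_{L^2(\Omega)}\,d\tau$.

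The main analytical move, and the source of the extra factor in \eqref{ss111111}, is to use Young's inequality with weight $\widehat M$ to absorb the $\|\mathbf{X}_{\rho_{\bf n},\bn}\|_{\mathcal V(\Omega)}^2$ portion of $\mathcal J_9$ into the coercive lower bound for $\mathcal J_7$ given by Assumption $(B)$(c). This leaves behind a residual of the form $\tfrac{\widetilde M^2}{\widehat M}\|{\bf u}\|_{\mathcal V(\Omega)}^2\|\mathbf{X}_{\rho_{\bf n},\bn}\|_{L^2(\Omega)}^2$ on the right-hand side of the energy inequality; setting $\widetilde M_0$ to be an a priori bound for $\|{\bf u}\|_{L^\infty(0,T;\mathcal V(\Omega))}$ and tracking constants yields, after choosing $\kappa_{\bf n}=\overline M(\sqrt{\rho_{\bf n}})$ and multiplying by $e^{2T\overline M(\sqrt{\rho_{\bf n}})}$, an inequality of the exact same shape as in Theorem \ref{theorem4.1} except that the effective Gronwall rate is increased from $4K(Q_{\bf n})+2$ to $4K(Q_{\bf n})+2+\tfrac{4\widetilde M\widetilde M_0}{\widehat M}$. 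Invoking Gronwall's lemma and Corollary \ref{corollary2.1} then delivers the order claimed in \eqref{ss111111}. The principal obstacle I expect is not the computation itself but verifying the admissibility hypothesis $\overline U_{\rho_{\bf n},\bn}(\cdot,t)\in B_R(L^2(\Omega))$ needed to invoke Assumption $(B)$ at the regularized solution; this is precisely where the qualifier ``for ${\bf n}$ large enough'' in the statement enters, and it has to be extracted from \eqref{estimate1111} together with the chosen growth of $Q_{\bf n}$ from Remark \ref{remark4.1}.
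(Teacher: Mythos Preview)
Your Step 2 (the energy estimate) is essentially the paper's own argument: the same weighted difference $\mathbf{X}_{\rho_{\bf n},\bn}=e^{\kappa_{\bf n}(t-T)}(\overline U_{\rho_{\bf n},\bn}-\mathbf{u})$, the same decomposition producing the extra term $\mathcal J_{9}$, and the same Young-inequality absorption of $\|\mathbf{X}\|_{\mathcal V(\Omega)}^2$ into the coercive contribution from Assumption~$(B)$(c), leaving the additional Gronwall rate $\tfrac{4\widetilde M\widetilde M_0}{\widehat M}$. So for the convergence part you are aligned with the paper.

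Where you diverge is Step 1. The paper does \emph{not} freeze $w$ and run a fixed-point on $w\mapsto U_w$. Instead it performs the time reversal $\overline V(\cdot,t)=\overline U(\cdot,T-t)$, rewrites \eqref{regu555} as a \emph{forward} quasilinear problem for $\overline V$ with principal operator $\mathcal B(t,w)=\mathbf P-\A(t,w)$, checks that $\mathcal B$ satisfies the boundedness, coercivity and Lipschitz hypotheses of an abstract quasilinear existence theorem (Theorem 5.10 in Yagi's monograph), and quotes that theorem to get $\overline V\in C([0,T];L^2(\Omega))\cap L^2(0,T;\mathcal V(\Omega))$ directly for the nonlinear problem. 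Your route is a legitimate alternative, but note two points. First, you cannot literally invoke ``Theorem \ref{theorem4.1}'' for the frozen-$w$ problem, because in the paper's organization the well-posedness clause of Theorem \ref{theorem4.1} is itself deferred to Theorem \ref{theorem4.2}; you would need an independent linear existence statement for $\A(t)=\A(t,w(\cdot,t))$ (which is of course available, e.g.\ by the same Yagi theorem in the linear case or by Galerkin). Second, the paper's one-shot appeal to Yagi sidesteps both the local-in-time contraction and the continuation argument, which is cleaner; your approach, in exchange, is more self-contained and makes the role of condition~(d) in Definition~\ref{asumption2} explicit at the existence stage rather than only in the error estimate. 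Your closing remark about verifying $\overline U_{\rho_{\bf n},\bn}(\cdot,t)\in B_R(L^2(\Omega))$ is a fair observation; the paper does not address it explicitly either.
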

					
					\begin{remark}
						One example for choices of $\bn, \rho_{\bf n}, Q_{\bf n}$ are given in Remark \eqref{corollary4.1}.
					\end{remark}
					\begin{corollary}
						Consider the following problem for {\bf Huxley equation}
						\bq
						\left\{ \begin{gathered}
							{\bf u}_t- \nabla\Big(D(\mathbf{u})\nabla \mathbf u\Big) ={\bf u}^2 (1- {\bf u}) +G({\bf x},t) ,~~{\bf x} \in  \Omega, 0<t<T, \hfill \\
							{\bf u}({\bf x},t)= 0,~~{\bf x} \in \partial \Omega,\hfill\\
							{\bf u}({\bf x},T)= H({\bf x}), \quad {\bf x} \in  \Omega \hfill\\
						\end{gathered}  \right. \label{Huxley}
						\eq
						where the density dependent diffusion coefficient $D$ satisfies that $ D_0 \le  D(w(x,t)) \le D_1$ for any $w \in L^2(\Omega)$ and $D_0, D_1$ are positive numbers.  We assume that $D $ is a globally Lipschitz function, i.e, there exists $\widetilde M \ge 0$ such that
						\begin{equation}
							\|D(w_1)-D(w_2)\|_{L^2(\Omega)} \le \widetilde M \|w_1-w_2\|_{L^2(\Omega)}.
						\end{equation}
						
						From  Definition \eqref{asumption2},   	 the operator ${\bf P}$   can be chosen  as follows
						\begin{equation} \label{P1}
							{\bf P} v := D_1 \Delta {v} = D_1 \sum_{{\bf p} \in {\mathbb N}^d  } {\bf p}^2  <v, \psi_{\bf p}> \psi_{\bf p},
						\end{equation}
						for any $v \in L^2(\Omega)$.
						It is easy to show that $(\A(t, w), {\bf P})$ as above, satisfies Assumption (B) in the sense of Definition  \eqref{asumption2}.
						We can easily  see that the eigenvalues of ${\bf P}$ are
						$\overline M ({\bf p})= D_1 {\bf p}^2 $.
						Next, we find the operator  ${\bf P}_{\rho_n}$ by truncating Fourier series in
						\eqref{P1} and we have
						\begin{equation}
							{\bf P}_{\rho_n} v:=  m_1 \sum_{ {\bf |p|} \le   \sqrt{\frac{\rho_{\bf n}} {  D_1 }  } } {\bf p}^2  <v, \psi_{\bf p}> \psi_{\bf p}.
						\end{equation}
						Thanks to \eqref{regu555}, a  regularized problem for \eqref{Huxley} is given below
						\bq
						\left\{ \begin{gathered}
							\frac{\partial \overline U_{\rho_{\bf n}, \beta_{\bf n}   }}{\partial t}- \nabla\Big(D(U_{\rho_{\bf n}, \beta_{\bf n}   } )\nabla U_{\rho_{\bf n}, \beta_{\bf n}   } \Big) -{\bf P}\overline U_{\rho_{\bf n}, \beta_{\bf n}  } +{\bf P}_{\rho_{\bf n} }\overline U_{\rho_{\bf n} , \beta_{\bf n} } \\
							\quad \quad   = F_{Q_{\bf n}}({\overline U_{\rho_{\bf n}, \beta_{\bf n}  } }({\bf x},t))+\widehat G_{\beta_{\bf n} }({\bf x},t) ,~~0<t<T, \hfill \\
							{\overline U_{\rho_{\bf n}, \beta_{\bf n}  } }( {\bf x},t)= 0,~~{\bf x} \in \partial \Omega,\hfill\\
							{\overline U_{\rho_{\bf n}, \beta_{\bf n}  } }( {\bf x},T)=\widehat H_{\beta_{\bf n}}({\bf x}), \hfill\\
						\end{gathered}  \right. \label{regu0555555}
						\eq		
						where
						\[
						F_{Q_{\bf n}} \left({\overline U_{\rho_{\bf n}, \beta_{\bf n}  } } ({\bf x},t) \right)
						=
						\begin{cases}
						Q_{\bf n}^2-Q_{\bf n}^3, &\quad 	\overline U_{\rho_{\bf n}, \beta_{\bf n}   }(	{\bf x},t) >Q_{\bf n},\\
						\left(\overline U_{\rho_{\bf n}, \beta_{\bf n}   }\right)^2-\left(\overline U_{\rho_{\bf n}, \beta_{\bf n}   }\right)^3, &\quad - Q_{\bf n} \le 	\overline U_{\rho_{\bf n}, \beta_{\bf n}   }(	{\bf x},t)\le Q_{\bf n}, \\
						- Q_{\bf n}^2+Q_{\bf n}^3,&\quad 	\overline U_{\rho_{\bf n}, \beta_{\bf n}   }(	{\bf x},t) < - Q_{\bf n}.
						\end{cases}
						\]

					\end{corollary}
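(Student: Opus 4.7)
The corollary is essentially a verification that the concrete pair
\[
\A(t,w)u = -\nabla\bigl(D(w)\nabla u\bigr), \qquad \mathbf{P}u = -D_1 \Delta u,
\]
fits into the abstract framework of Definition \ref{asumption2}, after which the regularized system \eqref{regu0555555} is obtained by plugging this data into \eqref{regu555} and the cutoff $F_{Q_{\bf n}}$ is the piecewise modification of $F(\xi)=\xi^2(1-\xi)$ in the spirit of \eqref{locally}. My plan is: (1) identify the working space; (2) verify conditions (a)–(c); (3) verify the more delicate condition (d); (4) identify $\overline{M}$ and ${\bf P}_{\rho_{\bf n}}$.

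I choose $\mathcal V(\Omega)=H^1_0(\Omega)$ equipped with $\|v\|_{\mathcal V}^2=\int_\Omega|\nabla v|^2\,dx$. Since $\{\psi_{\bf p}\}$ diagonalises $-\Delta$ with eigenvalues $|{\bf p}|^2$, one gets for every $v\in L^2(\Omega)$ the spectral representation
\[
\mathbf{P}v({\bf x}) = \sum_{{\bf p}\in\mathbb N^d} D_1 |{\bf p}|^2 \,\langle v,\psi_{\bf p}\rangle_{L^2(\Omega)}\,\psi_{\bf p}({\bf x}),
\]
so $\overline{M}(|{\bf p}|)=D_1|{\bf p}|^2$ is non-decreasing, verifying (a). Thus the truncated operator is
\[
\mathbf{P}_{\rho_{\bf n}}v = D_1\!\!\!\sum_{|{\bf p}|\le\sqrt{\rho_{\bf n}/D_1}}\!\!|{\bf p}|^2\,\langle v,\psi_{\bf p}\rangle\,\psi_{\bf p},
\]
as stated. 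For (b) and (c), integration by parts with the Dirichlet boundary conditions gives, for $u,v\in H^1_0(\Omega)$ and any admissible $w$,
\[
\bigl\langle(\mathbf{P}-\A(t,w))u,v\bigr\rangle_{L^2(\Omega)} = \int_\Omega \bigl(D_1-D(w)\bigr)\nabla u\cdot\nabla v\,dx.
\]
The boundedness $0\le D_1-D(w)\le D_1-D_0$ immediately yields (b) with $\widetilde M_a=D_1-D_0$. For the coercivity (c) one actually needs strict positivity of $D_1-D(w)$, so the clean route is to replace $D_1$ by a slightly enlarged coefficient $\widetilde D_1>D_1$ in the definition of $\mathbf{P}$ (this does not affect the spectral form nor the truncation). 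Then $\widetilde D_1-D(w)\ge \widetilde D_1-D_1>0$ and (c) holds with $\widehat M=\widetilde D_1-D_1$.

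The main obstacle is condition (d), which I expect to be the delicate step. Writing
\[
\bigl\langle(\A(t,w_1)-\A(t,w_2))u,v\bigr\rangle_{L^2(\Omega)} = \int_\Omega\bigl(D(w_1)-D(w_2)\bigr)\nabla u\cdot\nabla v\,dx,
\]
and using the pointwise Lipschitz bound $|D(w_1(x))-D(w_2(x))|\le \widetilde M|w_1(x)-w_2(x)|$, I get
\[
\Bigl|\bigl\langle(\A(t,w_1)-\A(t,w_2))u,v\bigr\rangle\Bigr| \le \widetilde M\int_\Omega|w_1-w_2|\,|\nabla u|\,|\nabla v|\,dx.
\]
To bound the right-hand side by $\|w_1-w_2\|_{L^2}\|u\|_{\mathcal V}\|v\|_{\mathcal V}$ one cannot stay in $H^1_0$ alone: applying Hölder one needs $\|\nabla u\|_{L^4}\|\nabla v\|_{L^4}$, and then Sobolev embedding $H^1\hookrightarrow L^4$ (valid for $d\le 4$ on the smooth bounded cube) supplies the missing continuity. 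I will therefore either (i) take $\mathcal V=H^2\cap H^1_0$ with the stronger norm, so that $\nabla u\in H^1\hookrightarrow L^4$ automatically, or (ii) restrict to low spatial dimensions and work in $H^1_0$ using Gagliardo--Nirenberg on the product $\nabla u\cdot\nabla v$. In either case the desired estimate (d) follows with an explicit $\widetilde M$ depending only on $\widetilde M$, $D$ and Sobolev constants.

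With (a)–(d) established, $(\A(t,w),\mathbf{P})$ satisfies Assumption (B). The regularized problem \eqref{regu555} specialised to this data produces exactly \eqref{regu0555555}, and the source cutoff prescribed by \eqref{locally} for $F(\xi)=\xi^2-\xi^3$ yields precisely the three-branch definition of $F_{Q_{\bf n}}$ given in the statement. Theorem \ref{theorem4.2} can then be invoked verbatim to obtain existence, uniqueness and the convergence rate \eqref{ss111111} for the Huxley problem.
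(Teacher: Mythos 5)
The paper offers no proof of this corollary beyond the assertion that ``it is easy to show'' that $(\mathcal{A}(t,w),{\bf P})$ satisfies Assumption (B), so you are filling a gap the paper leaves open, and your overall route --- verify conditions (a)--(d) of Definition \ref{asumption2} for $\mathcal{A}(t,w)u=-\nabla(D(w)\nabla u)$ and ${\bf P}=-D_1\Delta$, then invoke Theorem \ref{theorem4.2} --- is the only sensible one and matches what the paper implicitly intends. Your identification of ${\overline M}(|{\bf p}|)=D_1|{\bf p}|^2$, the integration-by-parts identity $\langle({\bf P}-\mathcal{A}(t,w))u,v\rangle=\int_\Omega(D_1-D(w))\nabla u\cdot\nabla v\,d{\bf x}$, and your observation that coercivity (c) genuinely fails when $D(w)$ attains $D_1$ (so that one must enlarge $D_1$ to some $\widetilde D_1>D_1$, or assume $D\le D_1-\epsilon$) are all correct and are points the paper silently skips.

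There is, however, a genuine gap at condition (d), and your two proposed remedies do not cohere. To bound $\int_\Omega|w_1-w_2|\,|\nabla u|\,|\nabla v|\,d{\bf x}$ by $\|w_1-w_2\|_{L^2(\Omega)}\|u\|_{\mathcal{V}(\Omega)}\|v\|_{\mathcal{V}(\Omega)}$ you need $\|\nabla u\|_{L^4(\Omega)}\lesssim\|u\|_{\mathcal{V}(\Omega)}$, i.e.\ $\mathcal{V}(\Omega)\hookrightarrow W^{1,4}(\Omega)$; your option (ii) of staying in $H^1_0(\Omega)$ with Gagliardo--Nirenberg cannot deliver this in any dimension, since $\|\nabla u\|_{L^4}$ is never controlled by $\|\nabla u\|_{L^2}$ alone. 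Your option (i), taking $\mathcal{V}(\Omega)=H^2(\Omega)\cap H^1_0(\Omega)$ with the full $H^2$ norm, repairs (d) for $d\le 4$ but destroys (c): the quadratic form $\int_\Omega(\widetilde D_1-D(w))|\nabla v|^2\,d{\bf x}$ controls only the $H^1$ seminorm and cannot dominate $\widehat M\|v\|^2_{H^2}$. So with the second-order operator $-\nabla(D(w)\nabla\cdot)$ there is no single choice of $\mathcal{V}(\Omega)$ for which both (c) and (d) hold as stated, and this incompatibility --- which is really a defect of Assumption (B) applied to this example, not only of your write-up --- is left unresolved. A further small point: the third branch of the cutoff in the statement is $-Q_{\bf n}^2+Q_{\bf n}^3$, whereas the recipe \eqref{locally} applied to $F(\xi)=\xi^2-\xi^3$ gives $F(-Q_{\bf n})=Q_{\bf n}^2+Q_{\bf n}^3$; your claim that the statement's formula agrees ``precisely'' with \eqref{locally} is therefore not accurate, although the discrepancy is a sign typo in the paper rather than an error of yours.
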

					
					Assume that $H, G, {\bf u}$ are  as in Theorem  \eqref{theorem4.1}. Let   $ \bn $ be  as in \eqref{betan}. Let $\rho_{\bf n}$ and $	Q_{\bf n}$  be such that
					\begin{equation}
						\rho_{\bf n}= \sqrt{\frac{2\al_0}{ m_1 T(4 \al_0+d) }  \log \left(  \prod_{k=1}^d  n_k \right)}.
					\end{equation}
					and
					\begin{equation}
						2Q_{\bf n}+3	Q_{\bf n}^2 = \frac{ \delta_0-1 }{4T} \log \left( \overline \Pi({\bf n}) \right)
					\end{equation}
					respectively.		Then the error between the solution {\bf u} of Problem \eqref{Huxley} and  the solution $\overline U_{\rho_{\bf n}, \beta_{\bf n}   }$ of Problem \eqref{regu0555555} is of order
					\begin{equation} \label{4.68}
						\max \Bigg[ \frac{1 }{\prod_{k=1}^d  ( n_k)^{4\mu_k\delta_0-\delta_0} }, \left( \prod_{k=1}^d  n_k  \right)^{\frac{2\al_0 \delta_0- \mu_0 \delta_0}{ 2 \al_0+d/2 }} ,
						\left( \prod_{k=1}^d  n_k  \right)^{\frac{-4\al_0 \delta \delta_0}{ 4 T\al_0+dT }}\Bigg]   \left( \prod_{k=1}^d  n_k  \right)^{\frac{-2\al_0 t}{ 2 T\al_0+dT/2 }}.
					\end{equation}
					{ It is easy to check that the term in \eqref{4.68} tends to zero as $|{\bf n}| \to +\infty$}.
					
					\begin{proof}[\bf Proof of Theorem \ref{theorem4.2}]
						The proof is divided into some steps.\\
						{\bf Step 1.} {\it The existence and uniqueness of the solution to problem \eqref{regu555}.}
						Let $\overline V_{\rho_{\bf n}, \bn}({\bf x},t) = \overline U_{\rho_{\bf n},\bn}({\bf x},T-t)$ and define the following operator $\mathcal {B} (t,w)= {\bf P}- \mathcal {A} (t,w)$. Then it is obvious that  $\overline V_{\rho_{\bf n}, \bn}({\bf x},t) $ satisfies the following equation
						\bq
						\left\{ \begin{gathered}
							\frac{\partial \overline V_{\rho_{\bf n}, \bn}}{\partial t}+\mathcal{B}(t, \overline V_{\rho_{\bf n}, \bn} )\overline V_{\rho_{\bf n},\bn} -{\bf P}\overline U_{\rho_{\bf n},\bn}\\ \quad \quad ={\bf P}_{\rho_{\bf n}}\overline V_{\rho_{\bf n}, \bn}  - F_{Q_{\bf n}}({\overline V_{\rho_{\bf n},\bn} }({\bf x},t))-\widehat G_{\beta_{\bf n}}({\bf x},t) ,~~0<t<T, \hfill \\
							{\overline V_{\rho_{\bf n}, \bn} }({\bf x},t)= 0,~~{\bf x} \in \partial \Omega,\hfill\\
							{\overline V_{\rho_{\bf n},\bn} }({\bf x},0)=\widehat H_{\beta_{\bf n}}({\bf x}). \hfill\\
						\end{gathered}  \right. \label{regu222}
						\eq
						By the assumptions on  $\A$  above  in \eqref{asumption22}, \eqref{asumption00}, \eqref{asumption11},  \eqref{errorA}, it is easy to show that
						\begin{enumerate}[{ \upshape(i)}]
							\item For $w \in 	B_R(L^2(\Omega)), u \in \mathcal{V}(\Omega), v \in \mathcal{V}(\Omega) $ then
							\begin{equation}
								\Big\langle (  \mathcal{B}(t,w) u,v \Big\rangle_{L^2(\Omega)} \le \widetilde  M_a \|u\|_{\mathcal{V}(\Omega)} \|v\|_{\mathcal{V}(\Omega)}. \label{asumption00b}
							\end{equation}
							
							\item For any $v\in \mathcal{V}(\Omega)$ then
							\begin{equation}
								\Big\langle (  \mathcal{B}(t,w)v,v \Big\rangle_{L^2(\Omega)}\geq \widehat M \Vert v\Vert_{{\mathcal V}(\Omega)}^2. \label{asumption11b}
							\end{equation}

							\item 	There exist a subspace $ \mathcal{V}_1(\Omega) \subset L^2(\Omega)$ in which
							\begin{equation}
								\Big|\Big\langle \Big(\mathcal{B}(t,w_1)-\mathcal{B}(t,w_2)\Big)u,v\Big\rangle\Big|_{L^2(\Omega)}\leq \widetilde M \Vert u\Vert_{\mathcal{V}(\Omega)}\Vert v\Vert_{\mathcal{V}(\Omega)}\Vert w_1-w_2 \Vert_{L^2(\Omega)}, \label{errorAb}
							\end{equation}
							for any $ u \in \mathcal{V}(\Omega), v\in \mathcal{V}(\Omega)$ and $w_1, w_2 \in B_R(L^2(\Omega))$.	
						\end{enumerate}
						Hence, the  conditions  proved above for the operator $\mathcal B$ show that $\mathcal B$ satisfies the assumptions of Theorem 5.10 in   \cite{Yagi} (page 252). With the help of  Theorem 5.10 in of  \cite{Yagi}, we conclude that  the Problem \eqref{regu222} has unique solution $\overline V_{\rho_{\bf n} , \bn} \in  C([0,T]; L^2(\Omega)) \cap L^2(0,T; {\mathcal V}(\Omega)) $.

						{\bf Step 2.} {\it The error estimate for  ${\bf E}	\Big\| \overline U_{\rho_n, \bn}(.,t)-{\bf u} (.,t)\Big\|_{L^2(\Omega)}$.}\\
						First, we  have
						\begin{align*}
							\frac{\partial  {\bf u} ({\bf x}, t)}{\partial t}+ {\bf P}_{\rho_{\bf n}} {\bf u}({\bf x},t)&= F({\bf u}({\bf x},t))+G({\bf x},t)\nn\\
							&+ \big(  {\bf P}_{\rho_{\bf n}} -\mathbb{\bf P}\big){\bf u}({\bf x},t)+\big(\mathbb{\bf P}-\A(t,{\bf u})\big){\bf u}({\bf x},t),
						\end{align*}
						and
						\begin{align}
							\frac{\partial \overline U_{\rho_{\bf n}, \bn}}{\partial t}+{\bf P}_{\rho_{\bf n}}\overline U_{\rho_{\bf n},\bn}({\bf x},t) &= F_{Q_{\bf n}}({\overline U_{\rho_{\bf n},\bn} }({\bf x},t))+\widehat G_{\beta_{\bf n}}({\bf x},t)\nn\\
							&+\big(  {\bf P}_{\rho_{\bf n}} -\mathbb{\bf P}\big)\overline U_{\rho_{\bf n},\bn}+ \big(\mathbb{\bf P}-\A(t, \overline U_{\rho_{\bf n},\bn} )\big)\overline U_{\rho_{\bf n},\bn}.
						\end{align}
						Putting $\overline Z_{\rho_{\bf n},\bn}({\bf x},t) = \overline U_{\rho_{\bf n},\bn}({\bf x},t)-{\bf u}({\bf x},t)$,  we have
						\begin{align*}
							\frac{\partial }{\partial t} \overline Z_{\rho_{\bf n},\bn}({\bf x},t) &+{\bf P}_{\rho_{\bf n}} \overline Z_{\rho_{\bf n},\bn}({\bf x},t) \nn\\
							&=  F_{Q_{\bf n}}(\overline U_{\rho_{\bf n},\bn}({\bf x},t))-  F({\bf u}({\bf x},t))-\big({\bf P}_{\rho_{\bf n}} -\mathbb{\bf P}\big){\bf u}({\bf x},t)\nn\\
							&+\big(\mathbb{\bf P}-\A(t, \overline U_{\rho_{\bf n},\bn} )\big)\overline Z_{\rho_{\bf n},\bn}({\bf x},t)+\widehat G_{\beta_{\bf n}}({\bf x},t)-G({\bf x},t)\nn\\
							&+ \Big( \A(t,{\bf u})- \A(t, \overline U_{\rho_{\bf n},\bn} \Big){\bf u}({\bf x},t) .
						\end{align*}	
						Put $ \overline  {\bf X}_{\rho_{\bf n}, \bn}({\bf x},t)=e^{\kappa_{\bf n}(t-T)}\overline Z_{\rho_{\bf n}, \bn}({\bf x},t)$, and taking the inner product of the last equality by $\overline  {\bf X}_{\rho_{\bf n}, \bn}({\bf x},t)$, and integrating over $(t,T)$ we have
						\begin{align}
							&\Vert \overline  {\bf X}_{\rho_{\bf n}, \bn}({\bf x},T)\Vert^2_{L^2(\Omega)}- \Vert  \overline{\bf X}_{\rho_{\bf n}, \bn}({\bf x},t)\Vert^2_{L^2(\Omega)}\nn\\
							& = 2\kappa_{\bf n} \int_t^T \Vert  \overline {\bf X}_{\rho_{\bf n}, \bn}({\bf x},\tau)\Vert^2_{L^2(\Omega)} d\tau- \underbrace{2\int_t^T \int_{\Omega} {\bf P}_{\rho_{\bf n}, \bn}  \overline {\bf X}_{\rho_{\bf n}, \bn}({\bf x},\tau)  \overline{\bf X}_{\rho_{\bf n},\bn}({\bf x},\tau)d{\bf x} d\tau}_{:=\mathcal J_{4,4}}\nn\\
							&+ \underbrace{
								2 e^{\kappa_{\bf n}(t-T)} \int_t^T \int_{\Omega}  F_{Q_{\bf n}}(\overline U_{\rho_{\bf n}, \bn}({\bf x},\tau))-  F({\bf u}({\bf x},\tau)) \overline {\bf X}_{\rho_{\bf n}, \bn}({\bf x},\tau) d{\bf x} d\tau}_{:=\mathcal J_{5,5}}\nn\\
							&+\underbrace{
								2e^{\kappa_{\bf n} (t-T)}  \int_t^T \int_{\Omega} \widehat G_{\beta_{\bf n}}({\bf x},\tau)-G({\bf x},\tau)  \overline {\bf X}_{\rho_{\bf n}, \bn}({\bf x},\tau) d{\bf x} d\tau}_{:=\mathcal J_{6,6}}\nn\\
							&+ \underbrace{2e^{\kappa_{\bf n}(t-T)} \int_t^T \int_{\Omega}  \big(\mathbb{\bf P}-\A(t, \overline U_{\rho_{\bf n}, \bn} )\big)  \overline Z_{\rho_{\bf n}, \bn}({\bf x},\tau)   \overline{\bf X}_{\rho_{\bf n}}({\bf x},\tau)  d{\bf x} d\tau }_{:=\mathcal J_{7,7}}\nn\\
							&+ \underbrace{2
								e^{\kappa_{\bf n}(t-T)}  \int_t^T \int_{\Omega} \left({\bf P}_{\rho_{\bf n}}-\mathbb{\bf P}\right)u({\bf x},\tau) \overline {\bf X}_{\rho_{\bf n}, \bn}({\bf x},\tau)d{\bf x} d\tau }_{:=\mathcal J_{8,8}}\nn\\
							&+	\underbrace{2
								\int_t^T \int_{\Omega}	e^{\kappa_{\bf n}(\tau-T)}  \Big( \A(\tau,{\bf u})- \A(\tau, \overline U_{\rho_{\bf n}, \bn} \Big){\bf u}({\bf x},\tau) \overline {\bf X}_{\rho_{\bf n} , \bn}({\bf x},\tau)d{\bf x} d\tau }_{:=\mathcal J_{9,9}}. \label{esss1}
						\end{align}
						By a similar techniques  as in equation \eqref{ess2}, we obtain
						\begin{align}
							{\bf E} \Big( \mathcal J_{4,4} \Big) \ge  -2 {\overline  M } (\sqrt{\rho_{\bf n}}) \int_t^T   {\bf E}  \|\overline {\bf X}_{\rho_{\bf n}, \bn }(.,\tau)\|^2_{L^2(\Omega)} d\tau. \label{esss2}
						\end{align}
						By a similar techniques  as in equation \eqref{ess3}, we obtain
						\begin{align}
							{\bf E} \Big( \mathcal J_{5,5} \Big) \ge  -4K(Q_{\bf n}) \int_t^T   {\bf E}  \|\overline {\bf X}_{\rho_{\bf n}, \bn}(.,\tau)\|^2_{L^2(\Omega)} d\tau. \label{esss3}
						\end{align}
						By a similar techniques as in equation \eqref{ess4}, we obtain
						\begin{equation}
							{\bf E} \Big( \mathcal J_{6,6} \Big) \ge   -T {\bf E}  \Big\| \widehat G_{\beta_{\bf n}}(.)-G(.) \Big\|_{L^\infty(0,T;L^2(\Omega))}^2- \int_t^T   {\bf E}  \|\overline {\bf X}_{\rho_{\bf n}, \bn}(.,\tau)\|^2_{L^2(\Omega)} d\tau. \label{esss4}
						\end{equation}
						By a similar techniques as in  equation \eqref{ess5}, we obtain
						\begin{align}
							{\bf E} \Big( \mathcal J_{7,7}) \Big|&=	{\bf E} \left[\int_t^T  \Big\langle \left(\mathbb{\bf P}-\A(\tau, \overline U(\rho_{\bf n}, \bn))\right)  {\bf X}_{\rho_{\bf n}}(.,\tau),   {\bf X}_{\rho_{\bf n}, \bn}(.,\tau) \Big\rangle_{L^2(\Omega)} d\tau \right]\nn\\
							& \ge \widehat M {\bf E}  \int_t^T 	\|\overline {\bf X}_{\rho_{\bf n}, \bn}(.,\tau)\|^2_{{\mathcal{V}(\Omega)}}d\tau.  \label{esss5}
						\end{align}
						By a similar techniques as in equation  \eqref{ess6}, we obtain
						\begin{equation}
							{\bf E} \Big( \mathcal J_{8,8} \Big) \ge -T e^{-2T{\overline  M } (\sqrt{\rho_{\bf n}}) } \Big\| {\bf u} \Big\|^2_{L^\infty(0,T;\mathcal{G}_{1+\gamma}(\Omega)) }-\int_t^T   {\bf E}  \|\overline {\bf X}_{\rho_{\bf n}, \bn}(.,\tau)\|^2_{L^2(\Omega)} d\tau.\label{esss6}
						\end{equation}
						Now, we turn to estimate $J_{9,9}$. For $\tau \in (0,T)$, using \eqref{errorA}, we get
						\begin{eqnarray}
							\begin{aligned}
								&\int_{\Omega} 	e^{\kappa_{\bf n}(\tau-T)}  \Big( \A(\tau,{\bf u})- \A(\tau, \overline U_{\rho_{\bf n}, \bn} \Big){\bf u}({\bf x},\tau) \overline {\bf X}_{\rho_{\bf n}, \bn}({\bf x},\tau) d{\bf x} \nn\\
								& \le  	  \widetilde M \Vert {\bf u}(.,\tau)\Vert_{\mathcal{V}(\Omega)}\Vert {\bf X}_{\rho_{\bf n}, \bn}({\bf x},\tau)\Vert_{\mathcal{V}(\Omega)}\Vert {\bf X}_{\rho_{\bf n}, \bn}({\bf x},\tau)\Vert_{L^2(\Omega)} \nn\\
								& \le  		  \widetilde M \Vert {\bf u}\Vert_{L^\infty(0,T;\mathcal{V}(\Omega))}\Vert {\bf X}_{\rho_{\bf n}, \bn}({\bf x},\tau)\Vert_{\mathcal{V}(\Omega)}\Vert {\bf X}_{\rho_{\bf n}, \bn}({\bf x},\tau)\Vert_{L^2(\Omega)} \nn\\
								& \le  	\widetilde M \widetilde M_0 \Vert {\bf X}_{\rho_{\bf n}, \bn}({\bf x},\tau)\Vert_{\mathcal{V}(\Omega)}\Vert {\bf X}_{\rho_{\bf n}, \bn}({\bf x},\tau)\Vert_{L^2(\Omega)} \nn\\
								& \le \ 	\widetilde M \widetilde M_0 \left(  \frac{ \widehat M\Vert {\bf X}_{\rho_{\bf n}, \bn}({\bf x},\tau)\Vert_{\mathcal{V}(\Omega)}^2 } { 2\widetilde M \widetilde M_0  }  +\frac{2\widetilde M \widetilde M_0 \Vert {\bf X}_{\rho_{\bf n}, \bn}({\bf x},\tau)\Vert_{L^2(\Omega)}  } {\widehat M} \right)
							\end{aligned}
						\end{eqnarray}
						where $\widetilde M_0$ is a postive  constant such that $\widetilde M_0 \ge \Vert {\bf u}\Vert_{L^\infty(0,T;\mathcal{V}(\Omega))}$.
						This implies that
						\begin{align}
							{\bf E} \Big| \mathcal J_{9,9}|&= {\bf E} \left(  2
							\int_t^T \int_{\Omega}	e^{\kappa_{\bf n}(\tau-T)}  \Big( \A(\tau,{\bf u})- \A(\tau, \overline U_{\rho_{\bf n}, \bn} \Big){\bf u}({\bf x},\tau) \overline {\bf X}_{\rho_{\bf n}, \bn}({\bf x},\tau)d{\bf x} d\tau\right) \nn\\
							&\le \widehat M  {\bf E} \left(   \int_t^T 	\|\overline {\bf X}_{\rho_n, \bn}(.,\tau)\|^2_{{\mathcal{V}(\Omega)}}d\tau   \right)+ \frac{4 \widetilde M \widetilde M_0}{ \widehat M } {\bf E} \left(   \int_t^T 	\|\overline {\bf X}_{\rho_{\bf n}, \bn}(.,\tau)\|^2_{L^2(\Omega)}d\tau   \right). \label{esss7}
						\end{align}
						Combining \eqref{esss1}, \eqref{esss2}, \eqref{esss3},\eqref{esss4} \eqref{esss5}, \eqref{esss6}, \eqref{esss7}  gives
						\begin{eqnarray}
							\begin{aligned}
								&{\bf E} \Vert  \overline{\bf X}_{\rho_{\bf n}, \bn }(.,T)\Vert^2_{L^2(\Omega)}- {\bf E}  \Vert  \overline{\bf X}_{\rho_{\bf n}, \bn }(.,t)\Vert^2_{L^2(\Omega)} \nn\\
								&\quad  \ge \Big( 2\kappa_n- 2{\overline  M} (\sqrt{\rho_{\bf n}}) -4K(Q_{\bf n})-2-\frac{4 \widetilde M \widetilde M_0}{ \widehat M }  \Big)\int_t^T   {\bf E}  \|\overline{\bf X}_{\rho_{\bf n}, \bn}(.,\tau)\|^2_{L^2(\Omega)}
								\nn\\
								&\quad  - T e^{-2T{\overline  M} (\sqrt{\rho_{\bf n}}) } \Big\| {\bf u} \Big\|^2_{L^\infty(0,T;\mathcal{G}_{1+\gamma}(\Omega)) }-T {\bf E}  \Big\| \widehat G_{\bn}(.)-G(.) \Big\|_{L^\infty(0,T;L^2(\Omega)}^2.
							\end{aligned}	
						\end{eqnarray}		
						Letting $ \kappa_n={\overline  M} (\sqrt{\rho_{\bf n}})$ and by using Gronwall's inequality   as in the proof of  Theorem \ref{theorem3.1}, we conclude that
						\begin{align}
							&e^{2t{\overline  M} (\sqrt{\rho_{\bf n}})} {\bf E}	\Big\| \overline U_{\rho_{\bf n}, \bn}(.,t)-{\bf u} (.,t)\Big\|^2_{L^2(\Omega)}
							\nn\\
							&\le e^{\Big( 4K(Q_{\bf n})+2+\frac{4 \widetilde M \widetilde M_0}{ \widehat M }   \Big) (T-t)  }e^{2T{\overline  M } (\sqrt{\rho_{\bf n}})} {\bf E}	\Big\| \widehat H_{\beta_{\bf n} }-H \Big\|_{L^2(\Omega)}^2\nn\\
							&+ e^{\Big( 4K(Q_{\bf n})+2+\frac{4 \widetilde M \widetilde M_0}{ \widehat M }   \Big) (T-t)  }e^{2T{\overline  M } (\sqrt{\rho_{\bf n}})}  T {\bf E}  \Big\| \widehat G_{\beta_{\bf n}}(.)-G(.) \Big\|_{L^\infty(0,T;L^2(\Omega))}^2\nn\\
							&+e^{\Big( 4K(Q_{\bf n})+2+\frac{4 \widetilde M \widetilde M_0}{ \widehat M }   \Big) (T-t)  }   T  \Big\| {\bf u} \Big\|^2_{L^\infty(0,T;\mathcal{G}_{1+\gamma}(\Omega)) }.
						\end{align}
						Multiplying both sides of the last inequality  with  $e^{-2t{\overline  M } (\sqrt{\rho_{\bf n}})} $and combining  with the results in  Corollary \eqref{corollary2.1}, we get the desired result \eqref{ss111111}.
					\end{proof}


\begin{thebibliography}{99}
						
						\bibitem{zuazua}	N. Allahverdi, ; A. Pozo,  E. Zuazua, \emph{
							Numerical aspects of large-time optimal control of Burgers equation}
						ESAIM Math. Model. Numer. Anal. 50 (2016), no. 5, 1371–-1401.
						
			\bibitem{Ku}		B. Azmi ,  K. Kunish, \emph{	On the Stabilizability of the Burgers' Equation by receding Horizon Control},SIAM J. Control Optim., 54(3) 2016, 1378--1405.
						
							\bibitem {Cerpa} L. Baudouin,  E. Cerpa,  E. Crepeau,\emph{ Mercado, Alberto Lipschitz stability in an inverse problem for the Kuramoto-Sivashinsky equation} Appl. Anal. 92 (2013), no. 10, 2084--2102.
							
						
							
							\bibitem{JB} J. Bear,  {Dynamics of Fluids in Porous Media}, Elsevier, New York, 1972.
							
							
							\bibitem{BBC} J.V. Beck, B. Blackwell, and C.R. Clair,  \emph{Inverse Heat Conduction, Ill--Posed Problems}, Wiley--Interscience, New York, 1985.
							
							
							\bibitem{Bi2} 	 N. Bissantz,  H. Holzmann, \emph{ Asymptotics for spectral regularization estimators in statistical inverse problems} Comput. Statist. 28 (2013), no. 2, 435--453.
							
						
						\bibitem {Broad}		 P. Broadbridge,  B.H. Bradshaw-Hajek, \emph{ Exact solutions for logistic reaction-diffusion equations in biology}  Z. Angew. Math. Phys. 67 (2016), no. 4, Art. 93, 13 pp.
						
					
						
					
						
					
					
						
						\bibitem{CSH} A.S. Carasso,  J.G. Sanderson, and J.M. Hyman  \emph{Digital Removal of Random Media Image Degradations by Solving the Diffusion Equation Backwards in Time}, SIAM Journal on Numerical Analysis, Vol. 15 No. 2, 344--367, 1978.
					
						\bibitem{Cavalier} L. Cavalier, \emph{ Nonparametric statistical inverse problems}, Inverse Problems, No. 24, 034004, 19, 2008.
						
					
						\bibitem {Cerpa2}	 E. Cerpa,  A. Mercado, \emph{ Local exact controllability to the trajectories of the 1-D Kuramoto-Sivashinsky equation} J. Differential Equations 250 (2011), no. 4, 2024--2044.
		
						
						\bibitem {Cox} D. D. Cox, \emph{Approximation of method of regularization estimators,} Ann. Statist., 16 (1988),
						pp. 694--712.
					
						
						\bibitem {Engl} H. W. Engl, M. Hanke, and A. Neubauer,\emph{ Regularization of Inverse Problems,} Kluwer
						Academic, Dordrecht, Boston, London, 1996.
						
						\bibitem {Eubank} R.L. Eubank,\emph{ Nonparametric Regression and Spline Smoothing}, 2nd edn, New York: Dekker,
						1999.
						
					
						
						\bibitem{Gio}	 A. Giorgini, \emph{ On the Swift-Hohenberg equation with slow and fast dynamics: well-posedness and long-time behavior.} Commun. Pure Appl. Anal. 15 (2016), no. 1, 219--241.
						
							\bibitem {Giple} J. Gipple \emph{ The volume of n-balls}  Rose-Hulman Undergrad. Math J. 15 (2014), no. 1, 237--248.
			
			
			\bibitem {hada} J. Hadamard, {\it Lectures on the Cauchy Problem in Linear Differential Equations, } Yale University Press, New Haven, CT, 1923.
			
			
						
						\bibitem{Hao} D. N. Hao and N.V. Duc,  \emph{Stability results for backward parabolic equations with time-dependent coefficients}.  Inverse Problems 27(2011), 025003 (20pp).
						
									
									\bibitem {zuazua} A. Haraux, E. Zuazua, \emph{	Decay estimates for some semilinear damped hyperbolic problems}  Archive for Rational Mechanics and Analysis. 100 (2). 191--206. 1988.
						
						\bibitem{He} D. He, \emph{ On the $L^\infty$-norm convergence of a three-level linearly implicit finite difference method for the extended Fisher-Kolmogorov equation in both 1D and 2D.} Comput. Math. Appl. 71 (2016), no. 2594--2607.
					
						
						
						
				
						
						
					
						
						
						
						
						
							\bibitem {Hohage} C. K\"onig,  F. Werner,  T. Hohage, \emph{ Convergence rates for exponentially ill-posed inverse problems with impulsive noise.} SIAM J. Numer. Anal. 54 (2016), no. 1,  341--360.
							
						
						\bibitem{Kwa} J. Kwapisz, \emph{ Uniqueness of the stationary wave for the extended Fisher-Kolmogorov equation} J. Differential Equations 165 (2000), no. 1, 235--253.
						
					
						
						\bibitem{Lion} R. Latt\`{e}s, J.-L. Lions;
						\emph{M\'ethode de Quasi-r\'eversibilit\'e et Applications},
						Dunod, Paris, 1967.
						
						\bibitem {Lu} L. Li,  V. Sverak, \emph{ Backward uniqueness for the heat equation in cones} Comm. Partial Differential Equations 37 (2012), no. 8, 1414–-1429.
						
					
			
					
						
						
					
						
						
						
						
						
						
						
						
						
						
						
						
						
						
						
								\bibitem{Mair}	A.B. Mair, F.H. Ruymgaart, \emph{Statistical inverse estimation in Hilbert scales.} SIAM J. Appl. Math. 56 (1996), no. 5, 1424--1444
								
					
						
							\bibitem{massatt1983limiting}
							P. Massatt.
							\newblock Limiting behavior for strongly damped nonlinear wave equations.
							\newblock {\em Journal of Differential Equations}, 48(3):334--349, 1983.
							
							\bibitem{Minh} N.D. Minh, T.D. Khanh, N.H. Tuan, D.D. Trong, \emph{ A Two Dimensional Backward Heat Problem With Statistical Discrete Data }, https://arxiv.org/abs/1606.05463.
							
							
							\bibitem{Mo}	M. Mohamed ,  0. Sarah, T. Slim , \emph{Local well-posedness and global existence for the biharmonic heat equation with exponential nonlinearity}, https://arxiv.org/abs/1606.07320.
							
						
						
						\bibitem {Monobe} H. Monobe,  C.H. Wu,  \emph{On a free boundary problem for a reaction-diffusion-advection logistic model in heterogeneous environment}  J. Differential Equations 261 (2016), no. 11, 6144–-6177.
						
							
							
						
						
						
						
					
						
					
						
						
				
						
						
					
					
						\bibitem{pata2005strongly}
						V.~Pata and M.~Squassina.
						\newblock On the strongly damped wave equation.
						\newblock {\em Communications in mathematical physics}, 253(3):511--533, 2005.
						
							
						
							
						
						\bibitem {Pata} V. Pata and S.V. Zelik, \emph{ Smooth attractors for strongly damped wave equations,} Nonlinearity,
						19 (2006), 1495-1506.
						
							\bibitem {5} R. E. La Quey, P. H. Mahajan, P. H. Rutherford, W. M. Tang; \emph{Nonlinear Saturation of the
								Trapped-Ion Mode,}  Phys. Rev. Lett., 34 (1975), 391--394
							
							
						 \bibitem {Ru} A. Ruland	\emph{On the backward uniqueness property for the heat equation in two-dimensional conical domains}  Manuscripta Math. 147 (2015), no. 3-4, 415–-436.
							
								\bibitem{SK} T.H. Skaggs,  and Z.J. Kabala,  \emph{Recovering the history of a groundwater contaminant plume: Method of quasi--reversibility}, Water Resources Research, Vol. 31 No. 11, 2669--2673, 1995.
								
								\bibitem {12} T. Shlang, G. L. Sivashinsky; \emph{Irregular flow of a liquid film down a vertical column,} J. Phys.
								France, 43 (1982), 459--466
								
								\bibitem {15}	L. Song, Y. Zhang, T. Ma; \emph{Global attractor of a modified Swift-Hohenberg equation in Hk
									space,}  Nonlinear Anal., 72 (2010), 183--191.
							
								\bibitem {16} J. Swift, P. C. Hohenberg; \emph{Hydrodynamics fluctuations at the convective instability} Phys.
								Rev. A. 15 (1977), 319--328.
								
						
								\bibitem {zuazua2} L. Teresa,  E. Zuazua \emph{ Identification of the class of initial data for the insensitizing control of the heat equation} Communications in Pure and Applied analysis, 8 (1) (2009),  457--471.
								
								\bibitem{Tuan} N.H. Tuan \emph{ Stability estimates for a class of semi-linear ill-posed problems} Nonlinear Anal. Real World Appl. 14 (2013), no. 2, 1203--1215.
								
								\bibitem{Tuan2011} N.H. Tuan, P.H. Quan, \emph 	 {Some extended results on a nonlinear ill-posed heat equation and remarks on a general case of nonlinear terms}  Nonlinear Anal. Real World Appl. 12 (2011), no. 6, 2973–-2984.
								
					\bibitem{Tuan3}	N.H. Tuan, L.D. Thang, V.A, Khoa, T. Tran,\emph{
						On an inverse boundary value problem of a nonlinear elliptic equation in three dimensions}  J. Math. Anal. Appl. 426 (2015), no. 2, 1232–-1261.
						
							 \bibitem {Wu} J. Wu,  W. Wang, \emph{ On backward uniqueness for the heat operator in cones} J. Differential Equations 258 (2015), no. 1, 224–-241
							
			
				\bibitem {Yagi}
				A. Yagi,  \emph{Abstract parabolic evolution equations and their applications} Springer Monographs in Mathematics. Springer-Verlag, Berlin, 2010.
				
					\end{thebibliography}
				\end{document}